\documentclass[11pt, a4paper]{article}
\usepackage{amsmath,amsthm,amssymb,amsfonts}
\usepackage{a4wide}
\textheight 24cm
\topmargin -1.5cm

\newtheorem{theorem}{Theorem}[section]
\newtheorem{lemma}[theorem]{Lemma}
\newtheorem{proposition}[theorem]{Proposition}
\newtheorem{definition}[theorem]{Definition}

\newtheorem{corollary}[theorem]{Corollary}

\theoremstyle{remark}

\newtheorem{remark}[theorem]{Remark}

\def\Xint#1{\mathchoice
{\XXint\displaystyle\textstyle{#1}}%
{\XXint\textstyle\scriptstyle{#1}}%
{\XXint\scriptstyle\scriptscriptstyle{#1}}%
{\XXint\scriptscriptstyle\scriptscriptstyle{#1}}%
\!\int}
\def\XXint#1#2#3{{\setbox0=\hbox{$#1{#2#3}{\int}$ }
\vcenter{\hbox{$#2#3$ }}\kern-.6\wd0}}

\def\dashint{\Xint-}

\usepackage{graphicx}
\usepackage[usenames,dvipsnames]{color}
\usepackage[colorlinks=true, pdfstartview=FitV, linkcolor=black, citecolor=blue, urlcolor=blue]{hyperref}

%\numberwithin{equation}{section}
%%%% debut macro %%%%
\makeatletter

\@addtoreset{equation}{section}
\makeatother
%%%% fin macro %%%%

%\def\theproposition {{\arabic{section}.\arabic{theorem}}}
%\def\thetheorem {{\arabic{section}.\arabic{theorem}}}
%\def\thelemma {{\arabic{section}.\arabic{theorem}}}
%\def\thecorollary {{\arabic{section}.\arabic{theorem}}}
%\def\thedefinition {{\arabic{section}.\arabic{theorem}}}
%\def\theremark {{\arabic{section}.\arabic{theorem}}}
%\def\theequation {\arabic{section}.\arabic{equation}}

%\def\Examples{\medskip\noindent{\bf Examples: }}
%\def\Remark{\medskip\noindent{\bf Remark: }}
%\def\Remarks{\medskip\noindent{\bf Remarks: }}
\newcommand{\E}{{\mathbb E}}

\newcommand{\N}{{\mathbb N}}
\newcommand{\Z}{{\mathbb Z}}
\newcommand{\Q}{{\mathbb Q}}
\newcommand{\R}{{\mathbb R}}

\newcommand{\mI}{\mathcal{I}}
\newcommand{\mF}{\mathcal{F}}

\newcommand{\mE}{\mathcal{E}}
\newcommand{\mA}{\mathcal{A}}
\newcommand{\CL}{\mathrm{CL}}

\newcommand{\pa}{{\partial}}
\newcommand{\na}{{\nabla}}

\newcommand{\eps}{{\varepsilon}}

\def\div{\hbox{div \!}}

\def\sspace{\smallskip \noindent}
\def\mspace{\medskip \noindent}
\def\bspace{\bigskip\noindent}

%\numberwithin{equation}{section}

\def\Xint#1{\mathchoice
{\XXint\displaystyle\textstyle{#1}}%
{\XXint\textstyle\scriptstyle{#1}}%
{\XXint\scriptstyle\scriptscriptstyle{#1}}%
{\XXint\scriptscriptstyle\scriptscriptstyle{#1}}%
\!\int}
\def\XXint#1#2#3{{\setbox0=\hbox{$#1{#2#3}{\int}$ }
\vcenter{\hbox{$#2#3$ }}\kern-.6\wd0}}

\def\dashint{\Xint-}

%AJOUTE PAR ALEXANDRE 

\usepackage{mathtools}
\usepackage{pst-all}
\usepackage{pstricks}
\usepackage{etex}
\usepackage{float} 
\DeclareMathOperator*{\argmin}{arg\,min}
\newcommand{\mC}{\mathcal{C}}
\renewcommand{\P}{\mathcal{P}}
\newcommand{\dx}{\textbf{$ \ \mathrm{d}x$}}
\newcommand{\Ed}{\mathrm{Ed}}

\newcommand{\CC}{\mathrm{CC}}

\usepackage{tikz,pgfplots}
\usetikzlibrary{patterns,matrix,backgrounds, calc, fit,decorations.pathreplacing}
\newcommand{\diam}{\text{diam}}

\title{Homogenization of stiff inclusions\\
through network approximation} 

\author{David G\'erard-Varet, Alexandre Girodroux-Lavigne}
\begin{document}
\maketitle
\begin{abstract}
We investigate the homogenization of inclusions of infinite conductivity, randomly stationary distributed inside a homogeneous conducting medium. 
A now classical result by Zhikov shows that, under a logarithmic moment bound on the minimal distance between the inclusions, an effective model with finite homogeneous conductivity exists. Relying on ideas from network approximation, we provide a relaxed criterion ensuring homogenization. Several examples not covered by the previous theory are  discussed. 
\end{abstract}

\section{Introduction}
The classical theory of homogenization deals with conductivity matrices $A^\eps = A(\frac{x}{\eps})$ having uniform lower and upper bounds:  $\alpha \mathrm{Id} \le A \le \beta \mathrm{Id}$, with $0 < \alpha < \beta < +\infty$. 
Degenerate cases, for which $A$ is allowed to vanish or to take infinite values, still lead to open questions. A problem typical of the first situation is as follows. Given a union   $F = \cup_{I \in \mI} \, I$ of closed connected domains $I$ (the {\em inclusions}), either periodic or random stationary, one considers the following system: 
\begin{equation} \label{soft_inclusion}
\left\{
\begin{aligned}
- \Delta u^\eps  & = f \quad \text{in } \: U\setminus  F^\eps, \\
u^\eps\vert_{\pa U} & = 0, \\
\pa_\nu u^\eps\vert_{\pa F^\eps} & = 0.
\end{aligned}
\right. 
\end{equation} 
Here, $U$ is a smooth bounded domain of $\R^3$, and $F^\eps = \cup  \, \eps I $ where the union is restricted to inclusions satisfying $\eps I \subset U$. The Neumann condition corresponds to zero conductivity inside the inclusions, while conductivity is normalized to $1$ outside. The problem is to determine a homogenized limit model. Namely, one tries to understand under which conditions the  solution $u^\eps$ of \eqref{soft_inclusion} converges as $\eps \rightarrow 0$ to the solution $u^0$ of  
\begin{equation} \label{target_system}
\left\{
\begin{aligned}
- \div(A^0 \na u^0)  & = (1-\lambda) f \quad \text{in } \: U, \\
u^0\vert_{\pa U} & = 0, \\
\end{aligned}
\right.
\end{equation}
for some non-degenerate effective conductivity matrix $A^0$. The constant $\lambda$ refers to the density of the inclusions, and  is the weak limit of $1_{F^\eps}$. This homogenization problem has become standard, and is discussed extensively in the classical book \cite{book:ZKO}. In brief, homogenization holds under some mild but uniform regularity requirement on the geometry of the inclusions, and in a  general random stationary ergodic setting, as long as $\R^3\setminus F$ is connected. One strategy to show this result is through an adaptation of the classical div-curl approach. With regards to this strategy, a nice feature is that the first equation and the Neumann condition are equivalent to the single equation 
$$ \div (1_{F^\eps} \na u^\eps) = 1_{F^\eps} f \quad \text{ in U}$$
meaning that the divergence of the flux $1_{F^\eps} \na u^\eps$ is relatively compact in $H^{-1}(U)$. On the contrary, one difficulty is that $\na u^\eps$ is defined only on $U \setminus F^\eps$ and there is no canonical extension as a gradient of an $H^1$ function over the whole domain $U$. Still, construction of such a potential field extension inside the inclusions is possible under mild requirements. Let us stress that such extension is a local process, in the sense that extension in one inclusion can be considered independently from the others.  

\mspace
As regards the opposite case of inclusions with infinite conductivity, the theory of homogenization is less complete.  The analogue of \eqref{soft_inclusion} reads: 
\begin{equation} \label{main_system}
\left\{
\begin{aligned}
- \Delta u^\eps  & = f \quad \text{in } \: U\setminus  F^\eps, \\
\na u^\eps & = 0 \quad \text{in } \: F^\eps, \\
u^\eps\vert_{\pa U} & = 0, \\
\int_{\pa I^\eps} \pa_\nu u^\eps & = 0,  \quad \forall I^\eps \in \mathrm{CC}(F^\eps).
\end{aligned}
\right. 
\end{equation} 
Here, we denote $I^\eps := \eps I$, that belongs to the set $\mathrm{CC}(F^\eps)$  of connected components of $F^\eps$. On such inclusions, the condition $\na u^\eps = 0$  ensures that the potential is constant, which corresponds to infinite conductivity. The integral condition $\int_{\pa I^\eps} \pa_\nu u^\eps  = 0$ corresponds to the fact that the total flux through the boundary of the inclusion is zero (a continuous  analogue of Kirchoff's  nodal rule). Let us note that system \eqref{main_system} has an important extension to modelling of suspensions in fluid mechanics, in which the Laplace operator is replaced by the Stokes operator,  the electric field being replaced by the Newtonian stress tensor of the fluid. The analogue question of the  effective viscosity of passive suspensions has attracted a lot of attention recently, {\it cf.} \cite{Haines&Mazzucato,AGKL,MR4102716,MR4098775,GVH, GVRH,gerard2020correction,gerard2021derivation,DuerinckxGloria,duerinckx2020effective,DueGloria} among many. 

\mspace
Again, the point is to show convergence to the effective system \eqref{target_system}. However, contrary to the case of {\em soft inclusions} \eqref{soft_inclusion}, the case of {\em stiff inclusions} (in the terminology of \cite{book:ZKO})  requires more than the connectedness of $\R^3\setminus F$. Some condition on the distance between particles is needed. The main difference between \eqref{soft_inclusion} and \eqref{main_system}, responsible for extra assumptions in the latter case, can be seen when trying to adapt the div-curl approach. The difficulty is reversed. While the potential field $\na u^\eps$ has a natural extension as a potential field over $U$ (just extend $u^\eps$ by its constant value inside each inclusion), the divergence of the  flux $1_{F^\eps} \na u^\eps$ is no longer controlled in $H^{-1}(U)$. The point is then to find a nice extension inside the inclusions of a field (namely $\na u^\eps$) with given divergence (namely $f$).  This constraint on the divergence makes such extension process non-local, and implies global assumptions on the configuration of the inclusions. 

\mspace
Up to our knowledge, homogenization was so far only established by Zhikov for a random spherical structure $F = \cup_{i \in \N} B_i$,  under the assumption that almost surely, 
$$ \limsup_{N \rightarrow +\infty} \frac{1}{N^3} \sum_{B_i  \subset (-N,N)^3} \mu_{i} < + \infty, \quad \text{ where } \quad \mu_{i} := |\ln(d(B_i, F\setminus B_i))|.$$
See Theorem \ref{Zhikov} below, or \cite[chapter 8]{book:ZKO} for more.  Obviously, in the case where there is a minimal distance $\delta > 0$  between the spheres, such condition is satisfied. As will be discussed later in the paper,  the logarithmic term $\mu_i$  comes from the fact that given any smooth Dirichlet data $\varphi_i$  on $B_i$, there is an  $H^1$ function $\phi$ with $\phi = \varphi_i$ on $B_i$  and  $\varphi = 0$ on all other $B_j$'s, exploding at most  like $\mu_{i}$. This gives a glimpse on how such assumption may help to build the extension operator alluded to above.

\mspace
In the opposite very dense setting  where for all $i$, $|\mu_i| \le \mu \ll 1$,  homogenization is not always possible. Roughly, one can then find a sequence $\mu = \mu(\eps)$  and  boundary data problems of the form 
 \begin{equation} \label{boundary_data_system}
\left\{
\begin{aligned}
- \Delta u^{\eps, \mu}  & = 0 \quad \text{in } \: U\setminus  F^\eps, \\
\na u^{\eps, \mu} & = 0 \quad \text{in } \: F^\eps, \\
u^{\eps, \mu} \vert_{\pa U} & = \varphi, \\
\int_{\pa I^\eps} \pa_\nu u^{\eps, \mu} & = 0,  \quad \forall I^\eps \in \mathrm{CC}(F^\eps) 
\end{aligned}
\right. 
\end{equation} 
for which
$$  \liminf_{\eps \rightarrow 0} \int_{U} |\na u^{\eps, \mu(\eps)}|^2 = +\infty. $$
Such negative results for dense settings rely notably on the so-called  {\em network approximation method}, as described in the monograph \cite{BeKoNo}. The idea behind this method  is that when the spheres get close to one another, the analysis of systems of type \eqref{boundary_data_system}  can be simplified:  the asymptotic behaviour of the system (notably of its energy), can be deduced from  the properties of an underlying weighted graph where: 
\begin{itemize}
\item  nodes of the graph are the spheres of infinite conductivity
\item edges are pairs of spheres close to one another, in the sense that they belong to adjacent Voronoi cells
\item each edge $e = \{B_i, B_j\}$ has weight $\mu_e := |\ln d(B_i, B_j)|$.
\end{itemize}
For instance, the energy of the system can be approximated as $\mu \rightarrow 0$ by  a reduced discrete energy associated to the graph. Such energy may then be analyzed by tools of graph theory, and its divergence as $\mu \rightarrow 0$ established. We refer to \cite{BoPa,MR1857272,BeMi,berlyand2002error}, and again to the monograph \cite{BeKoNo} and its bibliography. See also \cite{BeBoPa,MR2525112,MR2319721} in the context of fluid mechanics and suspensions.  

\mspace
The goal of this paper is to investigate some intermediate situations, in which the condition on the distance between the inclusions is not necessarily satisfied, but homogenization is still possible. To identify  situations of this kind, we will rely on two notions: 
\begin{itemize}
\item the notion of {\em multigraph of inclusions}, reminiscent of the network approximation just mentioned. 
\item the notion of {\em short of inclusions},  reminiscent of the study of electrical circuits. 
\end{itemize}
Thanks to these notions, we will formulate two assumptions \eqref{H1}-\eqref{H2}. The weaker one, \eqref{H1},  allows to define the homogenized matrix $A^0$, while the stronger one, \eqref{H2}, allows for homogenization. This set of two assumptions is implied by a logarithmic moment bound on the minimal distance,  but is much more general. For instance, our homogenization theorems apply to the case of inclusions or clusters of  inclusions of arbitrairily large size, with a mere moment bound on their diameter. Further examples, like inclusions with anisotropic structure, will be discussed. 

\mspace
The outline of the paper is as follows. In Section \ref{sec1},  after a brief reminder on stationary closed sets, we describe the class of inclusions  under consideration. For such class, we define multigraphs and shorts of  inclusions. We conclude the section by the statement of our homogenization results. In Section \ref{section_preliminary}, we show certain extension properties for potential  and solenoidal vector fields, crucial to the proof of homogenization.  This proof is then given in Section \ref{sec_main_results}. Eventually, Section \ref{section_disc_assumptions}  provides complements on our main hypotheses \eqref{H1}-\eqref{H2}.

\section{Statements of the main results} \label{sec1}

\subsection{Reminder on stationary closed sets.}
 We follow here the definition in the lecture notes \cite[chapter 13]{bart}. A random closed set is  a random variable $F = F(\omega) \subset \R^3$ from a probability space  to the set 
$$\CL = \{F \subset \R^3, \: F \: \text{closed} \}$$
equipped with the borelian $\sigma$-algebra $B(\CL)$. We remind that the topology on $\CL$ is the topology induced by the sets $\CL_K = \{F, F \cap K \neq \emptyset \}$,
 $K$ describing the compact subsets of $\R^3$. Note that by considering  the law $P_F$  of $F$ on $\CL$, we can always assume that the probability space is $(\CL, B(\CL), P_F)$  and that $F(\omega) = \omega$.  This is the canonical representation of the random closed set. We now introduce the  shift $\tau_x : \CL \rightarrow \CL$, $\tau_x(\omega) = \omega - x$, $x \in \R^3$. We say that the random closed set  $F$ is  $\R^3$-stationary, resp. $\Z^3$ stationary, if $ P_F \circ \tau_x^{-1} = P_F$ for all $x \in \R^3$, resp. $x \in \Z^3$. We say that it is ergodic if under the assumption that $\tau_x(\mA) = \mA$ for all $x \in \R^3$, resp.   $\tau_x(\mA) = \mA$ for all $x \in \Z^3$,  one has $P_F(\mA) \in \{0,1\}$.

\mspace
%Note that similar definitions hold  replacing $\R^3$ by $\T^3$ above, which corresponds to periodic random closed sets. If we see such random sets as periodic subsets of $\R^3$   $F(\omega)$ as a periodic subset of $\R^3$ 
Note that there is a trick to turn a $\Z^3$-stationary ergodic random closed set into an $\R^3$-stationary ergodic random closed set. Namely, given $F$ a $\Z^3$-stationary random closed set, one defines 
$$ \tilde F : \left( \CL \times (0,1)^3 , P_F \otimes \text{Leb} \right) \rightarrow \CL , \quad (\omega,x) \rightarrow \omega - x. $$
Then, one considers on $\CL$ the measure image of $P_F \otimes \text{Leb}$ by $\tilde F$, and denoting $P_{\tilde F}$ such a measure, one can check that 
$P_{\tilde F} \circ \tau_x^{-1} = P_{\tilde F}$ for all $x \in \R^3$. Indeed, given a borelian $\mA$ of $\CL$, by Fubini's theorem, 
\begin{align*}
 P_{\tilde F} \circ \tau_x^{-1}(\mA) & = P_F \times \text{Leb} \left( \{(\omega,y), \omega-y  \in \mA+x \}\right) \\
 & = \int_{(0,1)^3}  \int_{\CL}   1_{\mA+x}(\omega-y) dP_F(\omega) dy =\int_{(0,1)^3} P_F(\tau_{-x-y}(\mA)) dy. 
 \end{align*}
By $\Z^3$-stationarity of $P_F$, the integrand is $\Z^3$-periodic, and the result follows by the change of variable $y' = y+x$. Moreover, $\tilde F$ is ergodic if $F$ is. Indeed, if  $\tau_y(\mA) = \mA$ for all $y \in \R^3$, we find (take $x=0$ in the previous identity): 
\begin{align*}
 P_{\tilde F}(\mA) &  =\int_{(0,1)^3} P_F(\tau_{-y}(\mA)) dy  = P_F(\mA)  \in \{0,1\}. 
 \end{align*}
Note that a property that is almost sure with respect to $P_{\tilde F}$ will hold for realizations of the form  $\omega - x$, that is $F(\omega) - x$, for $P_F$-almost every $\omega$ and almost every $x$.  As the choice  $x = 0$ characterizing $F$  made at the beginning is irrelevant, this gives results about the original random closed set $F$. 

\mspace
More generally, let $(F_i)_{i \in I}$ a family of random closed sets defined on the same probability space, and $P$ its joint law on $\CL^I$. We define the shift  $\tau_x : \CL^I \rightarrow \CL^I$, $\tau_x((\omega_i)_{i \in I}) = (x+\omega_i)_{i \in I}$, $x \in \R^3$.  We say that  $(F_i)_{i \in I}$  is $\R^3$-stationary, resp. $\Z^3$-stationary, if $P \circ \tau_x^{-1} = P$ for all $x \in \R^3$, resp. $x \in \Z^3$. We say that it is ergodic if under the assumption that $\tau_x(\mA) = \mA$ for all $x \in \R^3$, resp.   $\tau_x(\mA) = \mA$ for all $x \in \Z^3$,  one has $P(\mA) \in \{0,1\}$.
Of course, stationarity of the family is stronger than the stationarity of each of its elements, and is the right notion as soon as one deals with events involving intersections, unions of several random closed sets $F_i$. 

\mspace
A convenient unified description, that we adopt from now on, is the following. We consider a probability space  $(\Omega, \mA, P)$, equipped with a family of  maps  $\tau_x : \Omega \rightarrow \Omega$, $x \in \R^3$, which satisfies: 
\begin{itemize}
\item[i)] $(x,\omega) \rightarrow \tau_x(\omega)$ measurable
\item[ii)] $\forall x, y, \quad \tau_{x+y} = \tau_x \circ \tau_y$ (shift)
\item[iii)]  $\forall x, \quad P = P \circ \tau_x^{-1}$  (measure preserving)
\item[iv)]  $\big( \forall x, \: \tau_x(\mA) = \mA \big)  \: \Rightarrow \:  P(\mA) \in \{0,1\}$ (ergodicity). 
\end{itemize}
Under this description, an (ergodic) stationary closed set is then a r. v. $F : \Omega \rightarrow \CL$ s.t. 
\begin{equation} \label{def:stationaryset} 
F(\tau_x(\omega)) = F(\omega) - x, \quad \forall \omega \in \Omega, \: \forall x \in \R^3.
\end{equation}

\mspace
Introducing  the subset $\mF$ of  $\Omega$ defined by 
\begin{equation} \label{defi_rmF}
\mF = \{ \omega \in \Omega, \: 0 \in F(\omega) \} 
\end{equation}
one can notice that 
$$ F(\omega) = \{x, \tau_x(\omega) \in \mF\}. $$
This is the point of view taken in \cite{book:ZKO}.

%\mspace
%Finally, we can notice that any $\R^3$-stationary closed random set enters the definition of stationary random set given in the book by Jikov-Kozlov-Oleinik. Indeed, introducing the set $ \mA := \{ F \in \CL, \quad 0 \in \CL \} $, we find that 
%$$  F(\omega) = \omega  = \{x \in \R^3, \quad \tau_x(\omega) \in \mA \}.  $$

\subsection{Geometry of the inclusions.} Our homogenization results apply to a class of ergodic stationary closed sets $F = F(\omega)$ that we now describe. Let $F$ be a closed set  and $\mathrm{CC}(F)$ its  family of connected components, so that $F = \cup_{I \in \mathrm{CC}(F)} I$.  Each of this connected component is of course closed. We introduce the following geometric conditions:      

\mspace
\begin{itemize} 
\item[(G1)] Regularity of the inclusions : there exists $d > 0$, such that all $I \in \mathrm{CC}(F)$ is the closure of a $C^2$ bounded domain satisfying an interior and exterior ball condition with uniform  radius $d$. 
% there exists  $\varphi_I = \varphi_I(\omega) : \R^3 \rightarrow \R^3$ a $C^1$ diffeomorphism  such that  $\varphi_I(\overline{B(0,1)}) = I$, with  $ \|\na \varphi_I\|_\infty + \|\na %\varphi_I^{-1}\|_\infty \le D$.  
 
  \item[(G2)] Geometry of the {\em gaps} : there exists $\delta, a > 0$, such that   for all  $I \in \mathrm{CC}(F)$, the set 
  $$I \cap \{d(x, F\setminus I) \le  \delta \}$$
   has a finite number of connected components $I_\alpha$, $\alpha = 1, \dots, N_I$,  with $\sup_I N_I < +\infty$,  and with at most one couple $(J, \beta) \neq (I, \alpha)$  such that $d(I_\alpha, J_\beta) \le 2\delta$. Moreover, for an appropriate local system of cylindrical coordinates $(r,\theta,z)$:  
\begin{equation} \label{paraboloid}
I_\alpha  \subset \{ z \ge d(I_\alpha,I_\beta)/2 +  a r^2  \}, \quad  I_\beta  \subset \{ z \le - d(I_\alpha,I_\beta)/2 - a r^2  \},
\end{equation}
that is $I_\alpha$ and $I_\beta$ are separated by paraboloids with uniform curvature. 
 \end{itemize}
 
 \begin{remark}
By  \eqref{paraboloid}, there is a unique couple of points $(x_{I,\alpha}, x_{J,\beta})  \in I_\alpha   \times I_\beta$ such that 
\begin{equation} \label{xialpha}
|x_{I,\alpha} - x_{J,\beta}| =  d(I_\alpha, J_\beta). 
\end{equation}
 See Figure \ref{Geometry}. 

\mspace
\definecolor{gray1}{gray}{0.85}
\definecolor{gray2}{gray}{0.67}
\definecolor{gray3}{gray}{0.55}

\begin{figure}[h!]
    \centering
    \psset{unit=1.3cm}
\begin{pspicture}(4.3,-1.4)(3,3)

\begin{psclip}{\pscustom[linewidth=2pt,fillstyle=solid,
fillcolor=gray]{
\pscurve[showpoints=false,linewidth=1.5pt,linecolor=black] (-0.05,-0.15) (0.15,-0.6) (-0.5,-1) (-1,-1.5) (-1.3,-0.75) (-0.6,0.1) (-0.05,-0.15)
}}
\pscircle[fillstyle=solid,fillcolor=gray1](0,-0.05){0.25}
\end{psclip}

\begin{psclip}{\pscustom[linewidth=2pt,fillstyle=solid,
fillcolor=gray]{
\pscurve[showpoints=false,linewidth=1.5pt,linecolor=black] (-0.3,1.4) (0,2) (0.6,1.2) (1.1,0.7) (1.5,0.4) (2,0) (1.9,-0.5) (1.5,-0.6) (0,0) (-0.3,1.4)
}}
\pscircle[fillstyle=solid,fillcolor=gray1](0,-0.05){0.25}
\pscircle[fillstyle=solid,fillcolor=gray1](0,2.05){0.30}
\pscircle[fillstyle=solid,fillcolor=gray1](2.05,0){0.30}
\end{psclip}

\begin{psclip}{\pscustom[linewidth=2pt,fillstyle=solid,
fillcolor=gray]{
\pscurve[showpoints=false,linewidth=1.5pt,linecolor=black] (0,2.1)   (2.5,1.7) (2.1,0) (2.2,-0.1) (2.3,-0.1) (3.2,3) (-0.2,2.4) (-0.3,2.2) (0,2.1))}}
\pscircle[fillstyle=solid,fillcolor=gray1](0,2.05){0.30}
\pscircle[fillstyle=solid,fillcolor=gray1](2.05,0){0.30}
\end{psclip}
\psframe[linewidth=1.5pt,linecolor=black,linestyle=dashed] (1.6,-0.4)(2.45,0.40)

\pspolygon[fillstyle=solid,linewidth=0,
fillcolor=gray] (6.96,2.98) (8.48,2.98) (8.48,1.26)

\begin{psclip}{\pscustom[linewidth=2pt,fillstyle=solid,
fillcolor=gray]{
\pscurve[showpoints=false,linewidth=1.5pt,linecolor=black] (7,3) (6.6,1.2) (8.5,1.3)
}}
\pscircle[fillstyle=solid,fillcolor=gray1](6.5,1){1}
\end{psclip}

\pspolygon[fillstyle=solid,linewidth=0,
fillcolor=gray] (4.52,2.54) (6.03,-0.98) (4.52,-0.98)

\begin{psclip}{\pscustom[linewidth=2pt,fillstyle=solid,
fillcolor=gray]{
\pscurve[showpoints=false,linewidth=1.5pt,linecolor=black] (4.5,2.5) (6.3,0.9) (6,-1)
}}
\pscircle[fillstyle=solid,fillcolor=gray1](6.5,1){1}
\end{psclip}

\rput(6.26,1.02){$\bullet$}
\rput(6.1,0.83){\small $x_{I,\alpha}$}

\rput(6.6,1.20){$\bullet$}
\rput(7,1.15){\small $x_{J,\beta}$}

\psframe[linewidth=1.5pt,linecolor=black,linestyle=dashed] (4.5,-1)(8.5,3)
\psline[linewidth=1pt,linecolor=black,linestyle=dashed]  (2.45,-0.4) (4.5,-0.96)
\psline[linewidth=1pt,linecolor=black,linestyle=dashed]  (2.45,0.40) (4.5,2.96)
\rput(0.5,0.5){$I$}
\rput(6.6,0.3){$I_\alpha$}
\rput(6.3,1.7){$I_\beta$}
\rput(5,-0.5){$I$}
\rput(8,1.5){$J$}
\end{pspicture}
    \caption{Geometry of the inclusions with a close-up on a gap.}
    \label{Geometry}
\end{figure}
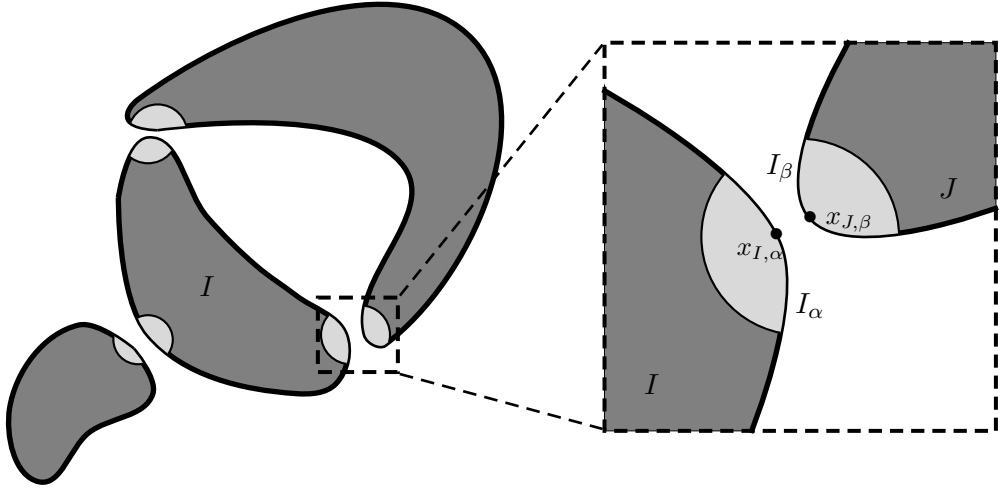
\end{remark}
\begin{definition} {\bf (Admissible set of inclusions) } \label{rem_G1G2}

\sspace
We say that an ergodic stationary closed set  $F = F(\omega)$ is an admissible set of inclusions if  it satisfies (G1)-(G2)  almost surely, with $d,\delta,a$, and $\sup_I N_I$ bounded by a deterministic constant. 
 \end{definition}
\begin{remark}
At this stage, we do not assume anything on the diameter of the inclusions. 
\end{remark}

\subsection{Multigraph of inclusions}
We now associate to a closed set satisfying (G1)-(G2)  an unoriented  multigraph  that we  call  multigraph of  inclusions. Roughly, the nodes of the multigraph are the connected components of the closed set, and we link pieces of these connected components that are $\delta$-close. See Figure \ref{Multigraph} for an illustration. In a more formal way:  
\begin{definition} \label{defi:multigraph} {\bf (Multigraph of  inclusions)}

\sspace
Let $F$ a closed set satisfying (G1)-(G2). For $\delta$ the constant in (G2),  the $\delta$-multigraph of  inclusions associated to $F$, called multigraph of inclusions for brevity, is the unoriented multigraph $Gr(F) = (\mathrm{CC}(F),\mathrm{Ed}(F))$ with set of nodes $\mathrm{CC}(F)$ and set of edges $\mathrm{Ed}(F)$  made of elements of the form  
 $$ e = [x_{I,\alpha}, x_{J,\beta}], \quad \text{ with } \:  I \neq J \in \mathrm{CC}(F), \quad  1 \le \alpha \le N_I, \quad 1 \le \beta \le N_J, \quad d(I_\alpha, J_\beta)  \le \delta.$$
\end{definition}
\noindent
 We say that  an edge $e$ as above connects $I$ to $J$, and  denote it  $I \overset{e}{\leftrightarrow} J$. If there exists $e \in \mathrm{Ed}(F)$ such that $I \overset{e}{\leftrightarrow} J$, we simply note $I \leftrightarrow J$.  Note that several edges can connect the same pair of nodes, hence the multigraph structure. This corresponds to multiple gaps for a given pair of inclusions.   To each of the edges, we associate a weight $\mu_e$, by the formula 
 \begin{equation} \label{weight}
  \mu_e := \ln |e|  = \ln |d(I_\alpha, J_\beta)| \quad \text{ for } \quad e = [x_{I,\alpha}, x_{J,\beta}].
  \end{equation}
 %and define from there the total weight of a pair of vertices:
%$$ \mu_{I,J} = \sum_{e, \: I \overset{e}{\leftrightarrow} J} \mu_e. $$
As explained in the introduction, there is a strong analogy between our multigraph of inclusions and the network approximation of \cite{BeKoNo}.
% \begin{remark} 
%Actually, we can associate  a $\delta$-multigraph $G_F$  to any closed set $F$ and any $\delta > 0$. Namely, let  $F$ be closed,  $CC$  the set  of its  connected components, and  $\delta > 0$. For any $I \in CC$, we introduce $\mA_I$ the set of the connected components of $I \cap B(F\setminus I, \delta)$. Still  denoting  $I_\alpha$, $\alpha \in \mA_I$ the elements of this set,  the definition  $G_F = (CC,E_CC)$ above still makes sense, with a possibly infinite set of edges for a given pair of vertices. 
% \end{remark}

\mspace
For later use, we further define 
\begin{definition} \label{defi:cluster} {\bf (Cluster of  inclusions)}

\sspace
Let $F$ satisfying (G1)-(G2). A  {\em cluster} of $F$ is a union of all the inclusions  that are the nodes of a connected component of the 
$\delta$-multigraph $Gr(F)$ (not to be confused with a connected component of $F$ itself, which corresponds to a single inclusion/node).   
\end{definition}

\begin{remark} \label{rem_I0_C0}
 For $F$ a set of inclusions, and $y \in \R^3$, we shall denote: 
 \begin{itemize}
 \item  $I_{y,F} \in  \mathrm{CC}(F)$ the inclusion containing $y$, with the convention that $I_{y,F} = \emptyset$ if $y \not\in F$. 
 \item $C_{y,F} \in  \mathrm{CC}(F)$ the cluster containing $y$, with the convention that $C_{y,F} = \emptyset$ if $y \not\in F$. 
 \end{itemize}
 In the case $F = F(\omega)$ is stationary, one has clearly 
 $$ I_{y,F(\omega)} = I_{0,F(\tau_y(\omega))}, \quad C_{y,F(\omega)} = C_{0,F(\tau_y(\omega))}. $$
 This implies that $(y,\omega) \mapsto \diam(I_{y,F(\omega)})$,  $(y,\omega) \mapsto \diam(C_{y,F(\omega)})$, $(y,\omega) \mapsto \sharp(C_{y,F(\omega)})$ are stationary random fields.  We will apply the ergodic theorem several times  to these fields. 
\end{remark}

\definecolor{gray4}{gray}{0.90}

\begin{figure}[h!]
    \centering
    \psset{unit=1.3cm}
\begin{pspicture}(4.3,-0.9)(3,3)
\begin{psclip}{\psframe[fillstyle=solid,linewidth=2pt,linecolor=black,fillcolor=gray4] (0,-1)(7.5,3)}
\pscircle[fillstyle=solid,fillcolor=gray,linecolor=black,linewidth=1.5pt](0,0){0.7}
\pscircle[fillstyle=solid,fillcolor=gray,linecolor=black,linewidth=1.5pt](1.15,0.7){0.6}
\psline[linewidth=8pt,linecolor=white] (0,0) (1.15,0.7)
\pscircle[fillstyle=solid,fillcolor=gray,linecolor=black,linewidth=1.5pt](0.55,1.9){0.7}
\psline[linewidth=8pt,linecolor=white] (1.15,0.7) (0.55,1.9)
\psline[linewidth=8pt,linecolor=white] (-2,1.9) (0.55,1.9)
\pscircle[fillstyle=solid,fillcolor=gray,linecolor=black,linewidth=1.5pt](1,3.2){0.6}
\psline[linewidth=8pt,linecolor=white]  (0.55,1.9) (1,3.2)
\pscircle[fillstyle=solid,fillcolor=gray,linecolor=black,linewidth=1.5pt](2.6,0.5){0.8}
\psline[linewidth=8pt,linecolor=white]  (2.6,0.5) (1.15,0.7)
\pscircle[fillstyle=solid,fillcolor=gray,linecolor=black,linewidth=1.5pt](2.8,1.95){0.6}
\psline[linewidth=8pt,linecolor=white]  (2.6,0.5) (2.8,1.95)
\pscircle[fillstyle=solid,fillcolor=gray,linecolor=black,linewidth=1.5pt](2.4,3.1){0.5}
\psline[linewidth=8pt,linecolor=white] (2.4,3.1) (2.8,1.95)
\pscircle[fillstyle=solid,fillcolor=gray,linecolor=black,linewidth=1.5pt](3.2,-0.85){0.6}
\psline[linewidth=8pt,linecolor=white] (2.6,0.5) (3.2,-0.85)
\psline[linewidth=8pt,linecolor=white]  (3.2,-0.85) (3.2,-1)
\pscircle[fillstyle=solid,fillcolor=gray,linecolor=black,linewidth=1.5pt](4.20,0.2){0.7}
\psline[linewidth=8pt,linecolor=white]  (4.20,0.2) (3.2,-0.85)
\psline[linewidth=8pt,linecolor=white]  (4.20,0.2) (2.6,0.5)
\pscircle[fillstyle=solid,fillcolor=gray,linecolor=black,linewidth=1.5pt](4.15,1.65){0.7}
\psline[linewidth=8pt,linecolor=white]  (4.15,1.65) (4.20,0.2)
\psline[linewidth=8pt,linecolor=white]  (4.15,1.65)  (2.8,1.95)
\pscircle[fillstyle=solid,fillcolor=gray,linecolor=black,linewidth=1.5pt](5.71,0.95){0.9}
\psline[linewidth=8pt,linecolor=white]  (5.71,0.95)  (4.15,1.65)
\psline[linewidth=8pt,linecolor=white]  (5.71,0.95)  (4.20,0.2)
\pscircle[fillstyle=solid,fillcolor=gray,linecolor=black,linewidth=1.5pt](5.25,2.45){0.6}
\psline[linewidth=8pt,linecolor=white]  (5.25,2.45)  (5.71,0.95)
\psline[linewidth=8pt,linecolor=white]  (5.25,2.45)  (4.15,1.65)
\psline[linewidth=8pt,linecolor=white]  (5.25,2.45)  (5.65,4)
\pscircle[fillstyle=solid,fillcolor=gray,linecolor=black,linewidth=1.5pt](5.9,-1){0.65} %Particule sans voisins
\pscircle[fillstyle=solid,fillcolor=gray,linecolor=black,linewidth=1.5pt](7.2,0.5){0.6} 
\psline[linewidth=8pt,linecolor=white]  (7.2,0.5) (5.71,0.95)
\psline[linewidth=8pt,linecolor=white]  (7.2,0.5) (8.8,0)
\pscircle[fillstyle=solid,fillcolor=gray,linecolor=black,linewidth=1.5pt](6.52,2.3){0.6} 
\psline[linewidth=8pt,linecolor=white]  (6.52,2.3) (5.25,2.45)
\psline[linewidth=8pt,linecolor=white]  (6.52,2.3) (5.71,0.95)
\pscircle[fillstyle=solid,fillcolor=gray,linecolor=black,linewidth=1.5pt](7.55,2.9){0.5}
\psline[linewidth=8pt,linecolor=white]  (7.55,2.9) (6.52,2.3)
\end{psclip}
\psframe[linewidth=1.4pt,linecolor=black] (0,-1)(7.5,3)
\end{pspicture}
    \caption{Spherical set up. The graph obtained with the whites lines is isomorphic to the multigraph of inclusions.}
    \label{Multigraph}
\end{figure}
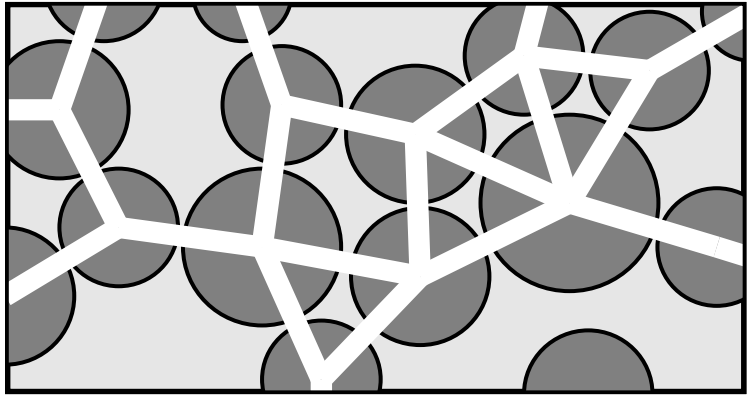

\subsection{Short of inclusions}
A last notion we need to explain before stating  our main results is the notion of short of inclusions. It is directly inspired from the study of electrical networks and their associated multigraphs:  

\begin{definition} {(\bf Short of a multigraph)}

\sspace
Let $G = (V,E)$ a multigraph.  Let $I,J \in V$. The short of $G$ at $\{I,J\}$  is the multigraph obtained by identifying nodes $I$ and $J$ (and suppressing all edges joining $I$ and $J$).  
 More generally, given a set  of pairs of nodes $S$, the short of $G$  at $S$ is the multigraph obtained from $G$ by identifying all pairs in $S$ (and suppressing all edges joining these nodes): 
 
 \sspace
 We say that $G'$ is a short of $G$ if there exists  a set $S$  of pairs of nodes  such that $G'$ is the short of $G$ at $S$. 
 \end{definition}

\mspace
For electrical circuits,   it is well-known that nodes with the same potential can be shorted, without changing the values of the currents through the remaining edges. This is a useful fact, as one can short the multigraph of an electrical circuit to simplify calculations.  In other words, shorting two nodes is the same as imposing the same potential on each of the node, which  can be interpreted as having zero resistance between those nodes.  We refer to \cite[chapter 2]{Bollobas} for more. 

\mspace
At the level of the inclusions, an analogue of a short between two nodes  consists in bridging the gap between two inclusions, so as to obtain one single connected component out of the two, with a single potential. 
We introduce the following 
\begin{definition} {(\bf Short of inclusions)} \label{defi:short_inclusions}

\sspace
Let $F$ a closed set  satisfying (G1)-(G2) and  $Gr(F)$ its multigraph of inclusions, see Definition \ref{defi:multigraph}.  We say that a closed set $F' \supset F$ is a short of $F$ if 
\begin{itemize}
\item $F'$ satisfies (G1)-(G2) (with some  constants $d', \delta', a', N'_{I'}$ instead of $d, \delta, a,N_{I}$)
\item $\mathrm{Ed}(F') \subset \mathrm{Ed}(F)$ 
\item there exists $\eta > 0$, such that  for all  $e \in \mathrm{Ed}(F) \setminus  \mathrm{Ed}(F')$, one has 
$$\{ d(x,e) < \eta\} \subset  F' \subset F \cup \cup_{e \in \mathrm{Ed}(F) \setminus  \mathrm{Ed}(F')} \{ d(x,e) < 2 \eta\}.$$
\end{itemize}

\sspace
Furthermore, if $F$ is an admissible set of inclusions, see Definition  \ref{rem_G1G2}, $F'$ is an admissible short  of $F$ if it is itself an admissible set, and a short of $F$ almost surely for some deterministic $\eta$. \end{definition}
 \noindent
As explained above, a short of $F$ is obtained by filling some gaps of $F$. See Figure  \ref{fig:Short of a multigraph}.  If $F'$ is a short of $F$, $Gr(F')$ is isomorphic to a short of $Gr(F)$. 
\definecolor{gray1}{gray}{0.85}
\definecolor{gray2}{gray}{0.67}
\definecolor{gray3}{gray}{0.55}
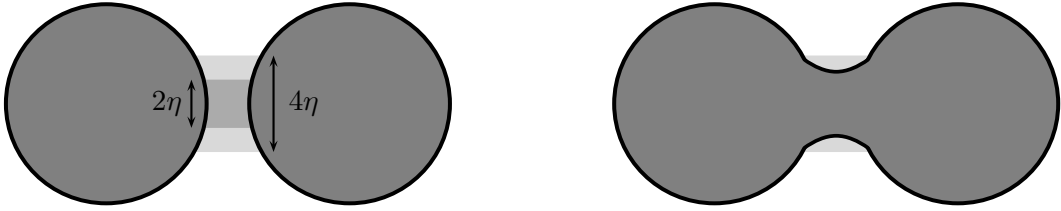
\begin{figure}[h!]
    \centering
    \psset{unit=0.4cm}
\begin{pspicture}(-4,-4)(-17,4)

\psframe*[linecolor=gray1](-4,-1.6) (4,1.6)
\psccurve[fillstyle=solid,fillcolor=gray,linewidth=2.8pt,linecolor=black](-4,-3) (0,-1) (4,-3) (4,3) (0,1) (-4,3) (-4,-3)

\pscircle[fillstyle=solid,fillcolor=gray,linewidth=1.5pt,linecolor=black](-4,0){3.3}

\pscircle[fillstyle=solid,fillcolor=gray,linewidth=1.5pt,linecolor=black](4,0){3.3}
\psccurve*[linecolor=gray](-4,-3) (0,-1) (4,-3) (4,3) (0,1) (-4,3) (-4,-3)

\psframe*[linecolor=gray1](-24,-1.6) (-16,1.6)
\psframe*[linecolor=gray2](-24,-0.8) (-16,0.8)
\pscircle[fillstyle=solid,fillcolor=gray,linewidth=1.5pt,linecolor=black](-24,0){3.3}
\pscircle[fillstyle=solid,fillcolor=gray,linewidth=1.5pt,linecolor=black](-16,0){3.3}

\psline{<->}(-18.5,-1.6) (-18.5,1.6)
\psline{<->}(-21.2,-0.8) (-21.2,0.8)
\rput(-22,0){$2\eta$}
\rput(-17.5,0){$4\eta$}

\end{pspicture}
    \caption{Two inclusions configuration, shorted on the right.}
    \label{fig:Short of a multigraph}
\end{figure}

%\begin{remark} \label{rem_short}
%For $F$ a given stationary closed set, another stationary closed set $F'$   will be said to be a short of $F$ if it is a short almost surely, with {\em deterministic constants} (that is for deterministic $D',d',\delta',a'$ in (G1)-(G2)  and deterministic $\eta$. 
%\end{remark}

\subsection{Homogenization results}
We now turn to the homogenization problem described roughly in the introduction. For the rest of this section, we assume that $F = F(\omega) = \cup_{I \in \mathrm{CC}(F(\omega))} I(\omega)$ is an admissible set of inclusions, see Definition \ref{rem_G1G2}.  Given a parameter $\eps > 0$, $\delta_0 > 0$  and a bounded domain $U$,  we denote 
\begin{equation} \label{defi:rescaled} 
 \quad I^\eps := \eps I \quad  \forall I \in \mathrm{CC}(F),     \quad  F^\eps := \bigcup_{\substack{I^\eps \subset U, \\ d(I^\eps, \pa U) > \eps \delta_0}} I^\eps . 
 \end{equation}
Let $f \in L^{6/5}(U)$,  and $u^\eps = u^\eps(\omega) \in H^1_0(U)$ the solution of \eqref{main_system}. Existence and uniqueness of $u^\eps$ is standard, and comes with the estimate 
$$ \| \na u^{\eps} \|_{L^2(U)} \le C \|f\|_{L^{6/5}(U)}.$$
In particular, $u^\eps$ has a subsequence that  weakly converges in $H^1_0(U)$. The point is to identify the limit $u^0$, and to determine under which conditions it satisfies a system like \eqref{target_system}, where $\lambda = \E 1_F$ is the average density of the set of inclusions.  

\mspace
As mentioned in the introduction, Zhikov has tackled the homogenization problem, for a collection of random spheres of unit radius, under a logarithmic moment bound  on  the minimal  distance between the spheres. The result extends easily to admissible sets of inclusions, under a uniform bound on their diameter. It can further  be expressed in terms of  multigraphs of inclusions. Namely, for all $N > 0$, let
\begin{equation} \label{defi:FN}
Q_N := (-N,N)^3, \quad    F_N := \bigcup_{\substack{I \in \mathrm{CC}(F), \\ I \subset Q_N}} I .
\end{equation}
Then  the homogenization theorem of Zhikov reads 
\begin{theorem} {\bf (Zhikov, \cite{book:ZKO})} \label{Zhikov} 

\sspace
Let $F$ an admissible set of inclusions satisfying almost surely: $\sup_{I \in \mathrm{CC}(F)} \:  \diam(I) < +\infty$, and  such that
\begin{equation} \label{assumption_Jikov_type}
\limsup_{N \rightarrow +\infty} \frac{1}{|Q_N|}  \sum_{e \in \mathrm{Ed}(F_N)}   \mu_e < +\infty 
\end{equation}
Then, almost surely,  the whole sequence $u^\eps$ converges weakly in $H^1_0(U)$ to the solution $u^0$ of a system of type \eqref{target_system}.  
\end{theorem}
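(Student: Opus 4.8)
The plan is to run the div-curl (oscillating test function) method of Murat--Tartar in the stationary ergodic setting; the two non-routine inputs are a corrector adapted to the stiff constraint and an extension of the flux with controlled divergence, and the logarithmic moment bound \eqref{assumption_Jikov_type} is exactly what makes both of these quantitative. \emph{Compactness.} The a priori bound $\|\na u^\eps\|_{L^2(U)}\le C\|f\|_{L^{6/5}(U)}$ gives, for $P$-a.e.\ $\omega$ and along a subsequence, $u^\eps\rightharpoonup u^0$ in $H^1_0(U)$. Using $\na u^\eps=0$ on $F^\eps$ together with $\int_{\pa I^\eps}\pa_\nu u^\eps=0$, an integration by parts recasts \eqref{main_system} as the single weak identity $\int_U\na u^\eps\cdot\na v=\int_U 1_{U\setminus F^\eps}\,f\,v$, valid for all $v\in H^1_0(U)$ constant on each $I^\eps\in\mathrm{CC}(F^\eps)$; the vanishing net flux of $\na u^\eps$ through each $\pa I^\eps$ is moreover the compatibility condition needed below for a solenoidal extension. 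It suffices to produce a deterministic symmetric $A^0>0$ with $-\div(A^0\na u^0)=(1-\lambda)f$: by uniqueness for \eqref{target_system} the convergence then holds for the whole sequence and the limit is deterministic.

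\emph{Correctors and the effective matrix.} For $\xi\in\R^3$ I would introduce the corrector problem of minimising $\E\big[1_{\Omega\setminus\mF}\,|\xi+g|^2\big]$ over potential stationary fields $g$ subject to $\xi+g=0$ on $\mF=\{\omega:0\in F(\omega)\}$ (the associated potential being locally constant on $F$). Nonemptiness of the admissible class uses connectedness of the complement phase; its nonemptiness \emph{with finite energy density} is where \eqref{assumption_Jikov_type} enters, and it is the content of the potential-field extension of Section \ref{section_preliminary}: one builds a competitor by patching, gap by gap of the multigraph $Gr(F)$, local capacitary fields that drive $\xi$ down to $0$ across the $\delta$-neighbourhood of each inclusion at Dirichlet cost $\lesssim|\mu_e|\,|\xi|^2$ --- here (G1)--(G2), and especially the paraboloid separation \eqref{paraboloid}, give this \emph{sharp logarithmic} bound uniformly, while $\sup_I\diam(I)<\infty$ keeps the remaining contribution $O(1)$ per inclusion --- so that summation over $Q_N$, the bound \eqref{assumption_Jikov_type}, and the ergodic theorem produce a competitor in $L^2(\Omega)$. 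The minimiser $\theta_\xi=\xi+\na_\omega\phi_\xi$ then exists, and its Euler--Lagrange relation produces, in the usual way, a stationary divergence-free flux $a_\xi\in L^2(\Omega)$ that agrees with $\theta_\xi$ off $\mF$; I set $A^0\xi:=\E a_\xi$. Orthogonality in the Weyl decomposition gives $A^0={}^tA^0$ with $A^0\xi\cdot\xi=\E[1_{\Omega\setminus\mF}|\theta_\xi|^2]$, which is $>0$ for $\xi\neq0$ by connectedness of the complement.

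\emph{Passage to the limit.} After rescaling, $w_\xi^\eps(x):=\xi\cdot x+\eps\phi_\xi(\tau_{x/\eps}\omega)$ satisfies $w_\xi^\eps\to\xi\cdot x$ in $L^2_{\mathrm{loc}}$, $\na w_\xi^\eps\rightharpoonup\xi$, $\na w_\xi^\eps=0$ on $F^\eps$, and $a_\xi^\eps(x):=a_\xi(\tau_{x/\eps}\omega)$ is divergence-free in $U$ with $a_\xi^\eps\rightharpoonup A^0\xi$, all by the ergodic theorem. The div-curl lemma for the pair $(a_\xi^\eps,\na u^\eps)$ --- using $a_\xi^\eps=\na w_\xi^\eps$ outside $F^\eps$ and $\na w_\xi^\eps=0$ inside --- gives $\na u^\eps\cdot\na w_\xi^\eps\rightharpoonup A^0\xi\cdot\na u^0$ in $\mathcal D'(U)$ for every $\xi\in\R^3$. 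Separately, the solenoidal-field extension of Section \ref{section_preliminary}, whose $L^2$-cost is again controlled by \eqref{assumption_Jikov_type}, turns $\na u^\eps$ into a field $\widehat\sigma^\eps$ bounded in $L^2(U)$, equal to $\na u^\eps$ outside $F^\eps$, with $\div\widehat\sigma^\eps=-1_{U\setminus F^\eps}f\rightharpoonup-(1-\lambda)f$. Testing $\int_U\widehat\sigma^\eps\cdot\na\psi^\eps=\int_U 1_{U\setminus F^\eps}f\,\psi^\eps$ against a two-scale test function $\psi^\eps$ built from $\psi\in C_c^\infty(U)$ and the correctors $\phi_\xi$, and using $\widehat\sigma^\eps\cdot\na w_\xi^\eps=\na u^\eps\cdot\na w_\xi^\eps$ with the weak limit just obtained, yields $\int_U A^0\na u^0\cdot\na\psi=(1-\lambda)\int_U f\psi$, i.e.\ \eqref{target_system}.

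\emph{Main obstacle.} The substance of the proof is in the two extension statements of Section \ref{section_preliminary} and their use here. One must show that the ``capacity'' of an individual gap is of order $|\mu_e|$ (hence logarithmically large for small gaps), not a power of the gap width --- which forces a genuine use of the paraboloid geometry in (G2) --- and one must realise the solenoidal extension near each inclusion at an $L^2$-cost of the same logarithmic order, \emph{uniformly in the realisation}, so that the ergodic theorem can turn the almost sure bound \eqref{assumption_Jikov_type} into a deterministic $L^2(\Omega)$ estimate; the sublinearity of the corrector and the bookkeeping over clusters (stationarity of $\diam$ and of $\sharp$ of clusters, Remark \ref{rem_I0_C0}) are further technical points of the same nature. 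Everything else is the standard homogenization machinery.
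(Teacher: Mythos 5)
Your overall architecture (div--curl method, a potential-field extension across each gap at capacity cost $O(|\mu_e|)$, a divergence-preserving extension of the flux, and the ergodic theorem to convert \eqref{assumption_Jikov_type} into deterministic bounds) is the same as the one the paper uses for its own Theorem \ref{thm_homog}, of which the present statement is the special case $F'=F$ via Lemma \ref{lemma_logH2}. However there is a concrete gap in your corrector step. You define $A^0$ through the \emph{primal} problem, minimizing $\E\,[\,1_{\Omega\setminus\mF}|\xi+g|^2\,]$ over potential stationary $g$ with $\xi+g=0$ on $\mF$ (note in passing that you must also impose $\E\, g=0$: constants belong to $L^2_{pot}(\Omega)$, so otherwise $g\equiv-\xi$ is admissible and the infimum is trivially $0$; with the mean-zero constraint, positivity of the form is then automatic and does not use connectedness). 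You then assert that the Euler--Lagrange relation produces ``in the usual way'' a stationary divergence-free field $a_\xi$ equal to $\theta_\xi$ off $\mF$. In the non-degenerate setting this is indeed routine, because the admissible variations span all of $L^2_{pot}(\Omega)$, whose orthogonal complement is exactly $\mathcal{V}^2_{sol}(\Omega)$. Here the variations are only the potential fields vanishing on $\mF$, so Euler--Lagrange only gives that $\theta_\xi$ lies in the \emph{closure} of $\mathcal{V}^2_{sol}(\Omega)+L^2(\mF)$, where $L^2(\mF)$ denotes the fields supported in $\mF$; to extract $a_\xi$ you need $\theta_\xi$ to lie in the algebraic sum, i.e.\ you need to extend the corrector flux inside the inclusions as a stationary solenoidal field. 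That is precisely the non-local stiff-inclusion difficulty emphasized in the introduction, it is equivalent to the absence of a duality gap between your primal problem and Zhikov's resistance problem, and it is not a formal consequence of Euler--Lagrange.

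This is exactly why Zhikov (and the paper) define the effective matrix through the dual problem: $B^0\xi\cdot\xi=\inf_{Z\in\mathcal{V}^2_{sol}(\Omega)}\E\int_{Q_1\setminus F}|\xi+Z|^2$ and $A^0=(B^0)^{-1}$. There the Euler--Lagrange equation is taken over the full solenoidal space, so it immediately yields that $1_{\Omega\setminus\mF}(\xi+Z)$ is a potential field --- extending a gradient by zero inside the inclusions being the trivial direction of extension; the log bound enters only to prove $B^0>0$ via the potential extension (Proposition \ref{prop_extension_H1} together with \cite[Lemma 8.7]{book:ZKO}), and the divergence-preserving extension is invoked solely at the $\eps$-level for $\na u^\eps$ (Proposition \ref{prop_div_extensions}), where it is actually proved. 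To repair your argument, either adopt this dual definition and run the div--curl scheme with the potential test field $\xi+\bar z(x/\eps)$ as in Section \ref{sec_main_results} (your remaining steps then go through, up to the boundary-layer set $(\eps F\cap U)\setminus F^\eps$ on which $a^\eps_\xi\cdot\na u^\eps$ and $\na w^\eps_\xi\cdot\na u^\eps$ differ, handled in the paper by the $o(1)$ bound on diameters of inclusions meeting a neighborhood of $\pa U$), or supply a proof of the stationary solenoidal extension of the corrector flux, which would require a probability-space analogue of Proposition \ref{prop_div_extensions} and is not contained in what you wrote.
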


\begin{remark} \label{rem_Antoine}
As pointed out to us by A. Gloria, it can be seen from  the ergodic theorem that: 
$\text{ $\sup_{I \in \mathrm{CC}(F)} \:  \diam(I) < +\infty$ a.s. $\Rightarrow$   $\sup_{I \in \mathrm{CC}(F)} \:  \diam(I) \le D$ a.s. for some deterministic  $D$.}$
\end{remark}

\noindent
As usual in stochastic  homogenization, the effective conductivity matrix $A^0$ can be expressed in terms of a variational problem in probability. Following  \cite[chapter 8]{book:ZKO}, we first introduce the effective resistance matrix $B^0$ :  it is the symmetric matrix defined by 
$$ \forall \xi \in \R^3, \quad B^0 \xi \cdot \xi := \inf_z \E \int_{Q_1 \setminus F} |\xi + z|^2, $$
where the infimum is taken over the set of vector fields $z = z(y,\omega) \in L^2(\Omega, L^2_{loc}(\R^3))$ satisfying 
\begin{itemize}
\item[i)] $z$ solenoidal, that is ${\rm div}_y \, z = 0$    
\item[ii)] $z$ stationary, that is $z(y+y',\omega)= z(y,\tau_y'(\omega))$ 
\item[iii)] $\E z = 0$. 
\end{itemize}
Then, part of the proof of Theorem \ref{Zhikov} is to show that $B^0$ is invertible, and that the effective conductivity matrix is given by 
\begin{equation} \label{defi_Ahom}
 A^0 = (B^0)^{-1}.   
\end{equation}

\mspace
Our goal is to relax the assumption in the previous theorem, notably to identify  configurations for which no condition on the minimal distance is needed. Our criterion for homogenization will be expressed again through multigraphs of inclusions. A key role will be played by the following discrete energy functional: for  $F$ a closed set  satisfying (G1)-(G2),  and two families 
$\{u_I\}$, indexed by $I \in \mathrm{CC}(F)$, and $\{b_{IJe}\}$ indexed by triplets with $I,J \in  \mathrm{CC}(F)$, and $e \in \mathrm{Ed}(F)$ s.t.  $I \overset{e}{\leftrightarrow} J$, we introduce the energy functional 
\begin{equation} \label{defi:En}
\mE\Big(F, \{u_I\}, \{b_{IJe}\}\Big) =   \sum_{I,J \in \mathrm{CC}(F)} \sum_{\substack{e \in \mathrm{Ed}(F), \\ I \overset{e}{\leftrightarrow} J}}  \mu_e  |b_{IJe} - b_{JIe} + u_I - u_J |^2 + \sum_{I \in \mathrm{CC}(F)}| I | \,  |u_I|^2.
\end{equation}

\mspace
We start with the definition of the homogenized matrix. We remind the notation  $I_{0,F}$ from Remark \ref{rem_I0_C0}. We denote $x_S$ the center of mass of a subset $S$ of $\R^3$. 
\begin{proposition} {\bf (Existence of the homogenized matrix)} \label{prop_corrector}

\sspace
Let $F$ an admissible set of inclusions. If  $\E \,  \diam(I_{0,F})^2 < +\infty$  and if almost surely 
\begin{equation}  \label{H1} \tag{H1}
\limsup_{N \rightarrow +\infty} \:   \inf_{\{u_I\}} \: \frac{1}{|Q_N|} \mE\Big(F_N, \{u_I\}, \{b_{IJe}  = \xi \cdot x_I\} \Big)  < +\infty 
\end{equation}
where $\mE$ is given in \eqref{defi:En}, and $F_N$ in \eqref{defi:FN}, then the matrix $A^0$ introduced in \eqref{defi_Ahom} is well-defined. 
 \end{proposition}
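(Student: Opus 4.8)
## Proof Plan for Proposition 1.8 (Existence of the homogenized matrix)

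The plan is to show that assumption \eqref{H1} provides, for each $\xi \in \R^3$, an admissible competitor in the variational problem defining $B^0 \xi \cdot \xi$, thereby guaranteeing $B^0 \xi \cdot \xi < +\infty$, and then to show that $B^0$ is positive definite so that $A^0 = (B^0)^{-1}$ makes sense. The first part is the heart of the matter. Given $\xi$, I would fix $N$ large and take $\{u_I\}$ a near-optimal choice in the finite-volume minimization in \eqref{H1}, with the weighted-graph energy $\mE(F_N, \{u_I\}, \{b_{IJe} = \xi \cdot x_I\})$ controlled uniformly in $N$ by some deterministic constant $C$ (using that \eqref{H1} holds a.s. and, via the ergodic theorem applied to the stationary fields from Remark 1.5 together with $\E \diam(I_{0,F})^2 < +\infty$, that the quantities involved are integrable). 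From this discrete data I would build a solenoidal stationary mean-zero vector field $z$ on $\R^3 \times \Omega$ realizing $\E \int_{Q_1 \setminus F} |\xi + z|^2 \le B^0\xi\cdot\xi + o(1) \le C$. Concretely: inside the bulk of $\R^3 \setminus F$ one sets $\xi + z = \xi$ (a constant, hence solenoidal), and near each gap $e = [x_{I,\alpha}, x_{J,\beta}]$ one inserts a localized divergence-free correction, supported in a neighborhood of the segment $e$, carrying a ``flux'' dictated by the discrete variables — the jump $u_I - u_J$ across the gap and the edge-fluxes $b_{IJe}$ — so that div-compatibility (Kirchhoff / zero net flux through each $\pa I$) holds. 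This is exactly the dual (solenoidal) counterpart of the potential-field construction sketched in the introduction, and it is where the extension results of Section 2 for solenoidal fields in thin necks, with energy cost proportional to $\mu_e = |\ln d(I_\alpha,J_\beta)|$, are invoked. Summing the local energies over all gaps reproduces the first sum in $\mE$; the $\sum_I |I|\,|u_I|^2$ term accounts for the cost of matching these neck-fields to constants over the inclusions, and the quadratic growth of $\diam(I_{0,F})$ together with the constant in \eqref{H1} keeps everything summable. Passing $N \to \infty$ (after the usual rescaling/averaging, using stationarity and ergodicity to identify the $N$-average limit with the probabilistic expectation) yields an admissible $z$, hence $B^0\xi\cdot\xi < +\infty$ for every $\xi$.

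Next I would record that $B^0$ is symmetric and nonnegative by construction (it is an infimum of a nonnegative quadratic functional), so it only remains to prove $B^0 \xi \cdot \xi > 0$ for $\xi \neq 0$. Here the key point is that $\R^3 \setminus F$ has positive density ($\E 1_{\R^3 \setminus F} = 1 - \lambda > 0$, with $\lambda < 1$ by admissibility since inclusions satisfy an exterior ball condition) and that any admissible solenoidal stationary mean-zero $z$ satisfies, by the standard duality between potential and solenoidal fields in stochastic homogenization (cf. \cite[chapter 8]{book:ZKO}), $\E \int_{Q_1\setminus F} (\xi + z)\cdot \na\phi = 0$ for every stationary potential field $\na\phi$; testing against $\na\phi = 0$ is trivial, but testing with the constant direction shows $\E\int_{Q_1\setminus F}(\xi+z)\cdot e_k$ is fixed, and a Cauchy–Schwarz / Jensen argument then bounds $|\xi|^2 (1-\lambda)^2 \le \big(\E\int_{Q_1\setminus F} 1\big)\big(\E\int_{Q_1\setminus F}|\xi+z|^2\big)$, giving $B^0\xi\cdot\xi \ge (1-\lambda)|\xi|^2 > 0$. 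Therefore $B^0$ is invertible and $A^0 = (B^0)^{-1}$ is a well-defined, symmetric, positive-definite matrix.

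The main obstacle I anticipate is the rigorous construction of the stationary solenoidal field $z$ from the discrete minimizer: one must (i) localize each neck correction so supports do not overlap — here (G2), which grants each gap-component $I_\alpha$ at most one partner and uniformly curved separating paraboloids, is essential — (ii) make the construction genuinely stationary rather than merely defined on large boxes $Q_N$, which requires either a direct stationary definition of $z$ in terms of $F(\tau_y\omega)$ and the (measurable) choice of near-optimal $\{u_I\}$, or a careful diagonal/averaging argument over $N$; and (iii) verify that the total energy is controlled by $\mE$ up to a universal multiplicative constant, i.e. that the neck-by-neck solenoidal extension from Section 2 has the claimed $\mu_e$-dependence and that the inclusion matching cost is $O(|I|\,|u_I|^2)$ uniformly, using only the admissibility constants $d,\delta,a,\sup_I N_I$. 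The bookkeeping for clusters of large diameter — ensuring the moment bound $\E\diam(I_{0,F})^2<\infty$ (and, implicitly, finiteness of cluster sizes via the ergodic theorem of Remark 1.5) actually absorbs the geometric losses — is the delicate quantitative part.
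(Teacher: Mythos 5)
Your proposal has a genuine gap at its decisive step, the positive definiteness of $B^0$ (which is the whole content of the proposition, since $A^0=(B^0)^{-1}$). You claim that testing with constant directions shows that $\E\int_{Q_1\setminus F}(\xi+z)\cdot e_k$ is fixed, and deduce $B^0\xi\cdot\xi\ge (1-\lambda)|\xi|^2$ by Cauchy--Schwarz. This implicitly uses $\E\, 1_{Q_1\setminus F}\, z=0$. But the admissible fields are only mean-free over the whole probability space, $\E z=0$, and the orthogonality between stationary solenoidal mean-free fields and potential fields concerns integrals over all of $\Omega$, not over $\Omega\setminus\mF$: the field $1_{\Omega\setminus\mF}\,e_k$ is not a potential field, so $\E\, 1_{\Omega\setminus\mF}\, z$ is not controlled and may be nonzero (the field can nearly cancel $\xi$ outside $F$ while compensating inside $F$). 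Hence the lower bound does not follow. In fact a bound of the form $B^0\ge (1-\lambda)\,\mathrm{Id}$ cannot be true without a hypothesis like \eqref{H1}: it would make $B^0$ invertible for every admissible set of inclusions, whereas in the dense regimes recalled in the introduction the effective resistance degenerates (the effective conductivity blows up), which is exactly why \eqref{H1} and the moment bound are assumed. Note also that the first half of your plan addresses a non-issue: finiteness of $B^0\xi\cdot\xi$ is trivial, since $z=0$ is admissible and gives $B^0\xi\cdot\xi\le |\xi|^2$; no solenoidal competitor built from the discrete minimizer is needed, and \eqref{H1} must instead be spent on the lower bound.

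For comparison, the paper's proof goes the dual way. It invokes \cite[Lemma 8.7]{book:ZKO}: if for $\omega$ in a set of positive measure one can find potential fields $v^\eps\in L^2(U)$ with $v^\eps\vert_{F^\eps(\omega)}=0$, $v^\eps\rightharpoonup \xi$ weakly in $L^2(U)$ and $\limsup_\eps\|v^\eps\|_{L^2}\le C|\xi|$, then $B^0$ is positive definite. Under \eqref{H1} and $\E\,\diam(I_{0,F})^2<+\infty$, Proposition \ref{prop_extension_H1} provides $\phi^\eps\in H^1_0(U)$ with $\na\phi^\eps=\xi$ in $F^\eps$, $\|\na\phi^\eps\|_{L^2(U)}\le C|\xi|$ and $\|\phi^\eps\|_{L^2(U)}\le C\eps|\xi|$, and one takes $v^\eps=\na(\xi\cdot x-\phi^\eps)$, which is potential, vanishes on $F^\eps$, is uniformly bounded and converges weakly to $\xi$. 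So the discrete energy in \eqref{H1} is used to build \emph{potential} fields that are affine-corrected to be constant (gradient $\xi$) on the inclusions, the per-edge cost being $\mu_e$ through the Keller-type bridge functions; this, rather than a solenoidal construction aimed at an upper bound, is the direction your argument would have to take.
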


\mspace
Compared to the assumptions in Theorem \ref{Zhikov}, those of Proposition \ref{prop_corrector} are better in two regards. First,  the uniform bound on the diameter of the inclusions (see Remark \ref{rem_Antoine})  is replaced by a moment bound. This possibility of considering inclusions of arbitrary large size, interesting in its own, will turn very useful when combined to our homogenization result involving shorts, {\it cf.} Theorem \ref{thm_homog}. Second, as mentioned in the introduction, \eqref{H1} is weaker than  \eqref{assumption_Jikov_type}. Indeed, for inclusions satisfying  $\sup_{I \in \mathrm{CC}(F)} \:  \diam(I) < +\infty$, the latter clearly implies 
$$ \limsup_{N \rightarrow +\infty}\frac{1}{|Q_N|}  \mE(F_N, \{0\}, \{\xi \cdot x_I\}) < +\infty.$$
 Further discussion of \eqref{H1} will be provided in Section \ref{section_disc_assumptions}, notably its relation to the graph Laplacian or to the subadditive ergodic theorem. We will also prove that \eqref{H1} is satisfied by sets $F$  satisfying  the moment bound 
$$ \E  \, \diam(C_{0,F})^2 < +\infty $$
where $C_{0,F}$ is the cluster of $F$ containing $0$, {\it cf. }Remark \ref{rem_I0_C0}.

%The rough idea behind \eqref{H1} is the following:  for $e = [x_{I,\alpha}, x_{J,\beta}]$ and $|e| = d(I_\alpha, I_\beta) \ll 1$,  one has $\mu_e  |\xi \cdot (x_I - x_J) + u_I - u_J |^2 \gtrsim \int_{V_e}|\na u|^2$ for any function  $u$ that  is harmonic in the gap $V_e$ between $I_\alpha$ and $J_\beta$, and satisfies $u = u_I + \xi \cdot x$ on $I$, $u = u_J + \xi \cdot x$ on $J$. This will be seen in the analysis carried in Section \ref{section_preliminary}. Let us mention that this fact had been already underlined in \cite{BeKoNo}, with a similar lower bound $\mu_e  |\xi \cdot (x_I - x_J) + u_I - u_J |^2 \lesssim \int_{V_e}|\na u|^2$ reached in the case of spheres. See also \cite{gerard2012computation} for similar questions related to drag computations. Hence, the quantity 
%$$ \mE\Big(F_N, \{u_I\}, \{\xi \cdot x_I\} \Big)  =  \sum_{I,J \in \mathrm{CC}(F_N)} \sum_{\substack{e \in \mathrm{Ed}(F_N), \\ I \overset{e}{\leftrightarrow} J}}  \mu_e  |\xi \cdot  (x_I - x_J)  + u_I - u_J |^2 + \sum_{I \in \mathrm{CC}(F_N)}  |u_I|^2 $$
% is broadly an upper bound for the energy over $Q_N$ of a  function $u$ that is harmonic outside $\cup_{I \in F_N} I$, and satisfies the Dirichlet condition  $u = u_I + \xi \cdot x_I$ on each $I$, $i \in \CC(F_N)$.  After minimization over the possible boundary constants $u_I$, assumption \eqref{H1} appears as the natural condition to get a  bound that is uniform in $N$ on the mean energy of the corrector over $Q_N$. 
 \mspace
It is an interesting open problem to be able to perform homogenization under the mere assumption \eqref{H1}. We must here strengthen it a little.

\begin{theorem} {(\bf Homogenization of stiff inclusions)} \label{thm_homog}

\sspace
Let $F$ an admissible set of inclusions. We assume that there exists an admissible short  $F'$ of $F$ and  some $s  \in (3,6)$ ($s  \in (3,\infty)$  in the case $F' = F$)   such that, almost surely:
\begin{equation}  \label{H2} \tag{H2}
\limsup_{N \rightarrow +\infty} \:    \:   \frac{1}{|Q_N|}\sup_{\{b_{IJe}\}}   \inf_{\{u_I\}} \: \mE\Big(F'_N, \{u_I\}, \{b_{IJe}\}\Big) / \Big( \frac{1}{|Q_N|} \sum_{I,J \in \mathrm{CC}(F'_N)} \sum_{\substack{e \in \mathrm{Ed}(F'_N), \\ I \overset{e}{\leftrightarrow} J}} |b_{IJe}|^s  \Big)^{2/s} < +\infty 
\end{equation}
Then,  there exists $p = p(s)$ such that under the additional condition: 
\begin{equation}  \label{moment_bound_F}
\E \, \diam(I_{0,F'})^p < +\infty, 
\end{equation}
the solution $u^\eps$ of \eqref{main_system} converges a.s. weakly in $H^1_0(U)$ to the solution $u^0$ of \eqref{target_system}, with $A^0$ defined by \eqref{defi_Ahom}. 
\end{theorem}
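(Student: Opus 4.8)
The plan is to run the div--curl (compensated compactness) scheme behind the proof of Theorem~\ref{Zhikov}, the one genuinely new ingredient being that the extension of the flux inside the inclusions, classically obtained from the logarithmic moment bound, is here built by routing the flux through the multigraph of the \emph{shorted} set $F'$, with its cost controlled by \eqref{H2}. I would proceed in four steps: (a) an a priori estimate and compactness; (b) the construction of a bounded extended current, the heart of the matter; (c) passage to the limit by the div--curl lemma and identification of the limit flux through the correctors of the $B^0$-problem; (d) a perturbation argument letting the width of the bridges of $F'$ shrink to $0$, so as to replace $F'$ by $F$. For Step~(a): from the given bound $\|\na u^\eps\|_{L^2(U)}\le C\|f\|_{L^{6/5}(U)}$, along a subsequence $u^\eps\rightharpoonup u^0$ in $H^1_0(U)$, hence $u^\eps\to u^0$ in $L^2(U)$ by Rellich and $\na u^\eps\rightharpoonup\na u^0$ in $L^2(U)$.

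\emph{Step (b): the core.} I would work with the whole family $F'_\eta$, $0<\eta\le\eta_0$, of admissible shorts obtained by shrinking the bridge radius to $\eta$: all of them share the same multigraph, the same edges and the same weights, hence satisfy \eqref{H2} with one common constant, and satisfy \eqref{moment_bound_F} (and \eqref{H1}, so that $A^0_{F'_\eta}=(B^0_{F'_\eta})^{-1}$ is well-defined by Proposition~\ref{prop_corrector}) up to harmless variations of the inclusion volumes. The key point is, for each $\eta$, to produce a current $\sigma^\eps_\eta\in L^2(U)^3$ with $\sigma^\eps_\eta=\na u^\eps$ on $U\setminus(F'_\eta)^\eps$, $\div\sigma^\eps_\eta=-f\,1_{U\setminus F^\eps}$, and $\|\sigma^\eps_\eta\|_{L^2(U)}\le C\|f\|_{L^{6/5}(U)}$ with $C$ independent of $\eps$ \emph{and} of $\eta$. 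On each component $I'^\eps$ of $(F'_\eta)^\eps$ this amounts to extending a vector field with prescribed normal trace on $\pa I'^\eps$, of compatible (zero) total flux by the Kirchhoff conditions on the $F$-inclusions composing $I'^\eps$ — the non-local extension anticipated in the introduction. I would do it in the spirit of network approximation: split $I'^\eps$ into its inclusions and the (possibly still thin) gaps joining them; attach to each gap $e$ a scalar flux $b_{IJe}$ read off from $\pa_\nu u^\eps$; use the extension and trace estimates for potential and solenoidal fields of Section~\ref{section_preliminary} to pass between the continuous gap-by-gap problem and the discrete one, bounding $\|\sigma^\eps_\eta\|_{L^2}^2$ by $\inf_{\{u_I\}}\mE\big(F'_N,\{u_I\},\{b_{IJe}\}\big)$ at scale $N\sim1/\eps$; and invoke \eqref{H2} to majorize this infimum by $\big(\tfrac{1}{|Q_N|}\sum|b_{IJe}|^s\big)^{2/s}$ up to a constant. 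It then remains to control $\sum|b_{IJe}|^s$ by the physical energy $\|\na u^\eps\|_{L^2(U)}^2$: here one uses the embedding $\ell^2\hookrightarrow\ell^s$ (valid since $s\ge2$), the three-dimensional Sobolev inequality $H^1\hookrightarrow L^6$ to estimate the flux through a gap bordering an inclusion of diameter $D$ — which is exactly what forces $s<6$ when $F'\neq F$ — and the ergodic theorem applied to the stationary field $y\mapsto\diam(I_{y,F'})$, whose integrability in the norm needed to absorb the resulting powers of $D$ is ensured by \eqref{moment_bound_F} for a suitable $p=p(s)$. One also notes that $\div\sigma^\eps_\eta=-f\,1_{U\setminus F^\eps}$ is precompact in $H^{-1}(U)$, because $f\,1_{F^\eps}$ is bounded in $L^{6/5}(U)\hookrightarrow\hookrightarrow H^{-1}(U)$ in dimension three — this is the role of the hypothesis $f\in L^{6/5}$.

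\emph{Steps (c)--(d): identification and return to $F$.} Fix $\eta$. Up to a subsequence $\sigma^\eps_\eta\rightharpoonup\sigma^0_\eta$ in $L^2(U)$; since $\div\sigma^\eps_\eta$ is precompact in $H^{-1}(U)$ and $\curl\na u^\eps=0$, the div--curl lemma gives $\sigma^\eps_\eta\cdot\na u^\eps\rightharpoonup\sigma^0_\eta\cdot\na u^0$ in $\mathcal D'(U)$; on the other hand $\sigma^\eps_\eta\cdot\na u^\eps$ equals $|\na u^\eps|^2$ off $(F'_\eta)^\eps$, equals $0$ on $F^\eps$, and is supported in the bridges $(F'_\eta\setminus F)^\eps$ elsewhere. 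To identify $\sigma^0_\eta$ I would follow \cite[Ch.~8]{book:ZKO}: for $\xi\in\R^3$ take $z_\xi$ realizing the infimum defining $B^0_{F'_\eta}\xi\cdot\xi$, form the stationary solenoidal field $g_\xi=1_{\R^3\setminus F'_\eta}(\xi+z_\xi)$, note $g_\xi(\cdot/\eps)\rightharpoonup\E g_\xi$ by the ergodic theorem, and combine the div--curl pairing of $g_\xi(\cdot/\eps)$ with $\na u^\eps$ with the pairing of $\sigma^\eps_\eta$ against the oscillating test functions built from $z_\xi$; unwinding these two identities yields $-\div\sigma^0_\eta=(1-\lambda'_\eta)f$ and $\sigma^0_\eta=A^0_{F'_\eta}\na u^0$ with $\lambda'_\eta=\E 1_{F'_\eta}$ and $A^0_{F'_\eta}=(B^0_{F'_\eta})^{-1}$, so that $u^0$ solves \eqref{target_system} for $(A^0_{F'_\eta},\lambda'_\eta)$. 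Letting finally $\eta\to0$: the bridges have volume density tending to $0$, so $\lambda'_\eta\to\lambda$ and the $\eta$-uniform $L^2$ bound of Step~(b) forces the bridge contribution in Step~(c) to vanish; moreover $B^0_{F'_\eta}\to B^0$ — take the fixed $F$-corrector as a competitor, the defect being supported on the shrinking bridges — hence $A^0_{F'_\eta}\to A^0=(B^0)^{-1}$. Therefore $u^0$ solves \eqref{target_system} for $(A^0,\lambda)$, and by uniqueness of $u^0$ the whole sequence $u^\eps$ converges, which is the claim. (When $F'=F$ one drops $\eta$ altogether and Steps~(a)--(c) suffice, allowing $s\in(3,\infty)$.)

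\emph{Main obstacle.} The crux is without doubt Step~(b): producing the extended current $\sigma^\eps_\eta$ with an $L^2$ bound independent of $\eps$ and of $\eta$, that is, the reduction of its norm to the discrete functional $\mE$, the application of \eqref{H2}, and the bookkeeping of large inclusions through the moment bound and the ergodic theorem (which is also what pins down $p(s)$ and the range $s<6$). The remaining ingredients — the a priori bound, the div--curl lemma, the corrector identification of \cite{book:ZKO}, and the $\eta\to0$ perturbation — are by now routine adaptations of Zhikov's scheme, with the care that the comparison between the problems for $F$ and for $F'_\eta$, minimizers of $v\mapsto\tfrac12\int_U|\na v|^2-\int_U fv$ over the nested subspaces $K^\eps_{F'_\eta}\subset K^\eps_F$ of $H^1_0(U)$, can also be packaged by the exact identity $\tfrac12\|\na(u^\eps_F-u^\eps_{F'_\eta})\|_{L^2}^2=E^\eps_{F'_\eta}-E^\eps_F$.
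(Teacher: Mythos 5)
Your overall architecture (a priori bound, extension of the flux inside the inclusions, div--curl identification via the $B^0$-correctors, then an approximation argument to pass from the short back to $F$) is the right one and matches the paper in spirit, but Step (b) contains a gap that the rest of your plan leans on. Under \eqref{H2} with $s>3$ one cannot produce an extended current $\sigma^\eps_\eta$ with an $L^2(U)$ bound uniform in $\eps$ (let alone in $\eta$). What \eqref{H2} buys is an $H^1$-bounded \emph{potential} extension of data given on the inclusions, and only for data in $W^{1,\tilde s}$ with $\tilde s>s$ (Proposition \ref{prop_extension_H2}); the divergence-preserving extension of the flux is then obtained by duality against such potential extensions (Proposition \ref{prop_div_extensions}), and this duality inherently degrades the integrability to $L^t$ with $t<s'=\frac{s}{s-1}<\frac32$. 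The whole downstream machinery of the paper exists precisely because no $L^2$ bound is available: the div--curl pairing with $\na w^\eps=\xi+\bar z(\cdot/\eps)$ must be done through a truncation argument \`a la Zhikov, and in the general short case an extra elliptic estimate on the bridges, \eqref{extra_int_Zkappa}, is needed to recover an $L^2$ control there, which is where the restriction $s<6$ and the exponent $p(s)$ really come from. Your proposed route to the $L^2$ bound also does not match the structure of the discrete functional: $\mE$ with the weights $\mu_e$ is the cost of Dirichlet-type (potential) extensions across gaps, not an upper bound for the $L^2$ norm of a solenoidal extension; and the final step, controlling $\sum_e|b_{IJe}|^s$ for gap fluxes $b_{IJe}$ read off from $\pa_\nu u^\eps$ by $\|\na u^\eps\|_{L^2}^2$ via $\ell^2\hookrightarrow\ell^s$ and Sobolev, is unjustified: the normal flux through a narrow gap is only an $H^{-1/2}$ object and can concentrate as inclusions nearly touch --- this concentration is exactly the difficulty the theorem is about, so it cannot be dismissed by a soft embedding.

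Two further points where your scheme diverges from the paper in a way that creates, rather than removes, difficulties. First, your approximating family shrinks the bridge \emph{width} $\eta\to0$ while keeping all bridges; then $F'_\eta$ is no longer uniformly admissible (the interior ball condition in (G1) forces the constants to degenerate with $\eta$), so even the $L^t$ extension constants blow up, and your claimed ``one common constant'' in \eqref{H2}-based estimates is not available. The paper instead keeps the short $F'$ fixed and introduces $F^\kappa$ by \emph{deleting} the bridges across gaps wider than $\kappa$ (Lemma \ref{lemmaH2H2kappa}); all $F^\kappa$ share the same uniform geometry, the constants are allowed to depend on $\kappa$, and the limit is taken first in $\eps$, then in $\kappa$. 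Second, your identification goes through $A^0_{F'_\eta}$ and the claim $B^0_{F'_\eta}\to B^0$; the competitor argument you sketch only gives the one-sided inequality $B^0_{F'_\eta}\le B^0$ (the reverse inequality would require a removability/capacity argument for the shrinking perfectly conducting bridges, which you do not supply). The paper avoids this entirely by always pairing with the corrector of $F$ itself, so that only the matrix $A^0$ of \eqref{defi_Ahom} ever appears, and the bridge contributions are error terms ($J^{\kappa,\eps}$, $K^{\kappa,\eps}$) shown to vanish as $\kappa\to0$ by the ergodic theorem and the $L^2$ bridge estimate.
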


%\begin{theorem} {(\bf Homogenization of stiff inclusions, with short)} \label{thm_homog_short}
%
%\sspace
%Let $F$ an admissible set of inclusions, s.t. for some admissible short $F'$ of $F$ and for some $s \in (3,6)$,  almost surely:
%\begin{equation}  \label{H2'} \tag{H2'}
%\limsup_{N \rightarrow +\infty} \:    \:   \frac{1}{|Q_N|}\sup_{\{b_{IJe}\}}   \inf_{\{u_I\}} \: \mE\Big(F'_N, \{u_I\}, \{b_{IJe}\}\Big) / \Big( \frac{1}{|Q_N|} \sum_{I,J \in \mathrm{CC}(F'_N)} \sum_{\substack{e \in \mathrm{Ed}(F'_N), \\ I \overset{e}{\leftrightarrow} J}} \frac{|b_{IJe}|^s}{\diam(I)^{s-3}}  \Big)^{2/s} < +\infty 
%\end{equation}
%Then, under the additional condition: 
%\begin{equation}  \label{moment_bound_F'} 
%\E \, \diam(I_{0,F'})^p < +\infty  \quad \: p = p(s) \text{ large enough}
%\end{equation}
%the same conclusion as in Theorem \ref{thm_homog} holds. 
%\end{theorem}

\mspace
Here are a few remarks, to be complemented in Section \ref{section_disc_assumptions}.

\mspace
i) Of course, in previous statements, it is enough that  all assumptions involving the $\delta$-multigraph of $F$ hold for {\em some} $\delta > 0$. In practice, one should take $\delta$  small, to have a reduced number of edges in the multigraph.

\mspace
ii) In practice, the following reformulation of \eqref{H2}  will be used:

\sspace
One can find  almost surely an $M = M(\omega) >0$ satisfying:  for all $N>0$,  for any family $\{b_{IJe} \}$ indexed by $I,J,e \in \CC(F_N)^2 \times \Ed(F_N)$ with $I \overset{e}{\leftrightarrow} J$, there exists a family $\{ u_{I} \}_{I \in CC(F_N)}$ such that 
\begin{equation} \label{reformulation_H2}
\mE\Big(F_N, \{u_I\}, \{b_{IJe}\}\Big)   \leq M |Q_N|\Big( \frac{1}{|Q_N|} \sum_{I,J \in \CC(F_N)} \sum_{\substack{e \in \Ed(F_N), \\ I \overset{e}{\leftrightarrow} J}} |b_{IJe}|^s  \Big)^{2/s}.
\end{equation}
%\item[i)] One can be more specific about the set $F'$ fulfilling \eqref{H2}. Namely, one can assume that $F'$ is a short of $F$, {\it cf.} Definition \ref{defi:short_inclusions}. See Lemma \ref{lemma_short}. 

\mspace
iii)  Let $F$ an admissible set of inclusions, $F'$ an admissible short of $F$. If 
$$ \E \diam(I_{0,F'})^s < +\infty $$
then \eqref{H2} implies \eqref{H1}.   See Lemma \ref{lem_H2_implies_H1}.  In particular, by Proposition \ref{prop_corrector}, the matrix $A^0$ is well-defined

\mspace
iv)  \eqref{H2} is implied by the following logarithmic moment bound, see Lemma  \ref{lemma_logH2}: 
\begin{equation} \label{log_bound_mue}
  \limsup_{N \rightarrow +\infty} \frac{1}{|Q_N|} \sum_{e \in \mathrm{Ed}(F_N)}   \mu_e^{k} < +\infty, \quad k = \big(\frac{s}{2}\big)' = \frac{s}{s-2}. 
 \end{equation}
In the case where $F' = F$,  $s$ can be taken arbitrarily large in Theorem \ref{thm_homog}, so that this sufficient condition is almost the standard one (one can take any $k >1$). For a general short $F'$, we are limited to $s < 6$ for technical reasons, hence to $k > \frac{3}{2}$.  

%\mspace
%iv) Under the assumptions of  Theorem \ref{thm_homog_short}, one can As will be shown in Lemma \ref{lemma_H1H2},  if (H1) is satisfied by a short $F'$ of $F$, then it is satisfied by $F$. Combining this remark with Remark ii) above, applied with $F'$ instead of $F$, we see that the assumptions of Theorem \ref{thm_homog_short} allow to apply  Proposition \ref{prop_corrector}, so that $A^0$ is well-defined. 

\mspace
iv) A corollary  of Theorem \ref{thm_homog},  to be established in Section  \ref{section_disc_assumptions} and illustrated in Figure \ref{exemple_2}, is the following: 

\begin{corollary} \label{corollary_cyclefree_new}
Let $F$ an admissible set of inclusions with a.s. $\sup_{I \in \CC(F)} \diam(I) < +\infty$. Assume that  $Gr(F)$ is cycle-free, and that 
$$\E \, (\sharp C_{0,F})^p < +\infty,  \quad \text{ for some } \: p > 2,$$
where $C_{0,F}$ is the cluster of $F$ containing $0$, {\it cf. }Remark \ref{rem_I0_C0}.   Then, \eqref{H2} is satisfied with $F' = F$ and  $s=\frac{2p}{p-2}$, so that homogenization holds. 
\end{corollary}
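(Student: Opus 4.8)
The plan is to verify assumption \eqref{H2} with $F' = F$, which by Theorem \ref{thm_homog} (and the moment bound \eqref{moment_bound_F}, to be checked at the end) yields homogenization. The starting point is that when $Gr(F)$ is cycle-free, each connected component of the multigraph is a forest-tree, and on a tree the minimization problem defining $\mE$ decouples nicely: given a family $\{b_{IJe}\}$, one can try to \emph{exactly} cancel each edge term $\mu_e|b_{IJe} - b_{JIe} + u_I - u_J|^2$ by solving $u_I - u_J = b_{JIe} - b_{IJe}$ along the edges. On a tree this is always solvable component by component — pick a root $I_0$ in each cluster, set $u_{I_0} = 0$, and propagate the prescribed differences along the unique path to each other node. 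Thus for the cycle-free case,
$$
\inf_{\{u_I\}} \mE\big(F_N, \{u_I\}, \{b_{IJe}\}\big) \;\le\; \sum_{I \in \CC(F_N)} |I|\, |u_I|^2,
$$
where $u_I$ is the telescoping sum of at most $\sharp C_{0,F} - 1$ increments of the form $b_{JKe} - b_{KJe}$ along the path from the root of $I$'s cluster to $I$.

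The key estimate is then to bound $\sum_I |I|\,|u_I|^2$ by $M |Q_N| \big(\frac1{|Q_N|}\sum |b_{IJe}|^s\big)^{2/s}$ in the form \eqref{reformulation_H2}. First, since $\diam(I)$ is bounded a.s. by some deterministic $D$ (Remark \ref{rem_Antoine}), $|I| \le C D^3$ is bounded, so it suffices to control $\sum_I |u_I|^2$. By Cauchy–Schwarz applied to the path sum, $|u_I|^2 \le (\sharp C_{I,F})\sum_{e \text{ on path}} |b_{JKe} - b_{KJe}|^2 \le 2(\sharp C_{I,F})\sum_{e \in C_{I,F}}(|b_{IJe}|^2+|b_{JIe}|^2)$; summing over $I$ in a given cluster $C$ gives a factor $\sharp C$ more, so $\sum_{I \in C}|u_I|^2 \lesssim (\sharp C)^2 \sum_{e \in C}\max_{I,J}|b_{IJe}|^2$. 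Summing over clusters $C$ meeting $Q_N$ and using that each edge lies in exactly one cluster,
$$
\sum_{I \in \CC(F_N)} |u_I|^2 \;\lesssim\; \sum_{C}(\sharp C)^2 \sum_{e \in \Ed(F_N) \cap C} |b_e|^2 \;\le\; \Big(\sup_{e}\, (\sharp C_{e})^2\Big) \sum_{e \in \Ed(F_N)} |b_e|^2,
$$
writing $|b_e|^2 := \max_{I \overset{e}{\leftrightarrow} J}|b_{IJe}|^2$. Now I apply Hölder's inequality in the edge sum with exponents $s/2$ and $(s/2)' = \frac{s}{s-2} = \frac{p}{... }$; with $s = \frac{2p}{p-2}$ one has $(s/2)' = \frac{p}{2}$, so
$$
\sum_{e} |b_e|^2 \;\le\; \Big(\sum_e |b_e|^s\Big)^{2/s}\,\Big(\sum_e 1\Big)^{1 - 2/s} = \Big(\sum_e |b_e|^s\Big)^{2/s} (\sharp \Ed(F_N))^{(p-2)/p}.
$$

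The main obstacle — and where the hypotheses are used quantitatively — is to control the two "deterministic-looking" but actually random factors $\sup_e (\sharp C_e)^2$ (over edges meeting $Q_N$) and $\sharp\Ed(F_N)$, and to reassemble everything into the ratio bound \eqref{H2} uniformly in $\{b_{IJe}\}$. For $\sharp\Ed(F_N)$: since $Gr(F)$ is cycle-free, within a cluster the number of edges equals $\sharp C - 1 \le \sharp C$, so $\sharp\Ed(F_N) \le \sum_{C \cap Q_N \neq \emptyset}\sharp C \lesssim \sum_{I \in \CC(F), I \cap Q_N \neq \emptyset} \sharp C_{I,F}$; by the ergodic theorem applied to the stationary field $(y,\omega)\mapsto \sharp(C_{y,F(\omega)})$ (Remark \ref{rem_I0_C0}), using $\E\,\sharp C_{0,F} < +\infty$ (a consequence of $\E(\sharp C_{0,F})^p < \infty$, $p>2$), this is $\lesssim |Q_N|\,\E\,\sharp C_{0,F}$ for $N$ large, a.s. Hence $(\sharp\Ed(F_N))^{(p-2)/p} \lesssim |Q_N|^{(p-2)/p} = |Q_N| / |Q_N|^{2/s}$, which is exactly the normalization appearing in \eqref{reformulation_H2}. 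For the $\sup_e(\sharp C_e)^2$ factor, one cannot hope for a deterministic bound, but one does not need one: absorbing this factor requires a sublinear-growth argument. The cleanest route is to not take the supremum out but instead keep it inside the cluster sum and invoke the ergodic theorem on the field $(y,\omega)\mapsto (\sharp C_{y,F})^2 \cdot(\text{indicator of one representative node per cluster})$ — more precisely, estimate $\sum_C (\sharp C)^2 |b_C|^2$ directly by Hölder with exponents $\frac{p}{2}$ on $(\sharp C)^2$ (legitimate since $\E(\sharp C_{0,F})^p<\infty$) and the conjugate $\frac{p}{p-2} = \frac{s}{2}$ on $|b_C|^2$, giving $\big(\sum_C(\sharp C)^{p}\big)^{2/p}\big(\sum_C|b_C|^{s}\big)^{2/s}$, and then bound $\sum_C(\sharp C)^p \lesssim \sum_{I \cap Q_N \neq \emptyset}(\sharp C_{I,F})^p \lesssim |Q_N|\E(\sharp C_{0,F})^p$ by the ergodic theorem. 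This produces precisely
$$
\mE\big(F_N,\{u_I\},\{b_{IJe}\}\big) \;\le\; C\,\big(|Q_N|\,\E(\sharp C_{0,F})^p\big)^{2/p}\Big(\sum_{e \in \Ed(F_N)}|b_e|^s\Big)^{2/s} \;=\; M |Q_N| \Big(\tfrac1{|Q_N|}\sum_e |b_e|^s\Big)^{2/s}
$$
with $M \asymp (\E(\sharp C_{0,F})^p)^{2/p}$, which is \eqref{reformulation_H2}, hence \eqref{H2}. Finally, the moment bound \eqref{moment_bound_F} holds because $\diam(I_{0,F'}) = \diam(I_{0,F})$ is bounded a.s. (Remark \ref{rem_Antoine}), so any moment is finite; thus Theorem \ref{thm_homog} applies and homogenization holds. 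The one point demanding care is the passage from "$\max_{I \overset{e}{\leftrightarrow}J}|b_{IJe}|$" back to the genuine quantity $\sum |b_{IJe}|^s$ in the denominator of \eqref{H2} — since $N_I$ is uniformly bounded, each node has boundedly many incident half-edges, so these differ only by a bounded factor and the reduction is harmless.
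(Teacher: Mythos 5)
Your overall architecture is the same as the paper's: kill every edge term of $\mE$ by telescoping the jumps $b_{JIe}-b_{IJe}$ along the unique path from a chosen root of each cluster (possible exactly because $Gr(F)$ is cycle-free), then estimate the remaining term $\sum_I |I|\,|u_I|^2$ by Cauchy--Schwarz along paths, a H\"older inequality with exponents $\tfrac p2$ and $(\tfrac p2)'=\tfrac s2$, and the ergodic theorem applied to $(\sharp C_{x,F})^p$; the exponent bookkeeping $\tfrac2p=1-\tfrac2s$ and the verification of \eqref{moment_bound_F} via the a.s.\ uniform diameter bound are fine. However, the crucial H\"older step is flawed as written. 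After Cauchy--Schwarz you are facing $\sum_{C}(\sharp C)^2\sum_{e\subset C}|b_e|^2$, a sum over \emph{edges} weighted by cluster size, but you apply H\"older over \emph{clusters} to the aggregated quantity $|b_C|^2=\sum_{e\subset C}|b_e|^2$, which leaves the factor $\bigl(\sum_C |b_C|^s\bigr)^{2/s}=\bigl(\sum_C(\sum_{e\subset C}|b_e|^2)^{s/2}\bigr)^{2/s}$. Since $s/2>1$, this quantity is \emph{larger} than $\bigl(\sum_e|b_e|^s\bigr)^{2/s}$, so the passage to the denominator of \eqref{H2} in your final display goes in the wrong direction and the stated bound does not follow from the displayed inequality.

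The repair is short and is essentially what the paper does. Apply H\"older edge-wise, i.e.\ to $\sum_e(\sharp C_e)^2|b_e|^2$ with exponents $\tfrac p2$ and $\tfrac s2$, which gives $\bigl(\sum_e(\sharp C_e)^{p}\bigr)^{2/p}\bigl(\sum_e|b_e|^{s}\bigr)^{2/s}$. The first factor now carries one extra power of the cluster size, since by cycle-freeness $\sum_e(\sharp C_e)^p=\sum_C \sharp\Ed(C)\,(\sharp C)^p\le\sum_C(\sharp C)^{p+1}$; this extra power is absorbed when converting the counting sum into a spatial integral, writing $(\sharp C)^{p+1}=\sum_{I\subset C}(\sharp C)^p$ and using that each inclusion has volume bounded below by the interior ball condition in (G1) --- or, as in the paper, by not discarding $|I|$ at the start, so that the per-edge weight is $(\sharp \mC)\,|\mC|$ and one uses $|\mC|\le C\,\sharp\mC$. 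Either way one gets $\sum_C(\sharp C)^{p+1}\le C\int_{Q_N}(\sharp C_{x,F})^p\,dx\le C'|Q_N|\,\E\,(\sharp C_{0,F})^p$ a.s.\ for large $N$, so only the $p$-th moment is needed and the bound \eqref{reformulation_H2} follows with $s=\tfrac{2p}{p-2}$, exactly as in the paper. Two minor remarks: your step $\sum_{I\cap Q_N\neq\emptyset}(\sharp C_{I,F})^p\lesssim|Q_N|\,\E(\sharp C_{0,F})^p$ already relies implicitly on this minimal-volume conversion from sums over inclusions to integrals, so it should be said; and your first route via $\sharp\Ed(F_N)$ and $\sup_e(\sharp C_e)^2$ is superfluous once the edge-wise H\"older is in place.
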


\definecolor{gray1}{gray}{0.85}
\definecolor{gray2}{gray}{0.67}
\definecolor{gray3}{gray}{0.55}

\begin{figure}[h!]
    \centering
    \psset{unit=0.85cm}
\begin{pspicture}(-5,-4)(4,2.5)
%Cluster 1
\begin{psclip}{\psframe[fillstyle=solid,linewidth=1.5pt,linecolor=black,fillcolor=gray4](-7,-4)(5.5,2.5)}
\pscircle[fillstyle=solid,fillcolor=gray,linecolor=black,linewidth=1pt](1,0){0.20}
\pscircle[fillstyle=solid,fillcolor=gray,linecolor=black,linewidth=1pt](1.33,0.35){0.20}
\pscircle[fillstyle=solid,fillcolor=gray,linecolor=black,linewidth=1pt](1.66,0.70){0.20}
\pscircle[fillstyle=solid,fillcolor=gray,linecolor=black,linewidth=1pt](1.99,1.05){0.20}
\pscircle[fillstyle=solid,fillcolor=gray,linecolor=black,linewidth=1pt](2.33,1.40){0.20}
\pscircle[fillstyle=solid,fillcolor=gray,linecolor=black,linewidth=1pt](2.65,1.75){0.20}
%Cluster 2
\pscircle[fillstyle=solid,fillcolor=gray,linecolor=black,linewidth=1pt](-5,-2.05){0.20}
\pscircle[fillstyle=solid,fillcolor=gray,linecolor=black,linewidth=1pt](-5.35,-2.35){0.20}
\pscircle[fillstyle=solid,fillcolor=gray,linecolor=black,linewidth=1pt](-5.80,-2.35){0.20}
\pscircle[fillstyle=solid,fillcolor=gray,linecolor=black,linewidth=1pt](-6.25,-2.35){0.20}
\pscircle[fillstyle=solid,fillcolor=gray,linecolor=black,linewidth=1pt](-6.6,-2.70){0.20}
\pscircle[fillstyle=solid,fillcolor=gray,linecolor=black,linewidth=1pt](-6.6,-3.15){0.20}
\pscircle[fillstyle=solid,fillcolor=gray,linecolor=black,linewidth=1pt](-6.15,-3.35){0.20}
\pscircle[fillstyle=solid,fillcolor=gray,linecolor=black,linewidth=1pt](-5.7,-3.55){0.20}
\pscircle[fillstyle=solid,fillcolor=gray,linecolor=black,linewidth=1pt](-5.25,-3.75){0.20}
\pscircle[fillstyle=solid,fillcolor=gray,linecolor=black,linewidth=1pt](-4.8,-3.95){0.20}
%Cluster 3
\pscircle[fillstyle=solid,fillcolor=gray,linecolor=black,linewidth=1pt](0,-2){0.20}
\pscircle[fillstyle=solid,fillcolor=gray,linecolor=black,linewidth=1pt](0.30,-1.65){0.20}
\pscircle[fillstyle=solid,fillcolor=gray,linecolor=black,linewidth=1pt](0.6,-1.32){0.20}
\pscircle[fillstyle=solid,fillcolor=gray,linecolor=black,linewidth=1pt](0.9,-1.65){0.20}
\pscircle[fillstyle=solid,fillcolor=gray,linecolor=black,linewidth=1pt](1.2,-2){0.20}
\pscircle[fillstyle=solid,fillcolor=gray,linecolor=black,linewidth=1pt](1.5,-1.65){0.20}
\pscircle[fillstyle=solid,fillcolor=gray,linecolor=black,linewidth=1pt](1.80,-1.32){0.20}
\pscircle[fillstyle=solid,fillcolor=gray,linecolor=black,linewidth=1pt](2.1,-1.65){0.20}
\pscircle[fillstyle=solid,fillcolor=gray,linecolor=black,linewidth=1pt](2.4,-2){0.20}
\pscircle[fillstyle=solid,fillcolor=gray,linecolor=black,linewidth=1pt](2.7,-1.65){0.20}
\pscircle[fillstyle=solid,fillcolor=gray,linecolor=black,linewidth=1pt](3,-1.32){0.20}
\pscircle[fillstyle=solid,fillcolor=gray,linecolor=black,linewidth=1pt](3.3,-1.65){0.20}
\pscircle[fillstyle=solid,fillcolor=gray,linecolor=black,linewidth=1pt](3.6,-2){0.20}
\pscircle[fillstyle=solid,fillcolor=gray,linecolor=black,linewidth=1pt](3.9,-1.65){0.20}
\pscircle[fillstyle=solid,fillcolor=gray,linecolor=black,linewidth=1pt](4.2,-1.32){0.20}
\pscircle[fillstyle=solid,fillcolor=gray,linecolor=black,linewidth=1pt](4.5,-1.65){0.20}
\pscircle[fillstyle=solid,fillcolor=gray,linecolor=black,linewidth=1pt](4.8,-2){0.20}
%cluster 4
\pscircle[fillstyle=solid,fillcolor=gray,linecolor=black,linewidth=1pt](-3,1){0.20}
\pscircle[fillstyle=solid,fillcolor=gray,linecolor=black,linewidth=1pt](-3,1.45){0.20}
\pscircle[fillstyle=solid,fillcolor=gray,linecolor=black,linewidth=1pt](-3.4,1.60){0.20}
\pscircle[fillstyle=solid,fillcolor=gray,linecolor=black,linewidth=1pt](-3.8,1.40){0.20}
\pscircle[fillstyle=solid,fillcolor=gray,linecolor=black,linewidth=1pt](-4,1){0.20}
\pscircle[fillstyle=solid,fillcolor=gray,linecolor=black,linewidth=1pt](-3.9,0.6){0.20}
\pscircle[fillstyle=solid,fillcolor=gray,linecolor=black,linewidth=1pt](-3.5,0.45){0.20}
\pscircle[fillstyle=solid,fillcolor=gray,linecolor=black,linewidth=1pt](-3.2,0.1){0.20}
%Cluster 5 
\pscircle[fillstyle=solid,fillcolor=gray,linecolor=black,linewidth=1pt](-1.53,-1){0.20}
\pscircle[fillstyle=solid,fillcolor=gray,linecolor=black,linewidth=1pt](-1.05,-1){0.20}
\pscircle[fillstyle=solid,fillcolor=gray,linecolor=black,linewidth=1pt](-1.9,-1.25){0.20}
\pscircle[fillstyle=solid,fillcolor=gray,linecolor=black,linewidth=1pt](-2,-1.7){0.20}

% Cluster 6
\pscircle[fillstyle=solid,fillcolor=gray,linecolor=black,linewidth=1pt](-2.5,-4){0.20}
\pscircle[fillstyle=solid,fillcolor=gray,linecolor=black,linewidth=1pt](-2.1,-3.8){0.20}
\pscircle[fillstyle=solid,fillcolor=gray,linecolor=black,linewidth=1pt](-1.75,-3.55){0.20}
% Cluster 7

\pscircle[fillstyle=solid,fillcolor=gray,linecolor=black,linewidth=1pt](-6,1){0.20}
\pscircle[fillstyle=solid,fillcolor=gray,linecolor=black,linewidth=1pt](-5.55,1.1){0.20}
\end{psclip}
\psframe[linewidth=1.5pt,linecolor=black](-7,-4)(5.5,2.5)
\end{pspicture}
    \caption{Cycle-free configuration set up.}
    \label{exemple_2}
\end{figure}
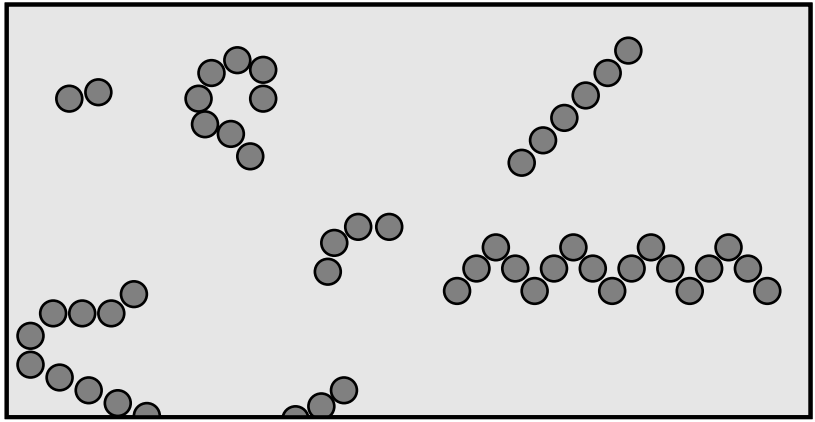

\definecolor{gray4}{gray}{0.90}

\mspace
v) Another important corollary  of Theorem \ref{thm_homog} is:
\begin{corollary} \label{coro_short_clusters}
Let $F$ an admissible set of inclusions.  Let $s \in (3,6)$ and $p=p(s)$ as in Theorem \ref{thm_homog}. If 
$$ \E \, \diam(C_{0,F})^p < +\infty $$
then homogenization holds. 
\end{corollary}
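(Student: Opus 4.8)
The plan is to deduce the statement from Theorem~\ref{thm_homog} by producing an admissible short $F'$ of $F$ (Definition~\ref{defi:short_inclusions}) for which~\eqref{H2} holds vacuously: namely the short that bridges \emph{every} gap of $F$, so that each cluster of $F$ becomes a single inclusion of $F'$. Concretely, I would fix some $s \in (3,6)$, let $p = p(s)$ be the exponent from Theorem~\ref{thm_homog}, and --- noting that the moment bound $\E\diam(C_{0,F})^p < +\infty$ together with stationarity and the volume lower bound from (G1) forces a.s.\ all clusters of $F$ to be finite --- insert along each edge $e = [x_{I,\alpha},x_{J,\beta}] \in \Ed(F)$ a smooth thin tube of radius $\eta$ joining $I_\alpha$ to $J_\beta$, with $\eta$ a small deterministic constant depending only on $d,\delta,a,\sup_I N_I$. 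Call $F'$ the resulting closed set. After rounding the tube-inclusion junctions, $F'$ should satisfy (G1) with a deterministic radius $d'$; since every edge of $F$ joins two inclusions of the \emph{same} cluster, each connected component of $F'$ is exactly one cluster of $F$ with its internal gaps filled, and (using that distinct clusters of $F$ lie at mutual distance $> \delta$, for otherwise their closest points would sit in gap components at distance $\le \delta$ and generate an edge between them) one checks from the uniform geometry that, for $\eta$ small, distinct components of $F'$ stay at mutual distance $\ge \delta_0 > 0$ for a deterministic $\delta_0$. Taking $\delta' := \delta_0/2$, the $\delta'$-multigraph $Gr(F')$ then has \emph{no edges}, so (G2) holds for $F'$ with $N_{I'} = 0$, $F'$ is an admissible set, and by construction it is an admissible short of $F$.

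Next I would observe that~\eqref{H2} for $F'$ is immediate: since $\Ed(F'_N) = \emptyset$ for all $N$, the reformulation~\eqref{reformulation_H2} holds with $M = 1$, because the only family $\{b_{IJe}\}$ is the empty one and the choice $u_I \equiv 0$ gives $\mE(F'_N,\{u_I\},\{b_{IJe}\}) = \sum_{I \in \CC(F'_N)} |I|\,|u_I|^2 = 0$.

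It then remains to verify the moment bound~\eqref{moment_bound_F}, i.e.\ $\E\diam(I_{0,F'})^p < +\infty$. I would argue that $\diam(I_{0,F'}) \le c_0 + G$ with $G := \sup_{|y| \le \rho}\diam(C_{y,F})$ for deterministic constants $c_0,\rho$ (if $0 \in F$, then $I_{0,F'}$ is $C_{0,F}$ with its gaps bridged; if $0 \in F'\setminus F$, then $0$ lies in a tube along some $e = [x_{I,\alpha},x_{J,\beta}]$ with $I,J$ in one cluster $C$, so $C = C_{y,F}$ for a $y$ with $|y| \le \delta + \eta$, and in both cases $\diam(I_{0,F'}) \le \diam(C_{y,F}) + c_0$). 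To estimate $\E G^p$, I would localize big clusters with the interior ball condition: if $G > R$ then some cluster $C$ meeting $\overline{B(0,\rho)}$ has $\diam(C) > R$, and any inclusion of $C$ touching $\overline{B(0,\rho)}$ contains, by (G1), a ball of radius $d$ tangent to its boundary inside $\overline{B(0,\rho)}$, whence $\big|\{y \in B(0,\rho+2d):\ y \in F,\ \diam(C_{y,F}) > R\}\big| \ge \tfrac{4}{3}\pi d^3$. Markov's inequality and stationarity (Remark~\ref{rem_I0_C0}) then give
\begin{equation*}
\PP(G > R) \ \le\ \frac{3\,|B(0,\rho+2d)|}{4\pi d^3}\ \PP\big(\diam(C_{0,F}) > R\big),
\end{equation*}
so that $\E G^p \le C\,\E\diam(C_{0,F})^p < +\infty$. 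Feeding this into Theorem~\ref{thm_homog} yields the a.s.\ weak $H^1_0$-convergence of $u^\eps$ to the solution of~\eqref{target_system} with $A^0$ as in~\eqref{defi_Ahom}.

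I expect the main obstacle to be the geometric construction of $F'$: verifying (G1)-(G2) with deterministic constants requires a careful local analysis near the inserted tubes, the delicate case being an edge with $d(I_\alpha,J_\beta) \ll \eta$, where the tube must simply swallow the pinched gap region while keeping uniform curvature bounds and, crucially, not merging clusters that are only slightly more than $\delta$ apart. The transfer of the moment bound from $C_{0,F}$ to $I_{0,F'}$, sketched above, is the other point needing genuine (though routine) work.
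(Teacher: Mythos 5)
Your proposal is correct and follows essentially the same route as the paper: the paper's own proof is precisely the observation that bridging every gap produces an admissible short $F'$ with $\Ed(F')=\emptyset$, so that \eqref{H2} holds trivially and \eqref{moment_bound_F} follows from the moment bound on $\diam(C_{0,F})$, after which Theorem \ref{thm_homog} applies. You merely spell out details the paper leaves implicit (the uniform separation of distinct bridged clusters, which indeed follows from the $2\delta$-separation clause in (G2), and the Markov/stationarity transfer of the moment bound from $C_{0,F}$ to $I_{0,F'}$), and these verifications are sound.
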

\noindent
Indeed, if we simply bridge all pairs of nodes $I,J$ with $I \leftrightarrow J$, we obtain in this way an admissible short $F'$ of $F$ with $\E \, \diam(I_{0,F'})^p < +\infty$, and with $\Ed(F') = \emptyset$, hence trivially satisfying \eqref{H2}  and  \eqref{moment_bound_F}.  See Figure \ref{exemple_1} for an illustration. 
%Actually, in this special case, as the graph of the short is empty, our proof greatly simplifies, and it is  enough that 
%\begin{equation} \label{moment_simple}
% \E \,  \diam(C_{0,F})^{p}  <+\infty, \quad \text{ for some $p> 3$}. 
% \end{equation}
%The main reason is that when the inclusions are far away from each other, the extension result in Proposition \ref{prop_div_extensions}, which is the crucial ingredient in our homogenization proof, simplifies. Namely, under a moment bound of order $p > 3$ on the diameter of the inclusions, it is possible to show that there exists an  extension bounded in $L^{s'}(U)$ for  $s = \frac{2p}{p-2} < 6$. For brevity, we do not comment further on that.
 Let us point out that the sharper bound 
 $$ \E \,  \exp(\diam(C_{0,F})^\gamma) < +\infty, \quad \forall \gamma > 1 $$
has been shown  to hold for several examples of admissible sets of inclusions, satisfying strong mixing assumptions, below the critical percolation threshold.  We refer to the recent article \cite{DuGloperco} for details, where, moreover, the homogenization of both Laplace and Stokes equation is obtained in this special case by a different method.

\definecolor{gray1}{gray}{0.85}
\definecolor{gray2}{gray}{0.67}
\definecolor{gray3}{gray}{0.55}

\begin{figure}[h!]
    \centering
    \psset{unit=0.85cm}
\begin{pspicture}(10,-2)(3,5)

%case 1
\pscircle[linewidth=1pt,fillstyle=solid,
fillcolor=gray](0.05,0.2) {0.25}
\pscircle[linewidth=1pt,fillstyle=solid,
fillcolor=gray](1,0.75) {0.25}
\pscircle[linewidth=1pt,fillstyle=solid,
fillcolor=gray](0.45,0.60) {0.25}
\pscircle[linewidth=1pt,fillstyle=solid,
fillcolor=gray](-0.5,-1) {0.25}
\pscircle[linewidth=1pt,fillstyle=solid,
fillcolor=gray](0.08,-1) {0.25}
\psline[linewidth=1.5pt,linecolor=white] (1,0.75)  (0.45,0.60)
\psline[linewidth=1.5pt,linecolor=white] (0.05,0.2)  (0.45,0.60)
\psline[linewidth=1.5pt,linecolor=white] (0.08,-1) (-0.5,-1) 
%Case 2
\pscircle[linewidth=1pt,fillstyle=solid,
fillcolor=gray](2.65,-0.3) {0.25}
\pscircle[linewidth=1pt,fillstyle=solid,
fillcolor=gray](3.15,-0.5) {0.25}
\pscircle[linewidth=1pt,fillstyle=solid,
fillcolor=gray](3.8,0.9) {0.25}
%Case 3
\pscircle[linewidth=1pt,fillstyle=solid,
fillcolor=gray](0.05,3.2) {0.25}
\pscircle[linewidth=1pt,fillstyle=solid,
fillcolor=gray](0.1,2.65) {0.25}
\pscircle[linewidth=1pt,fillstyle=solid,
fillcolor=gray](-0.40,2.87) {0.25}
\pscircle[linewidth=1pt,fillstyle=solid,
fillcolor=gray](-0.46,3.41) {0.25}
%Case 4
\pscircle[linewidth=1pt,fillstyle=solid,
fillcolor=gray](3.3,3.3) {0.25}
\pscircle[linewidth=1pt,fillstyle=solid,
fillcolor=gray](3.7,2.9) {0.25}
\pscircle[linewidth=1pt,fillstyle=solid,
fillcolor=gray](3.85,3.45) {0.25}
\pscircle[linewidth=1pt,fillstyle=solid,
fillcolor=gray](2.75,3.3) {0.25}

%case 1 bis
\pscircle[linewidth=1pt,fillstyle=solid,
fillcolor=gray](9.55,0.2) {0.25}
\pscircle[linewidth=1pt,fillstyle=solid,
fillcolor=gray](10.5,0.75) {0.25}
\pscircle[linewidth=1pt,fillstyle=solid,
fillcolor=gray](9.95,0.60) {0.25}
\pscircle[linewidth=1pt,fillstyle=solid,
fillcolor=gray](9,-1) {0.25}
\pscircle[linewidth=1pt,fillstyle=solid,
fillcolor=gray](9.58,-1) {0.25}

%Case 2 bis
\pscircle[linewidth=1pt,fillstyle=solid,
fillcolor=gray](12.15,-0.3) {0.25}
\pscircle[linewidth=1pt,fillstyle=solid,
fillcolor=gray](12.65,-0.5) {0.25}
\pscircle[linewidth=1pt,fillstyle=solid,
fillcolor=gray](13.3,0.9) {0.25}
\psline[linewidth=1.5pt,linecolor=white] (2.65,-0.3) (3.15,-0.5) 
%Case 3 bis
\pscircle[linewidth=1pt,fillstyle=solid,
fillcolor=gray](9.55,3.2) {0.25}
\pscircle[linewidth=1pt,fillstyle=solid,
fillcolor=gray](9.6,2.65) {0.25}
\pscircle[linewidth=1pt,fillstyle=solid,
fillcolor=gray](9.1,2.87) {0.25}
\pscircle[linewidth=1pt,fillstyle=solid,
fillcolor=gray](9.04,3.41) {0.25}
\psline[linewidth=1.5pt,linecolor=white] (0.05,3.2) (0.1,2.65)
\psline[linewidth=1.5pt,linecolor=white] (0.05,3.2) (-0.46,3.41)
\psline[linewidth=1.5pt,linecolor=white] (-0.46,3.41) (-0.4,2.87) 
\psline[linewidth=1.5pt,linecolor=white] (-0.4,2.87) (0.1,2.65)
%Case 4 bis
\pscircle[linewidth=1pt,fillstyle=solid,
fillcolor=gray](12.8,3.3) {0.25}
\pscircle[linewidth=1pt,fillstyle=solid,
fillcolor=gray](13.2,2.9) {0.25}
\pscircle[linewidth=1pt,fillstyle=solid,
fillcolor=gray](13.35,3.45) {0.25}
\pscircle[linewidth=1pt,fillstyle=solid,
fillcolor=gray](12.25,3.3) {0.25}
\psline[linewidth=1.5pt,linecolor=white] (3.3,3.3) (3.7,2.9)
\psline[linewidth=1.5pt,linecolor=white] (3.7,2.9) (3.85,3.45)
\psline[linewidth=1.5pt,linecolor=white] (3.3,3.3) (3.85,3.45)
\psline[linewidth=1.5pt,linecolor=white] (3.3,3.3) (2.75,3.3)

%case 1 bis
\pscircle[linewidth=1pt,fillstyle=solid,
fillcolor=gray](9.55,0.2) {0.25}
\pscircle[linewidth=1pt,fillstyle=solid,
fillcolor=gray](10.5,0.75) {0.25}
\pscircle[linewidth=1pt,fillstyle=solid,
fillcolor=gray](9.95,0.60) {0.25}
\pscircle[linewidth=1pt,fillstyle=solid,
fillcolor=gray](9,-1) {0.25}
\pscircle[linewidth=1pt,fillstyle=solid,
fillcolor=gray](9.58,-1) {0.25}
\psline[linewidth=2pt,linecolor=gray] (10.5,0.75)  (9.95,0.60)
\psline[linewidth=2pt,linecolor=gray] (9.55,0.2)  (9.95,0.60)
\psline[linewidth=2pt,linecolor=gray] (9.58,-1) (9,-1) 
%Case 2 bis
\pscircle[linewidth=1pt,fillstyle=solid,
fillcolor=gray](12.15,-0.3) {0.25}
\pscircle[linewidth=1pt,fillstyle=solid,
fillcolor=gray](12.65,-0.5) {0.25}
\pscircle[linewidth=1pt,fillstyle=solid,
fillcolor=gray](13.3,0.9) {0.25}
\psline[linewidth=2pt,linecolor=gray] (12.15,-0.3) (12.65,-0.5) 
%Case 3 bis
\pscircle[linewidth=1pt,fillstyle=solid,
fillcolor=gray](9.55,3.2) {0.25}
\pscircle[linewidth=1pt,fillstyle=solid,
fillcolor=gray](9.6,2.65) {0.25}
\pscircle[linewidth=1pt,fillstyle=solid,
fillcolor=gray](9.1,2.87) {0.25}
\pscircle[linewidth=1pt,fillstyle=solid,
fillcolor=gray](9.04,3.41) {0.25}
\psline[linewidth=2pt,linecolor=gray] (9.55,3.2) (9.6,2.65)
\psline[linewidth=2pt,linecolor=gray] (9.55,3.2) (9.04,3.41)
\psline[linewidth=2pt,linecolor=gray] (9.04,3.41) (9.1,2.87) 
\psline[linewidth=2pt,linecolor=gray] (9.1,2.87) (9.6,2.65)
%Case 4 bis
\pscircle[linewidth=1pt,fillstyle=solid,
fillcolor=gray](12.8,3.3) {0.25}
\pscircle[linewidth=1pt,fillstyle=solid,
fillcolor=gray](13.2,2.9) {0.25}
\pscircle[linewidth=1pt,fillstyle=solid,
fillcolor=gray](13.35,3.45) {0.25}
\pscircle[linewidth=1pt,fillstyle=solid,
fillcolor=gray](12.25,3.3) {0.25}
\psline[linewidth=2pt,linecolor=gray] (12.8,3.3) (13.2,2.9)
\psline[linewidth=2pt,linecolor=gray] (13.2,2.9) (13.35,3.45)
\psline[linewidth=2pt,linecolor=gray] (12.8,3.3) (13.35,3.45)
\psline[linewidth=2pt,linecolor=gray] (12.8,3.3) (12.25,3.3)

\psline[linewidth=1pt, linestyle= dashed] (-1.5,-1.5)(4.5,-1.5)
%\psline[linewidth=1pt, linestyle= dashed] (-1.5,1.5)(4.5,1.5)
\psline[linewidth=1pt, linestyle= dashed] (-1.5,4.5)(4.5,4.5)
\psline[linewidth=1pt, linestyle= dashed] (4.5,-1.5)(4.5,4.5)
%\psline[linewidth=1pt, linestyle= dashed] (1.5,-1.5)(1.5,4.5)
\psline[linewidth=1pt, linestyle= dashed] (-1.5,-1.5)(-1.5,4.5)

\psline[linewidth=1pt, linestyle= dashed] (8,-1.5)(14,-1.5)
%\psline[linewidth=1pt, linestyle= dashed] (8,1.5)(14,1.5)
\psline[linewidth=1pt, linestyle= dashed] (8,4.5)(14,4.5)
\psline[linewidth=1pt, linestyle= dashed] (14,-1.5)(14,4.5)
%\psline[linewidth=1pt, linestyle= dashed] (11,-1.5)(11,4.5)
\psline[linewidth=1pt, linestyle= dashed] (8,-1.5)(8,4.5)
\end{pspicture} 
    \caption{On the left, all clusters are far away from the others. On the right, groups of inclusions joined by a grey line  form a short $F'$  that verifies $\Ed(F') = \emptyset$.}
    \label{exemple_1}
\end{figure}
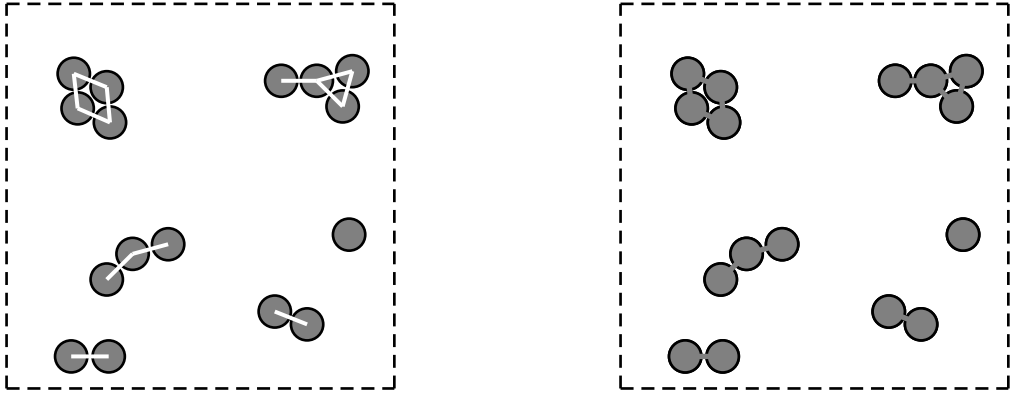

%\mspace
%vi) Our result, stated in $\R^d$ with $d = 3$, can be extended to the case $d=2$ with minor modifications. In such a case, the analogue of the Keller function (see the appendix) leads to a weight $\mu_e = \frac{1}{\sqrt{|e|}}$ and the restriction on $s$ in Assumption \eqref{H2}

\section{Preliminary extension theorems} \label{section_preliminary}
In this whole section, $F$ is an admissible set of inclusions, $U$ is a bounded domain, and  $F^\eps$ is defined as in \eqref{defi:rescaled}. 
We will show how assumptions \eqref{H1} or \eqref{H2} allow to construct suitable extensions of $H^1$ fields given in the inclusions, that is in $F^\eps$,  resp. divergence-free $L^2$ fields given outside the inclusions, that is in $U\setminus F^\eps$. By suitable, we mean that the extension operator will be bounded uniformly in $\eps$. These extension results will be central to the homogenization process. 

\subsection{Extension outside the inclusions}
The two main results of this paragraph are 
\begin{proposition}  \label{prop_extension_H1}
Let $\xi \in \R^3$. Assume  \eqref{H1}. If  $\E \, \diam(I_{0,F})^2 < +\infty$,   
one can find almost surely  $C > 0$ independent of $\eps$ and  a field $\phi^\eps \in H^1_0(U)$ with 
$$ \na \phi^\eps = \xi \quad \text{ in } \:  F^\eps, \quad \eps^{-1} \|\phi^\eps\|_{L^2(U)} +  \|\na  \phi^\eps \|_{L^2(U)} \le C |\xi|.  $$
\end{proposition}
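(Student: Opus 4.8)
The plan is to build $\phi^\eps$ by hand, following the network-approximation philosophy: it will be \emph{affine} on each inclusion, a \emph{channel (Keller-type) function} across each thin neck, and a harmonic-type \emph{extension} on the remaining bulk, with a cutoff near $\pa U$ to land in $H^1_0(U)$. By homogeneity of the statement I may assume $|\xi|=1$. Fix $\eps$ and let $N=N(\eps)$ be minimal with $\eps^{-1}\overline U\subset Q_N$, so that $\eps^3|Q_N|$ stays bounded as $\eps\to 0$; only the small-$\eps$ regime needs checking, there being finitely many inclusions for each fixed $\eps$. By \eqref{H1} applied with $b_{IJe}=\xi\cdot x_I$ there is a family $\{u_I\}_{I\subset Q_N}$ with
\[ \sum_{\substack{e\in\Ed(F_N),\ I\overset{e}{\leftrightarrow} J}}\mu_e\,\gamma_e^2 \;+\; \sum_{I\subset Q_N}|I|\,u_I^2 \;\le\; C(\omega)\,|Q_N|, \qquad \gamma_e:=\xi\cdot(x_I-x_J)+u_I-u_J . \]
On each rescaled inclusion $\eps I\subset U$ I set $\phi^\eps:=v_I$ with $v_I(x):=\xi\cdot(x-\eps x_I)-\eps u_I$, so that $\na v_I=\xi$ and, crucially, $v_I-v_J=-\eps\gamma_e$ is a \emph{constant} (the two affine functions share the gradient $\xi$), with $|\gamma_e|$ exactly the quantity weighted by $\mu_e=|\ln d(I_\alpha,J_\beta)|$ in $\mE$.

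Across each edge $e=[x_{I,\alpha},x_{J,\beta}]$ I work in a region $G_e^\eps$ of size $O(\eps d)$ straddling the segment $\eps[x_{I,\alpha},x_{J,\beta}]$ and put $\phi^\eps:=v_J+w_e$ there, where $w_e$ equals $-\eps\gamma_e$ on the part of $\pa G_e^\eps$ facing $\eps I_\alpha$ and $0$ on the part facing $\eps J_\beta$; since $v_J-\eps\gamma_e=v_I$, this matches the affine definitions on the two neighbouring inclusions. Rescaling by $1/\eps$ turns $G_e^\eps$ into a neck of width $h_e:=d(I_\alpha,J_\beta)\le\delta$ between two $C^2$ surfaces that are parabolic of curvature $a$ near the contact point by (G2) and satisfy an interior ball condition of radius $d$ by (G1). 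The classical test function in local cylindrical coordinates, roughly $w_e\sim -\eps\gamma_e\,\dfrac{(h_e/2+ar^2)-z}{h_e+2ar^2}$ damped by a cutoff at $r\sim d$, has rescaled Dirichlet energy $\lesssim(\mu_e+1)\,\eps^2\gamma_e^2$; hence $\int_{G_e^\eps}|\na\phi^\eps|^2\lesssim\eps^3(\mu_e+1)\gamma_e^2+|\xi|^2|G_e^\eps|$, while $\int_{G_e^\eps}|\phi^\eps|^2$ is $O(\eps^5)$ times quantities of the same type as below. On the bulk $\mO^\eps:=U\setminus(F^\eps\cup\bigcup_e G_e^\eps)$ the prescribed traces — $0$ on $\pa U$, $v_I$ on $\pa(\eps I)$, and $v_I$ or $v_J$ on $\pa G_e^\eps$ — are $O(\eps)$ in an $\eps$-rescaled sense, and once the necks are removed the inclusions are pairwise $\gtrsim\delta\eps$-separated and uniformly regular; a partition of unity at scale $\eps$ (a uniform-in-$\eps$ extension on the rescaled domain) produces $\phi^\eps$ on $\mO^\eps$ with $\|\na\phi^\eps\|_{L^2(\mO^\eps)}^2+\eps^{-2}\|\phi^\eps\|_{L^2(\mO^\eps)}^2\lesssim 1$. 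Finally I multiply by a cutoff equal to $1$ on a neighbourhood of $F^\eps$ and vanishing on $\pa U$, with gradient $O(1/(\eps\delta_0))$ supported in the layer $\{\dist(\cdot,\pa U)<\eps\delta_0\}$: this makes $\phi^\eps\in H^1_0(U)$ without changing it on $F^\eps$ and costs only $O(\delta_0^{-2})$ in $\|\na\phi^\eps\|_{L^2}^2$, since $\|\phi^\eps\|_{L^2(U)}^2=O(\eps^2)$.

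It then remains to sum. The terms $\eps^3\sum_e\mu_e\gamma_e^2$ and $\eps^3\sum_I|I|u_I^2$ are $O(1)$ directly from the displayed consequence of \eqref{H1} together with $\eps^3|Q_N|=O(1)$. The $L^2$-contribution of the inclusions, $\eps^{-2}\sum_I\|v_I\|_{L^2(\eps I)}^2\lesssim\eps^3\sum_{\eps I\subset U}|I|(\diam(I)^2+u_I^2)$, is $O(1)$ because $\eps^3\sum_{\eps I\subset U}|I|\diam(I)^2\le\int_U 1_F(x/\eps)\,\diam(I_{x/\eps,F})^2\,dx\to|U|\,\E\,\diam(I_{0,F})^2<+\infty$ by the spatial ergodic theorem (applied to the stationary field $\omega\mapsto\diam(I_{0,F})^2$) and the hypothesis $\E\,\diam(I_{0,F})^2<+\infty$. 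The remaining unweighted sums, $\eps^3\sum_e\gamma_e^2$ and $|\xi|^2\eps^3\#\Ed(F_N)$, are $O(1)$ because (G1) forces $\#\CC(F_N)\lesssim d^{-3}|Q_N|$ (distinct inclusions are disjoint and each contains a $d$-ball), (G2) forces $\#\Ed(F_N)\lesssim\#\CC(F_N)$, and $\gamma_e^2\lesssim\diam(I)^2+\diam(J)^2+\delta^2+u_I^2+u_J^2$ for an edge, to which the same ergodic and counting estimates apply; this also bounds $\sum_e\|\phi^\eps\|_{L^2(G_e^\eps)}^2$.

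The genuinely delicate step is the neck construction together with the bookkeeping that glues the three regions into a single $H^1$ function: the domains $G_e^\eps$ and the boundary values of $w_e$ must be arranged so that the affine pieces $v_I$, the channel functions, and the bulk extension agree on their overlaps while every local rescaled energy stays bounded, and this has to survive the fact that the inclusions may be arbitrarily large — which is precisely why the pieces on the inclusions are the affine maps $v_I$ rather than constants, and why the moment bound $\E\,\diam(I_{0,F})^2<+\infty$ is needed to control the $L^2$-norm. The capacity/channel estimate itself is classical network approximation; reconciling it with the possibly large-scale, merely $C^2$ geometry of an admissible set of inclusions is where the care goes.
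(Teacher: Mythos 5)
Your construction --- affine pieces $v_I$ on the inclusions with constants $u_I$ chosen through \eqref{H1}, Keller-type bridge functions of energy $\sim \mu_e$ across each gap, cutoff-based gluing in the bulk and near $\pa U$, and the ergodic theorem to control $\eps^3\sum_I |I|\,\diam(I)^2$ --- is essentially the paper's own proof: the paper establishes the general-data version (Proposition \ref{prop_extension_H2}, with Stein/Morrey machinery that becomes trivial for the affine data $\xi\cdot x$) and notes that Proposition \ref{prop_extension_H1} follows by minor modifications, its Step 1/Step 2 construction in the rescaled domain $NU$ coinciding with yours up to the scaling convention. The only imprecision is your claim that the bulk extension costs $O(1)$ outright: gluing the trace $v_I$ across an $\eps$-shell around a possibly large inclusion costs of order $\eps^3 |I|\big(\diam(I)^2+u_I^2\big)$ per inclusion, but these are exactly the sums you already bound via \eqref{H1} and the moment hypothesis, so no ingredient is missing.
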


\begin{proposition}  \label{prop_extension_H2}
Assume  \eqref{H2} with $F' = F$ and $s > 3$. For any $\tilde s >  s$,  there exists $p = p(\tilde{s})$ such that  if 
$$\: \E \, \diam(I_{0,F})^p  < +\infty,$$ 
one can find almost surely  $C > 0$ independent of $\eps$  satisfying:  for all $\varphi^\eps \in W^{1,\tilde{s}}(F^\eps)$, there exists  a field  $\phi^\eps \in H^1_0(U)$ with
$$ \na \phi^\eps = \na  \varphi^\eps \quad \text{ in } \:  F^\eps, \quad \eps^{-1} \|\phi^\eps\|_{L^2(U)} + \|\na  \phi^\eps \|_{L^2(U)} \le C \|\na \varphi^\eps \|_{L^{\tilde{s}}(F^\eps)}  $$
\end{proposition}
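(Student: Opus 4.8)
The plan is to build the extension $\phi^\eps$ in two stages: first construct a rough extension that carries the correct gradient inside each inclusion, then correct it so that it becomes continuous across the gaps (and hence genuinely $H^1$ on $U$), controlling the energy of the correction by the discrete functional $\mE$ via \eqref{H2}. More precisely, on each rescaled inclusion $I^\eps$ one already has the given function $\varphi^\eps$; extending $\varphi^\eps$ from $I^\eps$ into a neighbourhood using the uniform $C^2$-regularity and interior/exterior ball conditions from (G1) gives, after rescaling, a bounded extension operator $W^{1,\tilde s}(I^\eps) \to W^{1,\tilde s}(\tilde I^\eps)$ on a slightly enlarged $\tilde I^\eps$, with norms scaling correctly in $\eps$. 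The issue is that these local extensions do not match up near the gaps between neighbouring inclusions. In each gap around an edge $e = [x_{I,\alpha}, x_{J,\beta}]$, the geometry \eqref{paraboloid} is that of a thin neck between two paraboloids of aperture $\sim d(I_\alpha,J_\beta)$; the standard ``capacity of a thin neck'' computation (as in the network approximation literature, cf. \cite{BeKoNo}) shows that interpolating between a constant $u_I$ on one side and $u_J + b_{IJe} - b_{JIe}$ on the other — where the $b$'s encode the boundary traces of the local extension $\varphi^\eps$ along the gap — costs Dirichlet energy $\lesssim \mu_e^{-1}|u_I - u_J + b_{IJe} - b_{JIe}|^2$; wait, more precisely the relevant energy is $\sim \frac{1}{|\ln d(I_\alpha,J_\beta)|}|\cdots|^2$, and since $\mu_e = \ln|d| < 0$ is large in absolute value when inclusions are close, one writes the weight as $|\mu_e|$ and the neck energy as $\lesssim \frac{1}{|\mu_e|}|\cdots|^2$; but in $\mE$ the weight appears as $\mu_e$ multiplying $|b_{IJe}-b_{JIe}+u_I-u_J|^2$, so the matching of conventions has to be done carefully (I expect $\mu_e$ in \eqref{defi:En} to really mean $|\ln d|$, so that large weight penalizes large potential jumps across close inclusions, and the neck construction realizes energy comparable to $\mu_e |u_I - u_J + \cdots|^2$ when one is forced to change the constant; the point is the two-sided comparability of neck capacity with the graph weight).

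The key steps, in order: (1) fix the local $W^{1,\tilde s}$-extensions on the enlarged inclusions $\tilde I^\eps$, producing a field $\psi^\eps$ defined near $F^\eps$ with $\na\psi^\eps = \na\varphi^\eps$ on $F^\eps$ and $\|\na\psi^\eps\|_{L^{\tilde s}} \lesssim \|\na\varphi^\eps\|_{L^{\tilde s}(F^\eps)}$; read off the boundary data $b_{IJe}$ (values of $\psi^\eps$ near the gap endpoints, normalized by subtracting a chosen base value per inclusion). (2) Apply \eqref{H2} — in the reformulated form \eqref{reformulation_H2} — to the family $\{b_{IJe}\}$ to obtain constants $\{u_I\}$ with $\mE(F^\eps, \{u_I\}, \{b_{IJe}\}) \lesssim |U|\big(\frac{1}{|U|}\sum |b_{IJe}|^s\big)^{2/s}$, up to the $\eps$-scaling of volumes. (3) In each inclusion set $\phi^\eps := u_I$ (constant, so $\na\phi^\eps = 0 = $ nothing — but we need $\na\phi^\eps = \na\varphi^\eps$ there, so actually $\phi^\eps := \psi^\eps + (u_I - \text{base value})$ inside, i.e. the local extension shifted by a constant so that neighbouring pieces are reconciled); in each gap neck, interpolate between the two constant shifts using the thin-neck profile, with energy controlled by $\mu_e|b_{IJe}-b_{JIe}+u_I-u_J|^2$; outside all necks and inclusions, cut off to zero near $\pa U$. (4) Sum the energies: the interior pieces give $\|\na\varphi^\eps\|_{L^{\tilde s}}^2$ (after Hölder on $F^\eps$, converting $L^{\tilde s}$ to $L^2$, which is where the moment bound $\E\diam(I_{0,F})^p < +\infty$ enters — controlling $\sum |\tilde I^\eps|^{1-2/\tilde s}$ by the ergodic theorem applied to the stationary field $\diam(I_{y,F})$, forcing $p = p(\tilde s)$ large enough, roughly $p \sim \tilde s/(\tilde s - ?) $ times a dimensional factor); the neck pieces give $\mE \lesssim (\sum|b_{IJe}|^s)^{2/s}$, and $\sum|b_{IJe}|^s$ is bounded by $\sum \|\na\psi^\eps\|^s$ over enlarged inclusions, again comparable to $\|\na\varphi^\eps\|_{L^{\tilde s}}^s$ after a trace estimate and the moment bound; a final discrete-to-continuous Hölder inequality (the exponent $2/s < 1$) converts this back to the desired bound $\lesssim \|\na\varphi^\eps\|_{L^{\tilde s}(F^\eps)}^2$. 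The $L^2$ bound on $\phi^\eps$ itself follows from Poincaré on $U$ once the gradient is controlled and $\phi^\eps \in H^1_0(U)$, giving the factor $\eps^{-1}$ from the rescaling of the Poincaré constant at the scale of an inclusion — or rather from $\phi^\eps$ vanishing near $\pa U$, a global Poincaré suffices without the $\eps^{-1}$, so the $\eps^{-1}\|\phi^\eps\|_{L^2}$ bound should come from a local Poincaré/Sobolev on each cell of size $\eps$; this point needs care but is routine.

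The main obstacle I expect is step (4): honestly tracking how the $L^{\tilde s}$ (not $L^2$, and not $L^s$) norm of $\na\varphi^\eps$ controls both (a) the quantity $\sum|b_{IJe}|^s$ appearing on the right of \eqref{reformulation_H2}, via trace inequalities on the enlarged inclusions with constants depending only on $d$ (from (G1)) but with the summation over inclusions requiring a moment bound on $\diam(I_{0,F})$ — this is exactly why one must take $\tilde s > s$ and why $p = p(\tilde s)$ must be chosen large; and (b) the conversion of the resulting $(\sum|b_{IJe}|^s)^{2/s}$ term, which is a discrete $\ell^s$-type norm to the $2/s$ power, back into the continuum $\|\na\varphi^\eps\|_{L^{\tilde s}(F^\eps)}^2$, which requires Hölder in the form $(\sum a_k)^{2/s} \le (\#)^{1-2/s\cdot(\text{something})}(\sum a_k^{?})^{?}$ — and the number-of-inclusions factor must be absorbed by the ergodic theorem and the moment bound, which is the technical reason for the restriction $s < 6$ in the general case. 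Getting the exponent bookkeeping consistent — so that a single $p = p(\tilde s)$ works — is the delicate part; the thin-neck capacity estimate itself, though it is the conceptual heart, is classical and I would cite \cite{BeKoNo} and the ball conditions in (G1)–(G2) for it.
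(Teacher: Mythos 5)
Your overall architecture is the same as the paper's: local extension of $\varphi^\eps$ from each inclusion, interpolation across each thin gap by a Keller-type bridge profile with energy $\sim \mu_e\,|b_{IJe}-b_{JIe}+u_I-u_J|^2$, choice of the constants $\{u_I\}$ via the reformulation \eqref{reformulation_H2} of \eqref{H2}, and a final H\"older/ergodic-theorem/moment-bound bookkeeping (your $\mu_e$ vs.\ $\mu_e^{-1}$ hesitation resolves to the correct convention, since the relative capacity of the gap is $\sim|\ln d|$). However, there is a genuine gap in your step (3), which is precisely where the paper's key technical device lives. As stated, you glue a neck function that interpolates \emph{between two constants} to the non-constant function $\psi^\eps+u_I-\text{base}$ inside the inclusion: this is discontinuous across $I_\alpha$, so the glued field is not in $H^1$. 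The natural repair — interpolating the full extended functions, $\phi_e=w_e(\tilde\varphi_I+u_I)+(1-w_e)(\tilde\varphi_J+u_J)$ as the paper does — produces the cross term $\int_{F_e}|\na w_e|^2\,|\tilde\varphi_I-\tilde\varphi_J+u_I-u_J|^2$, and if you bound the oscillation of $\tilde\varphi_I$ in the gap merely in sup norm, it gets multiplied by $\int|\na w_e|^2\sim\mu_e$, i.e.\ the logarithmic weight reappears against \emph{interior} norms of $\na\varphi^\eps$; summing over edges would then require a logarithmic moment bound on the gaps, which is exactly what the proposition is designed to avoid. The paper kills this term by combining Morrey's embedding $W^{1,s}(\R^3)\hookrightarrow C^{0,1-3/s}$ (this is where $s>3$ enters, and also what makes the pointwise values defining $b_{IJe}$ meaningful) with the \emph{weighted} estimate on the bridge function, $\int_{F_e}|\na w_e|^2|x-x_{I,\alpha}|^{2\gamma}\le C_\gamma$ uniformly in the gap width (Lemma \ref{lemma::bridge function}), so that the H\"older decay of $\tilde\varphi_I-\varphi_I(x_{I,\alpha})$ exactly compensates the divergence of $|\na w_e|^2$. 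Your sketch never identifies where $s>3$ is used nor this weighted estimate, and without it the neck bookkeeping does not close.

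Two lesser points. First, your fallback for the zeroth-order bound is wrong: a global Poincar\'e on $U$ only gives $\|\phi^\eps\|_{L^2(U)}\lesssim\|\na\phi^\eps\|_{L^2(U)}$, which is weaker by a factor $\eps^{-1}$ than the claimed estimate; in the paper the bound $\eps^{-1}\|\phi^\eps\|_{L^2}\lesssim\|\na\varphi^\eps\|_{L^{\tilde s}}$ comes from proving the full $H^1(NU)$ estimate \eqref{rescaled_inequality_extension} at scale $N=\eps^{-1}$ — the $L^2$ part being controlled directly by the construction through the term $\sum_I|I|\,|u_I|^2$ of $\mE$ and the local $L^2$ bounds on the bridge pieces — and then scaling back, so the amplitude $O(\eps)$ is built in rather than deduced from a Poincar\'e inequality (your "local Poincar\'e on cells of size $\eps$" also does not apply, since inclusion diameters are only bounded in moment). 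Second, after gluing the neck pieces one still has to correct the extension \emph{inside} the inclusions away from the gaps (the paper's $\phi_{N,2}$, built with cutoffs $\chi_I$ whose supports interact with neighbouring inclusions only through the gap regions where the field already vanishes); this is routine given (G1)--(G2) but is part of making "cut off to zero near $\pa U$" and the matching on each whole inclusion rigorous.
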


\noindent
We will focus on the proof of the latter proposition, as the former requires only minor modifications. 

\mspace
\begin{proof}
In all the proof, the realization $\omega$ is fixed and does not show up in the notations. We will use the following Poincaré-Wirtinger inequality: there exists $r >0$ such that for all  $t \in [1,+\infty)$, one can find $C > 0$ satisfying: 
\begin{equation} \label{PW}
 \forall I \in \CC(F), \quad  \forall u \in W^{1,t}(I), \quad  \|u - (u)_I\|_{L^t(I)}  \le C \, \diam(I)^r \|\na u \|_{L^t(I)},
\end{equation}
with $(u)_I = \frac{1}{|I|} \int_I u$. A main point here is that the constant is bounded by a power of the diameter. This inequality is known to be true with $r=1$ for convex domains,  and more generally for star-shaped domains, see \cite[chapter 12]{Leoni}. We indicate in Appendix \ref{appendix_PW} how to show that this inequality holds with $r = 12$. Obviously, in the special case where $u(x) = \xi \cdot x$, relevant to Proposition \ref{prop_extension_H1}, the exponent $r=1$ is enough.

\mspace
Let 
$$N := \eps^{-1}, \quad \tilde{F}_N := N F^\eps= \bigcup_{\substack{I \subset F \cap N U, \\ d(I,N\pa U)>\delta_0}} I$$ 
the set of all the connected components of $F$ included in $N U$ and $\delta_0$ far from the boundary. By a scaling argument, it is enough to prove that there exists a constant $C>0$ independent of $N$ such that for any $\varphi \in W^{1,\tilde{s}}(\tilde{F}_N)$, one can find $\phi_N \in H^1_0(N U)$ with
\begin{equation} \label{rescaled_inequality_extension}
\na \phi_N= \na \varphi \text{ in } \:  \tilde{F}_N, \quad ||\phi_N||_{H^1(N U)} \leq C N^{\frac{3\tilde{s}-6}{2\tilde{s}}} ||\na \varphi||_{L^{\tilde{s}}(\tilde{F}_N)} .
\end{equation}
Let $\varphi \in W^{1,\tilde{s}}(\tilde{F}_N)$ and denote $\varphi_I:=\varphi|_{I}$ for any $I \in \CC(\tilde{F}_N)$. The proof is split into two main steps: 

\begin{enumerate}
    \item Given an arbitrary family $(u_I)_{I  \in \CC(\tilde{F}_N)}$, and any    $e=[x_{I,\alpha}, x_{J,\beta}] \in \Ed(\tilde{F}_N)$, we build a local extension $\phi_e$, defined  in a neighborhood $V_e$ of $e$ that contains $I_\alpha$ and $J_\beta$,  such that 
\begin{equation} \label{prop_phi_e_1}
\phi_e\vert_I  = \varphi_I - \dashint_I \varphi_I  + u_I, \quad \phi_e\vert_J = \varphi_J - \dashint_J \varphi_J + u_J 
\end{equation}
\begin{equation} \label{prop_phi_e_2}
\begin{aligned}
\| \phi_e\|^2_{H^1(V_e)} &  \le C \mu_e |\varphi_I(x_{I,\alpha}) - \dashint_I \varphi_I  - \varphi_J(x_{J,\beta}) +  \dashint_J \varphi_J  +  u_I - u_J|^2 + C ( |u_I|^2 + |u_J|^2)  \\ 
&  + C (\diam(I)^{2r} \|\na \varphi_I\|^2_{L^2(I)}  +  \diam(J)^{2r}  \|\na \varphi_J\|_{L^2(J)}^2) \\
& +  C (\diam(I)^{2r} \|\na \varphi_I\|^2_{L^s(I)}  +  \diam(J)^{2r}  \|\na \varphi_J\|_{L^s(J)}^2) 
\end{aligned}
\end{equation}
    
    \item We show that for a proper choice of the family $(u_I)$ and with the help of the previous local extensions, there exists a global extension $\phi^N$ satisyfing \eqref{rescaled_inequality_extension}. 
\end{enumerate}

\mspace
{\em Step 1}. Let $e=[x_{I,\alpha}, x_{J,\beta}] \in \Ed(\tilde{F}_N)$.  Clearly, to show the existence of  $\phi_e$ satisfying  \eqref{prop_phi_e_1}-\eqref{prop_phi_e_2}, one can restrict to the case where 
\begin{equation} \label{mean_zero_I}
\dashint_I \varphi_I = 0  \quad \forall I, \quad \text{ so that, for all $t \ge 2$,}  \quad \|\varphi_I\|_{W^{1,t}(I)} \le  C  \, \diam(I)^r \,   \|\na \varphi_I\|_{L^t(I)} 
\end{equation}
%Indeed, the general case follows by noticing that 
%$$ \big|\varphi_I(x_{I,\alpha}) - \dashint_I \varphi_I\big| +  \big|\varphi_J(x_{J,\alpha}) - \dashint_J \varphi_J\big| \: \le \:  C \big( \|\na \varphi_I\|_{L^s(I)} + \|\na \varphi_J\|_{L^s(J)}\big)  $$
%by Sobolev imbedding (as $s > 3$).  
thanks to \eqref{PW}. Let now  $P_\alpha$ and $P_\beta$  two paraboloids that enclose $I_\alpha$ and $J_\beta$. By assumption (G2), for appropriate local cylindrical coordinates $(r,\theta,z)$ centered at $\frac{x_{I,\alpha} + x_{J,\beta}}{2}$ and of axis along $e$, we can write
 \begin{equation*} 
P_\alpha  =  \Big\{ z \ge  \frac{|e|}{2} +  a r^2   \Big\}, \quad  P_\beta = \Big\{ z \le - \frac{|e|}{2} - a r^2  \Big\}.
\end{equation*}
By Lemma \ref{lemma::bridge function}, given $d > 0$ there exists a function $w_{e} \in H^1(P_\alpha \cup P_\beta \cup F_{e}(d))$ where in the same system of local coordinates: 
$$F_e(d):= \Big\{ r^2 \leq  d^2, \quad a r^2 + \frac{|e|}{2}  \ge z \ge -\frac{|e|}{2}  -a r^2  \Big\}$$
and where 
\begin{align*}
 & 0 \leq w_e\leq 1, \quad  w_{e}|_{P_\alpha}=1, \quad  w_e|_{P_\beta}=0, \\
    & \int_{F_{e}(d)} |\na w_e|^2 \dx \leq C \mu_e, \quad  \int_{F_{e}(d)} |\na w_e|^2 |x-x_{I,\alpha}|^{2 \gamma} \dx \leq C_\gamma \quad \forall \gamma > 0.
\end{align*}
We take $d > 0$ large enough so that $I_\alpha \cup J_\beta$ lies in the interior of $P_\alpha \cup P_\beta \cup F_{e}(d)$.

%Moreover,  the constant $d$ can be chosen in such a way that denoting 
%\begin{equation} \label{defi_Ve}
% V_e = I_\alpha \cup J_\beta \cup F_e(d) 
% \end{equation}
%we have $\inf_{e \neq e'} \dist(V_e, V_{e'}) > 0$.  

\mspace
Now, using the Stein extension operator from $W^{1,t}(I)$ to $W^{1,t}(\R^3)$,  see  \cite[Chapter 6]{stein1970singular}, one can find an extension $\tilde{\varphi}_I$ of $\varphi_I$  on $\R^3$ such that for all $t \in [2,\tilde{s}]$, 
\begin{equation}
    || \tilde{\varphi}_I||_{W^{1,t}(\R^3)} \leq c \,  ||\varphi_I||_{W^{1,t}(I)} \le c' \, \diam(I)^r ||\na \varphi_I||_{L^t(I)}  \ \forall I \in \CC(F) 
    \label{stein_operator_inequality}
\end{equation}
We remark that the analysis in \cite[Chapter 6]{stein1970singular} provides constants $c$ and $c'$ that are independent of the size of the inclusions. This is however not a crucial point here, as we could handle constants diverging polynomially in the diameter of the inclusions.  Eventually, we define 

\begin{equation}
    \phi_e := w_e (\tilde{\varphi}_I + u_I) + (1-w_e) (\tilde{\varphi}_J + u_J) 
    \label{definition relevement local u_IJ}
\end{equation}
that we consider as a function  of $W^{1,s}(V_e)$, with
\begin{equation} \label{defi_tilde_Ve} 
 V_e \: := \: P_\alpha \cup P_\beta  \cup F_{e}(d) \cap \{ |z| \le R \}
 \end{equation}
where $R$ is large enough so that  $I_\alpha \cup J_\beta$ lies in the interior of $V_e$. The intersection with   $\{ |z| \le R \}$ is just here to make $V_e$ bounded.  Clearly, \eqref{prop_phi_e_1} is satisfied. We then compute

\begin{align*}
    \int_{F_{e}(d)} |\na \phi_e|^2 \dx 
    & \leq  \int_{F_{e}(d)} |\na w_e (\tilde{\varphi}_I+u_I -\tilde{\varphi}_J - u_J ) + w_e \na \tilde{\varphi}_I + (1-w_e)  \na \tilde{\varphi}_J |^2 \dx.
\end{align*}
Hence, 
\begin{align*}
   \int_{F_{e}(d)} |\na \phi_e|^2 \dx  \le & \:    C \, \Big( \int_{F_{e}(d)} |\na w_e|^2 |\tilde{\varphi}_I+u_I -\tilde{\varphi}_J - u_J|^2 \dx + \int_{F_{e}(d)} |\na \tilde{\varphi}_I|^2 + |\na \tilde{\varphi}_J|^2 \dx \Big) \\
   \leq & \:  C'   \, \Big(  \int_{F_{e}(d)} |\na w_e|^2 |\varphi_I(x_{I,\alpha})+u_I -\varphi_J(x_{J,\beta}) - u_J|^2 \dx \\
  + & \:  \int_{F_{e}(d)} |\na w_e|^2 |\tilde{\varphi}_I - \varphi_I(x_{I,\alpha}) + \varphi_J(x_{J,\beta}) -\tilde{\varphi}_J |^2 \dx  \\
+ & \diam(I)^{2r} ||\na \varphi_I||^2_{L^2(I)} + \diam(J)^{2r} ||\na \varphi_J||^2_{L^2(J)} \Big).
\end{align*}
Thanks to Morrey's inequality $W^{1,s}(\R^3) \xhookrightarrow{} C^{\gamma}(\R^3)$,  $\gamma=1-\frac{3}{s} \in (0,1)$, one can write   $$|\tilde{\varphi}_I (x) - \varphi_I(x_{I,\alpha})| \leq C  |x-x_{I,\alpha}|^\gamma ||\na \tilde{\varphi}_I ||_{L^s(\R^3)}   \leq C' |x-x_{I,\alpha}|^\gamma  \diam(I)^r ||\na \varphi_I ||_{L^s(I)}$$ 
for any $x$ in the gap $F_{e}(d)$. Thus, we have the following  

\begin{align*}
& \int_{F_{e}(d)} |\na w_e|^2 |\tilde{\varphi}_I - \varphi_I(x_{I,\alpha}) + \varphi_J(x_{J,\beta}) -\tilde{\varphi}_J |^2 \dx \\
 \leq  & \,  C  \, \diam(I)^{2r} ||\na \varphi_I ||_{L^s(I)}^2  \int_{F_{e}(d)} |\na w_e|^2 |x-x_{I,\alpha}|^{2\gamma} \dx  \\
  +     & \,  C \, \diam(J)^{2r} ||\na \varphi_J ||_{L^s(J)}^2  \int_{F_{e}(d)} |\na w_e|^2 |x-x_{J,\beta}|^{2\gamma} \dx \\
 \leq &  \,  C' \,  \big(  \diam(I)^{2r}  ||\na \varphi_I ||_{L^s(I)}^2  + \diam(J)^{2r} ||\na \varphi_J ||_{L^s(J)}^2 \big). 
  \end{align*}
 Finally, combining the previous inequalities  entails
\begin{equation*}
\begin{aligned}
    \int_{F_{e}(d)} |\na  \phi_e|^2 \dx & \le  C \Big(   \mu_e   |\varphi_I(x_{I,\alpha})+u_I -\varphi_J(x_{J,\beta}) - u_J|^2 \\
& \hspace{1cm} + \diam(I)^{2r}   ||\na \varphi_I ||_{L^2(I)}^2 + \diam(J)^{2r}  ||\na \varphi_J||_{L^2(J)}^2 \\ 
& \hspace{1cm} + \diam(I)^{2r}   ||\na \varphi_I ||_{L^s(I)}^2 + \diam(J)^{2r}  ||\na \varphi_J||_{L^s(J)}^2 \Big).
\end{aligned}
\label{gradient of u_ij in the contact zone}
\end{equation*}
It is even simpler to show that 
\begin{equation*}
   \int_{V_e \setminus F_{e}(d)} |\na  \phi_e|^2 \dx +    \int_{V_e} | \phi_e|^2 \dx \le C \Big(   |u_I|^2 + |u_J|^2 + \diam(I)^{2r}  ||\na \varphi_I ||_{L^2(I)}^2 + \diam(J)^{2r} ||\na \varphi_J||_{L^2(J)}^2 \Big)
\label{ u_ij in the contact zone}
\end{equation*}
which concludes the derivation of \eqref{prop_phi_e_2}, and  the first step. 

\bspace
{\em Step 2}. We now explain how to construct a global extension $\phi_N \in H^1_0(N U)$ with 
\begin{equation} \label{prop_phi_eps}
\phi_N\vert_I = \varphi_I - \dashint_I \varphi_I  + u_I,  \quad \forall I \in \CC(\tilde{F}_N).
\end{equation}
 For all  $e=[x_{I,\alpha}, x_{J,\beta}] \in \Ed(\tilde{F}_N)$, we first introduce a function $\chi_e \in C^\infty_c(\R^3)$, with values in $[0,1]$, satisfying 
\begin{itemize}
\item $\chi_e = 1$ in a neighborhood of $I_\alpha \cup J_\beta$
\item $\chi_e = 0$ in a neighborhood of  $\R^3 \setminus V_e$, where $V_e$ was introduced in \eqref{defi_tilde_Ve}
\item $\chi_e = 0$ in a $\delta_0/2$ neighborhood of $N\pa U$
\item the supports of $\chi_e$ and $\chi_{e'}$ are disjoint for all $e \neq e' \in \Ed(\tilde{F}_N)$
\item $|\na \chi_e| \le C$ for some constant $C$ that is uniform in $e$ and $N$.
\end{itemize}
Existence of such functions is easily deduced from our geometric assumptions (G1)-(G2). We now set 
$$ \phi_{N,1} :=  \sum_{e \in  \Ed(\tilde{F}_N)} \chi_e \phi_e.$$ 
By our choice of functions $\chi_e$ and by property \eqref{prop_phi_e_1},  one has $\phi_{N,1} \in H^1_0(N U)$, and  
\begin{equation} \label{prop_phi_eps_1} 
\phi_{N,1}\vert_{I_\alpha} = \varphi_I - \dashint_I \varphi_I  + u_I,  \quad \forall I \in \CC(\tilde{F}_N), \quad \forall \alpha = 1, \dots, N_I. 
\end{equation}
Moreover, by estimate \eqref{prop_phi_e_2}, 
\begin{equation*}
\begin{aligned} 
& \| \phi_{N,1}\|_{H^1(NU)}^2 \\
&  \le C  \sum_{e = [x_{I,\alpha}, x_{J,\beta}] \in  \Ed(\tilde{F}_N)}  \Big(  \, \mu_e \, \big|\varphi_I(x_{I,\alpha}) - \dashint_I \varphi_I  - \varphi_J(x_{J,\beta}) +  \dashint_J \varphi_J  +  u_I - u_J\big|^2  +   |u_I|^2 + |u_J|^2 \\ 
&\hspace{4cm} +  \, \diam(I)^{2r}  \|\na \varphi_I\|^2_{L^2(I)}  + \,   \diam(J)^{2r}  \|\na \varphi_J\|_{L^2(J)}^2 \\
& \hspace{4cm} +  \,  \diam(I)^{2r}  \|\na \varphi_I\|^2_{L^s(I)}  + \, \diam(J)^{2r}  \|\na \varphi_J\|_{L^s(J)}^2  \Big)
\end{aligned}
\end{equation*}
It remains to  construct some $\phi_{N,2}$ satisfying 
\begin{equation} \label{prop_phi_eps_2}
\phi_{N,2}\vert_{I} = \psi_I, \quad \psi_I :=  \varphi_I - \dashint_I \varphi_I  + u_I - \phi_{N,1},  \quad \forall I \in \CC(\tilde{F}_N),
\end{equation}
in order for $\phi_N := \phi_{N,1} + \phi_{N,2}$ to satisfy \eqref{prop_phi_eps}. By \eqref{prop_phi_eps_1}, $\psi_I$ is zero on $I_\alpha$ for all $I \in \CC(\tilde{F}_N)$, for all   $1 \le \alpha \le N_I$.  Thanks to this property and (G1)-(G2), one can find a constant $\nu > 0$ independent of $I$ or $N$ such that for all $I$, there exists $\tilde{\psi}_I \in H^1(\R^3)$ satisfying
$$  \tilde{\psi}_I\vert_I = \psi_I, \quad \|\tilde{\psi}_I\|_{H^1(\R^3)} \le C \|\psi_I\|_{H^1(I)} $$
and for all $(J,\beta)$ with $e = [x_{I,\alpha}, x_{J,\beta}] \in \Ed(\tilde{F}_N)$, for the same local coordinates  around the edge $e$ as seen before
$$  \tilde{\psi}_I = 0 \quad \text{on } \: J_{\beta,\nu} := J_\beta \cap \{|z| \le \nu \}. $$
Now, for each $I \in  \CC(\tilde{F}_N)$, we introduce $\nu' > 0$ and a smooth  function $\chi_I$ which is $1$ in a $\nu'$-neighborhood of $I$ and $0$ outside a  $2\nu'$-neighborhood of $I$. Thanks to our geometric assumptions, by taking $\nu'$ small enough (but independent of $I$ and $N$), we can ensure that for all $J \neq I$ connected by an edge $e \in \Ed(\tilde{F}_N)$, 
$$\text{Supp}(\chi_J) \cap I \: \subset \:  \cup_{1 \le \alpha \le N_I} \: I_{\alpha,\nu}.$$ 
We finally set 
$$  \phi_{N,2} = \sum_{I \in \CC(\tilde{F}_N)} \chi_I \,  \tilde{\psi}_I.$$
The keypoint in the definition of $\phi_{N,2}$ is that for a given $I$, and for any $J \neq I$, the term $\chi_J \, \tilde{\psi}_J$  is zero on $I$: indeed, for all $x \in I$,  either $\chi_J(x) = 0$, or $\chi_J(x) \neq 0$ which implies that $x \in \cup_\alpha I_{\alpha,\nu}$  so that 
$\tilde{\psi}_J(x) = 0$. Hence, \eqref{prop_phi_eps_2} is satisfied, as expected. Moreover, it is easily seen that  
\begin{align*}
 \| \phi_{N,2}\|_{H^1(N U)}^2  & \le  C \Big( \| \phi_{N,1}\|_{H^1(N U)}^2 \: + \: \sum_{I \in \CC(\tilde{F}_N)} \Big( \diam(I)^{2r} \|\na \varphi_I\|_{L^2(I)}^{2r} \, + \, | I | \, |u_I|^2 \Big) \Big) 
 \end{align*}  
 so that eventually 
\begin{equation*}
\begin{aligned} 
& \| \phi_N\|_{H^1(N U)}^2 \\
& \le C  \sum_{e = [x_{I,\alpha}, x_{J,\beta}] \in  \Ed(\tilde{F}_N)} \Big(  \, \mu_e \, \big|\varphi_I(x_{I,\alpha}) - \dashint_I \varphi_I  - \varphi_J(x_{J,\beta}) +  \dashint_J \varphi_J  +  u_I - u_J\big|^2 \\
&  + C  \sum_{I \in \CC(\tilde{F}_N}  | I | \, |u_I|^2  +  \sum_{I \in \CC(\tilde{F}_N)}   \diam(I)^{2r} (  \|\na \varphi_I\|^2_{L^2(I)} +  \|\na \varphi_I\|^2_{L^s(I)} )  
\end{aligned}
\end{equation*}

\mspace
The final step of the proof is to show that for a proper choice of the family $(u_I)_{I \in \CC(\tilde{F}_N)}$, $\phi_N$ satisfies the bound \eqref{rescaled_inequality_extension}. This is done using assumption \eqref{H2}. 
Namely, we denote $b_{IJe} := \varphi_I(x_{I,\alpha})- \dashint_I \varphi_I$, where $e = [x_{I,\alpha}, x_{J,\beta}] \in \Ed(\tilde{F}_N)$. Remembering Definition \ref{defi:En}, one has clearly 
\begin{equation*}
\begin{aligned} 
\| \phi_N\|_{H^1(NU)}^2 & \le C \, \mE\Big(\tilde{F}_N, \{u_I\}, \{b_{IJe}\}\Big)  \: + \:  C  \sum_{I \in \CC(\tilde{F}_N)}  \diam(I)^{2r} (  \|\na \varphi_I\|^2_{L^2(I)} +  \|\na \varphi_I\|^2_{L^s(I)} )   \\
& \le C \, \mE\Big(\tilde{F}_N, \{u_I\}, \{b_{IJe}\}\Big)  \: + \:  C  \sum_{I \in \CC(\tilde{F}_N)}  \diam(I)^{2r} (|\, I \,|^{\frac{\tilde{s}-2}{\tilde{s}}} + | \, I \, |^{\frac{2(\tilde{s} - s)}{s \tilde{s}}})  
  \|\na \varphi_I\|^2_{L^{\tilde{s}}(I)}   \\
& \le C \, \mE\Big(\tilde{F}_N, \{u_I\}, \{b_{IJe}\}\Big) \:   + \:  C'   \, \Big( \sum_{I \in \CC(\tilde{F}_N)}  \diam(I)^{\frac{2r\tilde{s}}{\tilde{s}-2}} |\, I \,|  \Big)^{\frac{\tilde{s}-2}{\tilde{s}}}   \|\na \varphi\|_{L^{\tilde{s}}(\tilde{F}_N)}^2  
\end{aligned}
\end{equation*}
Now, taking $N' = 2 N \sup_{x \in U} |x| $, one has $N U \subset Q_{N'}$ and $|Q_{N'}| = C |Q_N|$ for a constant $C$ independent of $N$. Furthermore, we can write $F_{N'} = \tilde{F}_N \cup G$, where the union is disjoint and $Gr(F_{N'})$ is deduced from $Gr(\tilde{F}_N)$ by the addition of nodes and edges. Using the property (\ref{prop_E_1}) proved in Section  \ref{section_disc_assumptions}, we have 

$$\mE\Big(\tilde{F}_N, \{u_I\}, \{b_{IJe}\}\Big) \leq  \mE\Big(F_{N'}, \{ \bar{u}_I\}, \{\bar{b}_{IJe}\}\Big)$$
 for any extensions $\{\overline{u}_I\}, \{\overline{b}_{IJe}\}$ of $\{u_I\},  \{b_{IJe}\}$. We make the choice $\bar{b}_{IJe} = 0$ if $e \in \Ed(F_{N'}) \setminus \Ed(\tilde{F}_N) $.
Now, using property \eqref{H2}  (in the form mentioned in  Remark i)  after Theorem \ref{thm_homog}), there exists, almost surely, a family $\{\bar{u}_I\}_{I \in \CC(F_{N'})}$ such that 

\begin{align*}
    \mE\Big(F_{N'}, \{ \bar{u}_I\}, \{\bar{b_{IJe}}\}\Big) & \leq M |Q_{N'}|\Big( \frac{1}{|Q_{N'}|} \sum_{I,J \in \CC(F_{N'})} \sum_{\substack{e \in \Ed(F_{N'}), \\ I \overset{e}{\leftrightarrow} J}} |\bar{b}_{IJe}|^s \Big)^{2/s}\\
    & \le M' |Q_{N}|\Big( \frac{1}{|Q_N|} \sum_{I,J \in \CC(\tilde{F}_N)} \sum_{\substack{e \in \Ed(\tilde{F}_N), \\ I \overset{e}{\leftrightarrow} J}} |b_{IJe}|^s \Big)^{2/s}.
\end{align*}
Using one last time the Morrey injection yields
$$|b_{IJe}|^s = |\varphi_I(x_{I,\alpha})- \varphi_I(x_I)|^s \leq |x_I - x_{I,\alpha}|^{s-3} \, \| \na \tilde \varphi_I \|^s_{L^s(\R^3)} \le C  \diam(I)^{s-3+rs} ||\na \varphi_I||_{L^s(I)}^s. $$
Setting $u_I  = \overline{u}_I$ for $I \in \CC(\tilde{F}_N)$, we get,  back to  $\phi_N$: 
\begin{align*}
   ||\phi_N||^2_{H^1(N U)} & \leq M' |Q_{N}| \Big( \frac{1}{|Q_{N}|} \sum_{I,J \in \CC(\tilde{F}_N)}   \diam(I)^{s-3+rs}  ||\na \varphi_I||_{L^s(I)}^s \Big)^{\frac{2}{s}} \\ 
   & + C' \, \Big( \sum_{I \in \CC(\tilde{F}_N)} \diam(I)^{\frac{2r \tilde{s}}{\tilde{s}-2}}   |\, I \,|  \Big)^{\frac{\tilde{s}-2}{\tilde{s}}}   \,  \|\na \varphi\|_{L^{\tilde{s}}(\tilde{F}_N)}^2   \\
   & \le  C'' N^{\frac{3\tilde{s}-6}{\tilde{s}}} \, \Big(\frac{1}{|Q_{N'}|} \sum_{I \in \CC(\tilde{F}_N)}  \diam(I)^{p} |\, I \,|  \Big)^{\frac{\tilde{s}-2}{\tilde{s}}} \, \|\na \varphi\|_{L^{\tilde{s}}(\tilde{F}_N)}^2   
   \end{align*}
   where $p = \max((s-3 + rs) \frac{\tilde{s}}{\tilde{s} - s}, \frac{2r \tilde{s}}{\tilde{s}-2})$.  
 As 
 $$   \frac{1}{|Q_{N'}|} \sum_{I \in \CC(\tilde{F}_N)}  \diam(I)^{p} |\, I \,|  \le \frac{1}{|Q_{N'}|} \int_{Q_{N'}} \diam(I_{y,F})^{p} dy $$
 we find by the ergodic theorem that 
 $$ \limsup_{N \rightarrow + \infty} \frac{1}{|Q_{N'}|} \sum_{I \in \CC(\tilde{F}_N)}  \diam(I)^p |\, I \,|  \le \E \, \diam(I_{0,F})^{p} < +\infty $$
   which concludes the proof. 
\end{proof}
\subsection{Extension inside the inclusions preserving divergence}
Our proof of homogenization, based on the div-curl lemma, will require proper extensions of solenoidal vector fields, or of fields with given divergence, inside the inclusions.  This is the purpose of  
\begin{proposition} \label{prop_div_extensions}

\sspace
Assume that \eqref{H2} holds with $F' = F$ and $s > 3$. Let 
$f^\eps \in L^{6/5}(U)$, $\: p^\eps \in L^2(U\setminus F^\eps)$ such that  $\:  \div p^\eps = f^\eps \: \text{ in }  \: U\setminus F^\eps$, satisfying the following compatibility conditions: 
$$  \int_{\pa I^\eps} p^\eps \cdot \nu = \int_{I^\eps} f^\eps, \quad \forall   I^\eps \in \CC(F^\eps).  $$
Then,  there exists a field $P^\eps \in L^2(U)$ satisfying 
$P^\eps\vert_{U\setminus F^\eps}  = p^\eps$,   $\div P^\eps = f^\eps$ in $U$. Moreover, given any $t < s' = \frac{s}{s-1}$, there exists $p = p(t)$ such that under the additional hypothesis $\E \, \diam(I_{0,F})^p < +\infty$,   one can choose $P^\eps$ satisfying  the uniform estimate 
$$ \| P^\eps\|_{L^t(U)} \le C_t \Big( \|p^\eps\|_{L^2(U\setminus F^\eps)} + \|f^\eps\|_{L^{6/5}(U)} \Big)$$ 
\end{proposition}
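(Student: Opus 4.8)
I would treat the two assertions separately, since they call for different constructions. For the bare existence of an $L^2(U)$ extension (with no uniform control) I would argue \emph{locally}: at fixed $\eps$ there are only finitely many inclusions $I^\eps\in\CC(F^\eps)$, and on each of them one solves the Neumann problem $\Delta\pi=f^\eps$ in $I^\eps$, $\pa_\nu\pi=p^\eps\cdot\nu$ on $\pa I^\eps$. This is well posed in $H^1(I^\eps)/\R$ by Lax--Milgram (the linear form $v\mapsto -\int_{I^\eps}f^\eps v+\langle p^\eps\cdot\nu,v\rangle_{\pa I^\eps}$ is continuous on $H^1(I^\eps)$ because $f^\eps\in L^{6/5}$ and $p^\eps\cdot\nu\in H^{-1/2}(\pa I^\eps)$), and solvability rests precisely on the compatibility condition $\int_{\pa I^\eps}p^\eps\cdot\nu=\int_{I^\eps}f^\eps$. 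Patching $P^\eps:=\na\pi$ inside the inclusions with $P^\eps:=p^\eps$ outside makes the normal traces match across $\pa F^\eps$, so $\div P^\eps=f^\eps$ in $U$ and $P^\eps\in L^2(U)$ (finitely many inclusions); no uniform-in-$\eps$ bound arises this way.

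For the uniform $L^t$ bound, $t<s'$, the plan is to proceed by duality with Proposition \ref{prop_extension_H2}, the two statements being dual to one another. Fix $t<s'$, so that $\tilde s:=t'>s$, and let $p=p(t):=p(\tilde s)$ be the exponent supplied by Proposition \ref{prop_extension_H2}. I would introduce, for $\phi\in H^1_0(U)$, the functional
$$ L^\eps(\phi):=-\int_U f^\eps\,\phi\;-\;\int_{U\setminus F^\eps}p^\eps\cdot\na\phi, $$
continuous on $H^1_0(U)$ by H\"older and Sobolev. The crucial point to check is that $L^\eps(\phi)$ depends only on $\na\phi|_{F^\eps}$: when $\na\phi=0$ on $F^\eps$, $\phi$ equals a constant $c_{I^\eps}$ on each $I^\eps$, and Green's formula for $p^\eps\in L^2(U\setminus F^\eps)$ with $\div p^\eps=f^\eps\in L^{6/5}$ on the $C^2$ domain $U\setminus F^\eps$, combined with the compatibility conditions, collapses $\int_{U\setminus F^\eps}p^\eps\cdot\na\phi$ to $-\int_U f^\eps\phi$, so $L^\eps(\phi)=0$. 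Hence $L^\eps$ descends to a linear functional $\ell^\eps$ on $E:=\{\na\phi|_{F^\eps}:\phi\in H^1_0(U)\}\subset L^{t'}(F^\eps;\R^3)$.

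To estimate $\ell^\eps$ I would, given $\phi$, apply Proposition \ref{prop_extension_H2} to $\varphi^\eps:=\phi|_{F^\eps}\in W^{1,\tilde s}(F^\eps)$: under $\E\,\diam(I_{0,F})^p<+\infty$ this produces $\psi^\eps\in H^1_0(U)$ with $\na\psi^\eps=\na\phi$ on $F^\eps$ and $\|\na\psi^\eps\|_{L^2(U)}\le C\|\na\phi\|_{L^{\tilde s}(F^\eps)}$, $C$ independent of $\eps$. Since $\na(\phi-\psi^\eps)=0$ on $F^\eps$ one has $L^\eps(\phi)=L^\eps(\psi^\eps)$, and H\"older together with $H^1_0(U)\hookrightarrow L^6(U)$ gives $|L^\eps(\phi)|\le C(\|f^\eps\|_{L^{6/5}(U)}+\|p^\eps\|_{L^2(U\setminus F^\eps)})\,\|\na\phi\|_{L^{t'}(F^\eps)}$; thus $\ell^\eps$ is bounded on $E$ with norm $\le C(\|f^\eps\|_{L^{6/5}}+\|p^\eps\|_{L^2})$, uniformly in $\eps$. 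Finally I would extend $\ell^\eps$ to $L^{t'}(F^\eps;\R^3)$ by Hahn--Banach and take its Riesz representative $Q^\eps\in L^t(F^\eps;\R^3)$, then set $P^\eps:=p^\eps$ on $U\setminus F^\eps$ and $P^\eps:=Q^\eps$ on $F^\eps$; since $t<2$ and $U$ is bounded, $P^\eps\in L^t(U)$ with the claimed estimate, $P^\eps|_{U\setminus F^\eps}=p^\eps$, and for $\phi\in C^\infty_c(U)$, $\int_U P^\eps\cdot\na\phi=\int_{U\setminus F^\eps}p^\eps\cdot\na\phi+L^\eps(\phi)=-\int_U f^\eps\phi$, i.e.\ $\div P^\eps=f^\eps$ in $U$. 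I expect the main obstacle to be the factorization step: proving that $L^\eps$ only sees the gradient of the test function on the inclusions — this is where the compatibility conditions enter, through a Green formula that needs a little care because $f^\eps$ is merely $L^{6/5}$ (here the $C^2$ regularity in (G1) is used) — and, relatedly, getting the exponent matching right ($\tilde s=t'$, forcing the restriction $t<s'$) so that Proposition \ref{prop_extension_H2} can be invoked to trade the $H^1_0(U)$-norm of a test function for the $L^{\tilde s}$-norm of its gradient restricted to $F^\eps$.
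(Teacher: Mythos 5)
Your proposal is correct in substance but proves the key estimate by a genuinely different route than the paper. The paper constructs the extension concretely: it first subtracts $\na w$ (Dirichlet solution of $\Delta w=f^\eps$ in $U$) to reduce to a solenoidal extension, solves a harmonic Neumann problem in each inclusion, and then proves the $L^t$ bound by duality, Helmholtz-decomposing the dual test field $\Phi$ on each $I^\eps$ and applying Proposition \ref{prop_extension_H2} to the potential part; this forces a quantitative bound on the Leray projector on each inclusion (operator norm controlled by $\diam(I)^R$, obtained by a sketched elliptic-regularity-with-constants argument), and the resulting $\diam$-factors are why the paper must take $s<\tilde s<t'$ strictly and absorb them by H\"older plus the ergodic theorem. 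You instead keep the whole argument at the level of the linear functional $L^\eps$, show via Green's formula and the compatibility conditions that it only sees $\na\phi\vert_{F^\eps}$, apply Proposition \ref{prop_extension_H2} directly to the test function (so you may take $\tilde s=t'$, no Leray projector and no $\diam^R$ bookkeeping), and produce the interior field abstractly by Hahn--Banach and Riesz representation; your verification that the functional vanishes when $\na\phi=0$ on $F^\eps$ is exactly the right use of the compatibility conditions, and the exponent bookkeeping ($t<s'\Rightarrow t'>s$, $p=p(t')$) is consistent with Proposition \ref{prop_extension_H2}. Two small caveats. First, as written $E=\{\na\phi\vert_{F^\eps}:\phi\in H^1_0(U)\}$ is not contained in $L^{t'}(F^\eps)$ since $t'>2$; you should define $\ell^\eps$ on gradients of $\phi\in C^\infty_c(U)$ (or $H^1_0\cap W^{1,t'}$), which is all you need both for the bound and for testing the divergence. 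Second, your estimate-satisfying extension is only exhibited in $L^t(U)$, whereas the statement nominally asks that the $L^2(U)$ field itself can be chosen with the uniform $L^t$ bound; this is harmless for the way the proposition is used in Section \ref{subsec_without}, but note that the paper's concrete construction (gradient of a potential inside each inclusion) is re-used later, in the proof of Theorem \ref{thm_homog} with shorts (the bound \eqref{extra_int_Zkappa} relies on $P^{\kappa,\eps}=\na w+\na v^{\kappa,\eps}$ inside inclusions), so the explicit construction buys additional structure that your abstract representative does not provide.
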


\begin{proof}
Let $t < s'$. We introduce $\tilde{s}$ such that $s < \tilde{s} < t'$. We also introduce the solution $w$ of
$$ \Delta w =  f^\eps  \: \text{ on } \: U,  \quad w\vert_{\pa U} = 0. $$ 
It satisfies the estimate 
$$  \|\na w\|_{L^q(U)}  \le C_q \|f^\eps\|_{W^{-1,q}(U)} \le C'_q \|f^\eps\|_{L^{3q/(3+q)}(U)} \quad \forall  q \in (1, 2] $$
Denoting $s^\eps = p^\eps - \na w$, it remains to find $S^\eps$ satisfying 
$S^\eps\vert_{U\setminus F^\eps}  = s^\eps$,   $\div S^\eps =0$ in $U$, and
$$  \| S^\eps\|_{L^t(U)} \le C_t  \|s^\eps\|_{L^2(U\setminus F^\eps)}. $$
Then $P^\eps := S^\eps + \na w$ will meet all requirements. The idea  is to search for $S^\eps$ in the form of a gradient in the inclusions. Strictly speaking, for a fixed realization, we introduce a field $v^\eps$ (depending on $\omega$) defined on $F^\eps$ that verifies in each inclusion $I^\eps$ the Neumann problem 
\begin{equation*}
\left\{
\begin{aligned}
 \Delta v^\eps & = 0 \quad \text{in } \mathring{I}^\eps, \\
\partial_{\nu} v^\eps & = s^\eps \cdot \nu \quad \text{in } \: \partial I^\eps.
\end{aligned}
\right. 
\end{equation*}
This Neumann problem is well-posed thanks to the compatibility condition 
$$ \int_{\pa I^\eps} s^\eps \cdot \nu =  \int_{\pa I^\eps} p^\eps \cdot \nu - \int_{\pa I^\eps} \pa_\nu w  = \int_{I^\eps} f^\eps   - \int_{\mathring{I}^\eps} \Delta w = 0, \quad \forall  I^\eps \in \CC(F^\eps).  $$
 We then define the random field $S^\eps$ by $S^\eps=s^\eps$ in $U \setminus F^\eps$ and $S^\eps = \na v^\eps$ in $F^\eps$. It is divergence-free on $U$ thanks to the continuity of its normal component through each $\pa I^\eps$. To establish the uniform estimate on $S^\eps$ in $L^t(U)$, we proceed by duality.   Let $\Phi \in L^{t'}(U)$, with $t'$ the conjugate of $t$.  It admits the following Helmholtz decomposition in each inclusion: 
$$\Phi|_{I^\eps}:= \mathbb{P}_{\mathring{I}^\eps}  \Phi + \nabla \varphi^\eps,  \quad \forall    I^\eps \in \CC(F^\eps). $$
where for any open set $\mathcal{O}$,  $\mathbb{P}_{\mathcal{O}} $ is the  Leray projector, continuous  over $L^{\tilde{s}}(\mathcal{O})$. More precisely, we claim that 
$$ \|\mathbb{P}_{\mathring{I}^\eps}  \Phi\|_{L^{\tilde{s}}(I^\eps)} \: + \:  ||\na \varphi^\eps ||_{L^{\tilde{s}}(I^\eps)} \leq \, C_{\tilde{s}} \, \diam(I)^R  \, ||\Phi||_{L^{\tilde{s}}(I^\eps)}, \quad  \text{ for some }  R > 0.  $$
Indeed, by scaling, it is enough to show this inequality for $\eps=1$. To show that  the operator norm of $\mathbb{P}_{\mathring{I}}$ (or equivalently $\mathrm{Id} - \mathbb{P}_{\mathring{I}}$)  
 is bounded by a power of $\diam(I)$, one writes 
$(\mathrm{Id} - \mathbb{P}_{\mathring{I}})  f = \na u_f$, where  
$$ \Delta u_f = \div f \quad \text{ on } \mathring{I}, \quad \pa_\nu u\vert_{\pa I} = f \cdot \nu.  $$
One must  then  look carefully  at the proof  of the inequality  $\|\na u_f\|_{L^{\tilde{s}}(\mathring{I})} \le C_I \|f\|_{L^{\tilde{s}}(\mathring{I})}$, and track the dependence of $C_I$ with respect to $I$. The derivation of this inequality follows the usual scheme:  by local charts and straightening of the boundary, one can use that the inequality holds in  $\R^3$ and  $\R^3_+$. A tedious verification shows that the constant in the inequality involves the constant $d$ in (G1), the number of charts and the constant in the Poincaré inequality.  Under our regularity assumptions, it is  controlled by $\diam(I)^R$ for large enough $R$. We skip the details for brevity.  

\mspace
We now introduce the function $\phi^\eps \in H^1_0(U)$ associated to $\varphi^\eps$ in Proposition \ref{prop_extension_H2}. In particular, $\phi^\eps$ and $\varphi^\eps$ coincide on each $I^\eps$ up to a constant function. We compute 
\begin{align*}
 \int_{F^\eps} \na v^\eps \cdot \Phi  & = \sum_{I^\eps \in \CC(F^\eps)} \int_{I^\eps} \na v^\eps \cdot \na \varphi^\eps  \underbrace{- \int_{I^\eps} v^\eps \, \div \, \mathbb{P}_{\mathring{I}^\eps} \Phi + \int_{\partial I^\eps} v^\eps \,  \mathbb{P}_{\mathring{I}^\eps}  \Phi \cdot \nu}_{=0}\\
    & =  \sum_{I^\eps \in \CC(F^\eps)}\int_{\partial I} s^\eps \cdot \nu  \, \varphi^\eps \text{ using the equation on $v^\eps$}\\
    & =   \sum_{I^\eps \in \CC(F^\eps)}\int_{\partial I} s^\eps \cdot \nu  \, \phi^\eps  \text{ using that $\int_{\partial_I} s^\eps \cdot \nu =0$}\\
    & =  \int_{U \setminus F^\eps} s^\eps \cdot \na \phi^\eps  \le \|s^\eps\|_{L^2(U\setminus F^\eps)} \|\na\phi^\eps\|_{L^2(U\setminus F^\eps)}   \le C \|s^\eps\|_{L^2(U\setminus F^\eps)} \| \na \varphi^\eps\|_{L^{\tilde{s}}(F^\eps)} 
\end{align*}
where the last inequality comes from Proposition \ref{prop_extension_H2}. Now,  
\begin{align*}
\| \na \varphi^\eps\|_{L^{\tilde{s}}(F^\eps)}^{\tilde{s}} & = \sum_{I^\eps \in \CC(F^\eps)}  \| \na \varphi^\eps\|_{L^{\tilde{s}}(I^\eps)}^{\tilde{s}}  \le C  \sum_{I^\eps \in \CC(F^\eps)} \diam(I)^{R\tilde{s}} 
  \| \Phi \|_{L^{\tilde{s}}(I^\eps)}^{\tilde{s}} \\
& \le C \Big( \sum_{I^\eps \in \CC(F^\eps)}  |I^\eps|  \diam(I)^{p}  \Big)^{\frac{t'- \tilde{s}}{t'}} \| \Phi \|_{L^{t'}(F^\eps)}^{\tilde{s}}, \quad p = \frac{R \tilde{s} t'}{t'-\tilde{s}}. 
\end{align*}
Again, 
$$  \sum_{I^\eps \in \CC(F^\eps)} |I^\eps|  \diam(I)^{p} \le C \int_U \diam(I_{x/\eps,F})^p \: \xrightarrow[\eps \rightarrow 0]{} C \,  |U| \,  \E \,  \diam(I_{0,F}^p) < +\infty $$
so that we end up with 
\begin{align*}
\int_{U} S^\eps  \cdot \Phi & =  \int_{U\setminus F^\eps} s^\eps  \cdot \Phi + \int_{F^\eps} \na v^\eps  \cdot \Phi    \le C \|s^\eps\|_{L^2(U\setminus F^\eps)}  \big( \|\Phi\|_{L^2(U\setminus F^\eps)} +  \|\Phi\|_{L^{t'}(F^\eps)} \Big)  \\
& \le C'   \|s^\eps\|_{L^2(U\setminus F^\eps)} \|\Phi\|_{L^{t'}(U)} 
\end{align*}
This concludes the proof. 
\end{proof}

\section{Proofs of the main results} \label{sec_main_results}
Here, again, $F$ is an admissible set of inclusions. 
\subsection{Existence of the homogenized matrix - Proposition \ref{prop_corrector}}

The goal of this section is to define properly the matrix $A^0$, describing the effective viscosity of the conducting medium. We follow here the approach developped in \cite[chapter 8]{book:ZKO}. We first introduce the so-called resistance matrix, that is the symmetric matrix defined by:  
$$ \forall \xi \in \R^3, \quad B^0 \xi \cdot \xi := \inf_z \E \int_{Q_1 \setminus F} |\xi + z|^2, $$
where the infimum is taken over vector fields $z = z(y,\omega) \in L^2(\Omega, L^2_{loc}(\R^3))$ that are solenoidal, stationary and mean-free. An equivalent formulation of the variational problem is 
\begin{equation} \label{VF_B0}
 \forall \xi \in \R^3, \quad B^0 \xi \cdot \xi := \inf_{Z \in \mathcal{V}^2_{sol}(\Omega)}   \int_{\Omega\setminus \mF}  |\xi + Z|^2, 
 \end{equation}
where:
\begin{itemize}
\item $\mathcal{F}$ is the subset of $\Omega$ defined in \eqref{defi_rmF}, so that $F(\omega) = \{ x \in \R^3, \tau_x(\omega) \in \mF\}$. 
\item  $\mathcal{V}^2_{sol}(\Omega) = \{ Z \in L^2(\Omega), \quad \E Z = 0, \quad y \rightarrow Z(\tau_y(\omega)) \: \text{solenoidal vector field}\}$.  
\end{itemize}
We remind that introducing the other subspace of vector fields 
$$L^2_{pot}(\Omega) = \{ U \in L^2(\Omega), \quad  y \rightarrow U(\tau_y(\omega)) \: \text{potential vector field}\}$$
one has the orthogonal decomposition $L^2(\Omega) =  \mathcal{V}^2_{sol}(\Omega) \oplus L^2_{pot}(\Omega)$.  

\mspace
Still following \cite[chapter 8]{book:ZKO}, if we now  denote 
\begin{equation} \label{def_X}
 \text{$X$ : the closure in $L^2(\Omega\setminus \mF)$ of the space $\{Z\vert_{\Omega\setminus \mF}, \: Z \in \mathcal{V}^2_{sol}(\Omega)\}$} 
 \end{equation}
 then there exists a unique minimizer $Z \in X$ attaining the infimum, and it satisfies the Euler-Lagrange equation: 
 $$ \int_{\Omega \setminus \mathcal{F}}  (\xi + Z) \cdot Z' = 0, \quad \forall Z' \in \mathcal{V}^2_{sol}(\Omega).   $$
 In particular, $1_{\mF} (\xi + Z) \in L^2_{pot}(\Omega)$, and $B^0 \xi = \E 1_{\mF} (\xi + Z)$. 
  
\mspace
The last step of proof of Proposition \ref{prop_corrector} is showing that the matrix $B^0$ above is invertible. Therefore, we use Lemma 8.7 of \cite{book:ZKO}, which provides a sufficient condition: 
\begin{lemma} {\bf \cite[Lemma 8.7]{book:ZKO}}
Assume that for any $\xi \in \R^3$, and for any $\omega$ in a subset of positive measure,  there exists a sequence of potential vector fields $v^\eps \in L^2(U)$ satisfying 
$$ v^\eps\vert_{F^\eps(\omega)} = 0, \quad v^\eps \rightarrow \xi \: \text{weakly in $L^2(U)$}, \quad \limsup_{\eps \rightarrow 0} \|v^\eps\|_{L^2}    \le C |\xi| \: \text{ for some $C > 0$.} $$
Then,  $B^0$ is positive definite.  
\end{lemma}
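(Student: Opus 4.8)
To establish positive definiteness, the plan is to bound $B^0\xi\cdot\xi$ from below: I will show that for every $\xi\in\R^3$ one has $B^0\xi\cdot\xi\ge c\,|\xi|^2$ for some $c>0$ (together with symmetry this gives positive definiteness). Recall from \eqref{VF_B0} and \eqref{def_X} that $B^0\xi\cdot\xi=\inf\,\E\big[1_{\Omega\setminus\mF}\,|\xi+Z|^2\big]$, the infimum running over $Z\in\mV^2_{sol}(\Omega)$ (the dense subset of restrictions being enough). Fix $\xi\neq0$ and a sequence $\eta_n\downarrow 0$, and for each $n$ pick $\tilde Z_n\in\mV^2_{sol}(\Omega)$ with $\E[1_{\Omega\setminus\mF}|\xi+\tilde Z_n|^2]\le B^0\xi\cdot\xi+\eta_n$. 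By Birkhoff's ergodic theorem there is a set of full $P$-measure on which, for every $n$, all the ergodic limits used below hold (spatial averages of $1_{\Omega\setminus\mF}|\xi+\tilde Z_n|^2$, of $|\tilde Z_n|^2$, and of truncations thereof, converge to their expectations); intersecting it with the positive-measure set given by the hypothesis, I fix one $\omega$ in the intersection and denote by $C=C(\omega)$ the associated constant. I set $\tilde z_n^\eps(x):=\tilde Z_n(\tau_{x/\eps}\omega)$, which is divergence-free on $\R^3$, bounded in $L^2(U)$, and converges weakly in $L^2(U)$ to its mean $\E\tilde Z_n=0$.

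Fix $n$. Since $v^\eps$ vanishes on $F^\eps(\omega)$, the Cauchy--Schwarz inequality gives
\begin{equation*}
\int_U v^\eps\cdot(\xi+\tilde z_n^\eps)\,dx=\int_{U\setminus F^\eps}v^\eps\cdot(\xi+\tilde z_n^\eps)\,dx\le \|v^\eps\|_{L^2(U)}\,\big\|1_{U\setminus F^\eps}(\xi+\tilde z_n^\eps)\big\|_{L^2(U)} .
\end{equation*}
On the right, $\limsup_{\eps\to0}\|v^\eps\|_{L^2(U)}\le C|\xi|$ by hypothesis. For the second factor, write $U\setminus F^\eps(\omega)=(U\setminus\eps F(\omega))\sqcup W^\eps$, where $W^\eps$ is the union (intersected with $U$) of those inclusions that meet $\partial U$ or come within $\eps\delta_0$ of it. On $U\setminus\eps F(\omega)$ one has $1_{U\setminus\eps F(\omega)}(x)=1_{\Omega\setminus\mF}(\tau_{x/\eps}\omega)$, so the ergodic theorem yields $\limsup_\eps\tfrac1{|U|}\big\|1_{U\setminus\eps F}(\xi+\tilde z_n^\eps)\big\|_{L^2(U)}^2\le \E[1_{\Omega\setminus\mF}|\xi+\tilde Z_n|^2]\le B^0\xi\cdot\xi+\eta_n$; moreover $\big\|1_{W^\eps}(\xi+\tilde z_n^\eps)\big\|_{L^2(U)}\to0$, because $W^\eps$ lies in an $O(\eps)$-neighbourhood of $\partial U$ up to the inclusions of diameter $>R$, whose total volume fraction tends to $P(\diam(I_{0,F})>R)\to0$ as $R\to\infty$ (here the standing bound $\E\,\diam(I_{0,F})^2<\infty$ and the equi-integrability of $|\tilde z_n^\eps|^2$ are used). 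Hence $\limsup_{\eps\to0}\int_U v^\eps\cdot(\xi+\tilde z_n^\eps)\,dx\le C|\xi|\sqrt{|U|\,(B^0\xi\cdot\xi+\eta_n)}$.

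To identify the limit of the left-hand side, split $\int_U v^\eps\cdot(\xi+\tilde z_n^\eps)=\int_U v^\eps\cdot\xi+\int_U v^\eps\cdot\tilde z_n^\eps$. The first term tends to $|U|\,|\xi|^2$, since $v^\eps\rightharpoonup\xi$ weakly in $L^2(U)$ and the constant $\xi$ lies in $L^2(U)$. For the cross term, $v^\eps$ is curl-free (being a potential field) and $\tilde z_n^\eps$ is divergence-free, both bounded in $L^2(U)$ with weak limits $\xi$ and $0$, so the div--curl lemma gives $v^\eps\cdot\tilde z_n^\eps\rightharpoonup0$ in $\mathcal D'(U)$, hence $\int_U\chi\,v^\eps\cdot\tilde z_n^\eps\to0$ for all $\chi\in C^\infty_c(U)$; picking $\chi_k\in C^\infty_c(U)$ with $0\le\chi_k\le1$, $\chi_k\uparrow1$ on $U$, and estimating $\big|\int_U(1-\chi_k)v^\eps\cdot\tilde z_n^\eps\big|\le C|\xi|\,\big\|\tilde z_n^\eps\big\|_{L^2(\{\chi_k<1\})}$ together with $\limsup_\eps\big\|\tilde z_n^\eps\big\|_{L^2(\{\chi_k<1\})}^2\le|\{\chi_k<1\}|\,\E|\tilde Z_n|^2\xrightarrow[k\to\infty]{}0$, a diagonal argument yields $\int_U v^\eps\cdot\tilde z_n^\eps\to0$. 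Therefore $|U|\,|\xi|^2\le C|\xi|\sqrt{|U|\,(B^0\xi\cdot\xi+\eta_n)}$, i.e. $B^0\xi\cdot\xi\ge |U|\,|\xi|^2/C^2-\eta_n$; letting $n\to\infty$ gives $B^0\xi\cdot\xi\ge |U|\,|\xi|^2/C^2>0$. Since $\xi\neq0$ was arbitrary, $B^0$ is positive definite.

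The delicate step is the cross term $\int_U v^\eps\cdot\tilde z_n^\eps$: the hypothesis supplies $v^\eps$ only as a potential field in $L^2(U)$, with no prescribed boundary behaviour, so one cannot integrate by parts over $U$ directly; the div--curl lemma, combined with the ergodic control of $\tilde z_n^\eps$ near $\partial U$, is what closes this gap. The remaining, more routine, point is matching $1_{U\setminus F^\eps}$ with the stationary indicator $1_{\Omega\setminus\mF}(\tau_{\cdot/\eps}\omega)$ when applying the ergodic theorem, which forces one to discard the inclusions straddling $\partial U$ via the moment bound $\E\,\diam(I_{0,F})^2<\infty$; everything else is Cauchy--Schwarz, weak convergence, and Birkhoff's theorem.
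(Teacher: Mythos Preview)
The paper does not prove this lemma; it is quoted from \cite[Lemma~8.7]{book:ZKO} without argument, and the paper only verifies its hypothesis by taking $v^\eps=\xi-\na\phi^\eps$ from Proposition~\ref{prop_extension_H1}. Your proof is the standard one underlying the cited result and is correct: pair $v^\eps$ against $\xi+\tilde z_n^\eps$ for an approximate solenoidal minimizer $\tilde Z_n\in\mV^2_{sol}(\Omega)$, bound the right-hand side by Cauchy--Schwarz on $U\setminus F^\eps$ and the ergodic theorem, and identify the left-hand side as $|U|\,|\xi|^2$ via the div--curl lemma (with a cutoff to pass from $\mathcal D'(U)$ to the full integral). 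The conclusion $|U|\,|\xi|^2\le C|\xi|\sqrt{|U|(B^0\xi\cdot\xi+\eta_n)}$ then gives the lower bound.

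One small remark: to discard the boundary layer $W^\eps=(\eps F\cap U)\setminus F^\eps$ you invoke $\E\,\diam(I_{0,F})^2<\infty$, which is a standing hypothesis of Proposition~\ref{prop_corrector} but not of the lemma as stated. In fact (G1) alone already yields $P(\diam(I_{0,F})>R)\to0$ and hence $|W^\eps|\to0$; combined with the equi-integrability of $|\tilde z_n^\eps|^2$ that you mention (obtained by truncating $|\tilde Z_n|^2$ and applying the ergodic theorem to the tail), this suffices. In the original setting of \cite{book:ZKO}, where the inclusions are unit balls, the boundary layer issue does not arise at all.
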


\mspace
The keypoint is that under \eqref{H1}, the assumptions of the lemma are granted by Proposition \ref{prop_extension_H1}: one can  take $v^\eps = \xi - \na \phi^\eps = \na (\xi \cdot x - \phi^\eps)$, with $\phi^\eps$ as in Proposition  \ref{prop_extension_H1}. This concludes the proof of Proposition  \ref{prop_corrector}.

\subsection{Homogenization without short} \label{subsec_without}
We prove in this section part of Theorem \ref{thm_homog}. Namely, we focus on the case where \eqref{H2} is satisfied with $F' = F$, for some $s> 3$. The reason for treating this special case separately is that it is much easier :  indeed, the arguments of  \cite[chapter 8]{book:ZKO} rely on the existence of  proper extensions of solenoidal  vector fields, or of fields with given divergence,  inside the inclusions.  As such extensions are granted by  Proposition  \ref{prop_div_extensions}, they  adapt straightforwardly. The proof of the general case, given in the next section, will be more involved (and due to technical difficulty limited to $s < 6$). 

\mspace
First, by Remark iii) after Theorem \ref{thm_homog}, \eqref{H2} implies \eqref{H1}. Hence, we can apply Proposition \ref{prop_corrector}, so that $A^0 = (B^0)^{-1}$ is well-defined. Let $f \in L^{6/5}(U)$, and  $u^\eps  \in H^1_0(U)$ the solution of \eqref{main_system} (with implicit dependence on $\omega$), where domain $F^\eps$ is defined in \eqref{defi:rescaled}. From a simple energy estimate, $u^\eps$ is bounded in $H^1_0(U)$ uniformly in $\omega$ and $\eps$. Hence, almost surely, $u^\eps$ has a subsequence  that converges weakly to some $u^0$. The goal is to show that $u^0$ satisfies \eqref{target_system}. By uniqueness of this accumulation point, this will mean  that the whole sequence converges to $u^0$. From now on, for the sake of brevity, we denote $u^\eps$ the converging subsequence. 

\mspace
  Let now $\xi \in \R^3$,  and $Z$ the minimizer of problem \eqref{VF_B0}. 
As $Z \in X$, {\it cf.} \eqref{def_X}, there exists a sequence  $Z^\nu \in \mathcal{V}^2_{sol}(\Omega)$ and $\|Z^\nu - Z\|_{L^2(\Omega\setminus \mF)} \rightarrow 0$ as $\nu \rightarrow 0$. Let $\bar{Z}$, resp. $\bar{Z}^\nu$, the extension of $Z$, resp. of $Z^\nu\vert_{\Omega\setminus\mF}$, by $-\xi$ on $\mF$. We  remind that $\xi + \bar{Z} \in L^2_{pot}(\Omega)$, with $\E \, (\xi + \bar{Z}) = B^0 \xi$. We finally set 
$$ \bar{z}(y,\omega) = \bar{Z}(\tau_y(\omega)), \quad  \bar{z}^\nu(y,\omega) = \bar{Z}^\nu(\tau_y(\omega)), \quad z^\nu(y,\omega) = Z^\nu(\tau_y(\omega)). 
$$
  Let $p^\eps = \na u^\eps$. By Proposition \ref{prop_div_extensions}, for any $t < s'$, assuming \eqref{moment_bound_F} for large enough $p$, one can extend $p^\eps$ into a field   $P^\eps$ such that 
$$ P^\eps\vert_{U\setminus F^\eps} = p^\eps, \quad \div P^\eps = f 1_{U\setminus F^\eps} \: \text{ in } U, \quad \|P^\eps\|_{L^t(U)} \le C_t $$
The last bound implies weak convergence of (a subsequence of)  $P^\eps$ towards some $P^0$ in $L^t(U)$. By the ergodic theorem, $f 1_{U\setminus F^\eps}$ converges weakly to $f (1-\lambda)$, with $\lambda =\E 1_{F}$,  in $L^{6/5}(U)$. Hence, 
$\div P^0 = (1-\lambda)f$ in  $U$.  
Let now $\varphi \in C^\infty_c(U)$, and $u^0$ the weak limit of (a subsequence of) $u^\eps$  in $H^1_0(U)$. The point is to show that, as $\eps \rightarrow 0$:
\begin{equation} \label{divcurl1}  
\int_{U} \varphi(x) \na u^\eps(x) \cdot (\xi +  \bar{z}(x/\eps)) dx  \: \rightarrow \: \int_{U}  \varphi(x) \na u^0(x)\cdot  \xi dx 
\end{equation}
as well as 
\begin{equation} \label{divcurl2}  
\int_{U} \varphi(x) \na u^\eps(x) \cdot (\xi +  \bar{z}(x/\eps))  dx  \: \rightarrow \: \int_{U}  \varphi(x) P^0 \cdot B^0 \xi dx 
\end{equation}
Identifying the limits, it follows that $B^0 P^0 = \na u^0$, so that $P^0 = A^0 \na u^0$ and as $\div P^0 = (1-\lambda f)$, we recover system \eqref{target_system}. 

\mspace
The proof of \eqref{divcurl1}-\eqref{divcurl2} is an adaptation of the one in \cite{book:ZKO}, so that we indicate only the main elements and the changes that are needed. As regards \eqref{divcurl1}, we write 
\begin{equation} \label{decompoz}
\begin{aligned}
\int_{U} \varphi(x) \na u^\eps(x) \cdot (\xi + \bar{z}(x/\eps)) dx & = \int_{U} \varphi(x) \na u^\eps(x) \cdot (\bar{z}(x/\eps) - \bar{z}^\nu(x/\eps)) dx \\
& +  \int_{U} \varphi(x) \na u^\eps(x) \cdot  (\bar{z}^\nu(x/\eps) - z^\nu(x/\eps))  dx  \\
& +  \int_{U} \varphi(x) \na u^\eps(x) \cdot  (\xi + z^\nu(x/\eps))  dx  
\end{aligned}
\end{equation}
The first term at the r.h.s. satisfies 
\begin{align*} 
\big| \int_{U} \varphi(x) \na u^\eps(x) \cdot (\bar{z}(x/\eps) - \bar{z}^\nu(x/\eps))  dx \big| & \le \| \varphi\|_\infty \, \|\na u^\eps \|_{L^2(U)} \, \| \bar{z}(x/\eps) - \bar{z}^\nu(x/\eps)  \|_{L^2(U)}
 \end{align*}
 so that, by the uniform $L^2$ bound on $\na u^\eps$ and the  ergodic theorem: 
 $$ \big| \limsup_{\eps}  |\int_{U} \varphi(x) \na u^\eps(x) \cdot (\bar{z}(x/\eps) - \bar{z}^\nu(x/\eps)) dx \big| \le C \|\bar{Z} - \bar{Z}^\nu\|_{L^2(\Omega)}= 
  C \|Z - Z^\nu\|_{L^2(\Omega\setminus\mF)} $$
 and finally 
 $$ \limsup_{\nu}  \limsup_{\eps}   |\int_{U} \varphi(x) \na u^\eps(x) \cdot (\bar{z}(x/\eps) - z^\nu(x/\eps)) dx \big| = 0 $$
 For the second term at the r.h.s. of \eqref{decompoz}, we notice that  $\na u^\eps \cdot  (\bar{z}^\nu(\cdot/\eps) - z^\nu(\cdot/\eps))$ is zero in $F^\eps$, because $\na u^\eps$ is zero there, and in $U \setminus \eps F$, because $\bar{z}^\nu = z^\nu$ there. However, it does not {\it a priori} vanish in $(\eps F) \cap U \setminus F^\eps$. This corresponds to inclusions $I^\eps$ in $\eps F$ that intersect $U_{\delta_0 \eps}$, where 
 $$ U_\eta  := \{ x \in U, \text{d}(x, \pa U) \le \eta \}, \quad  \eta > 0. $$
 A crucial point is that, under the moment condition $\E \, \diam(I_{0,F})^3 < +\infty$, by a direct adaptation of the proof of  Lemma \ref{lemmaH2H2kappa} and  \eqref{uniform_bound_inclusion} below,   one has almost surely,
 $$  \sup \, \{ \diam(I^\eps), \: I^\eps \in \CC(\eps F), \quad I^\eps \cap U_{\delta_0 \eps} \neq \emptyset \} = o(1) \quad \text{ as } \eps \rightarrow 0. $$
 Hence,  for any $\eta > 0$, for $\eps$ small enough, one has  $(\eps F \cap U) \setminus F^\eps \subset U_\eta$, so that 
\begin{align*}
&  \Big|  \int_{U} \varphi(x) \na u^\eps(x) \cdot  (\bar{z}^\nu(x/\eps) - z^\nu(x/\eps))  dx \Big|   =   \Big|  \int_{(\eps F \cap U) \setminus F^\eps} \varphi(x) \na u^\eps(x) \cdot  (\xi + z^\nu(x/\eps))  dx \Big| \\
 \le & \:  \|\varphi\|_{L^\infty} \| \na u^\eps \|_{L^2(U_\eta)} \| \xi + z^\nu(\cdot/\eps) \|_{L^2(U_\eta)} \:   \le \:    C \| \xi + z^\nu(\cdot/\eps) \|_{L^2(U_\eta)} 
\end{align*}
and by the ergodic theorem 
$$ \limsup_\eps \,   \Big|  \int_{U} \varphi(x) \na u^\eps(x) \cdot  (\bar{z}^\nu(x/\eps) - z^\nu(x/\eps))  dx \Big|  \le C \| 1_{U_\eta}\|_{L^2} \|  \xi + Z^\nu \|_{L^2(\Omega)}  \le C' \eta^{1/2}$$
As $\eta$ is arbitrary, it follows that 
$$ \limsup_\eps \,   \Big|  \int_{U} \varphi(x) \na u^\eps(x) \cdot  (\bar{z}^\nu(x/\eps) - z^\nu(x/\eps))  dx \Big|  = 0. $$
Finally, as regards the third term at the r.h.s. of \eqref{decompoz}, by the div-curl lemma and the ergodic theorem, for any given $\nu$, 
 $$ \lim_{\eps} \int_{U} \varphi(x) \na u^\eps(x) \cdot (\xi +  z^\nu(x/\eps)) dx =  \int_{U} \varphi(x) \na u^0(x) \cdot (\xi +  \E Z^\nu) dx = \int_{U} \varphi(x) \na u^0(x) \cdot \xi dx  $$
 where the last equality comes from the property  $\E Z^\nu =  0$. Combining all previous relations yields  \eqref{divcurl1}. 
 
 \mspace
As regards \eqref{divcurl2}, we want again to rely on the div-curl lemma but switching the potential and solenoidal vector fields. Therefore, we write 
\begin{align*}
 \int_{U} \varphi(x) \na u^\eps(x) \cdot (\xi + \bar{z}(x/\eps)) dx  
 & =  \int_U  \varphi(x)  P^\eps(x)  \cdot  \na w^\eps dx
\end{align*} 
 taking into account that $(\xi + \bar{z}(x/\eps))$  is a potential vector field, hence can be written $\na w^\eps$. Moreover, 
 $$ \na w^\eps \:  \rightarrow \: \E \, (\xi + \bar{Z}) = B^0 \xi \quad \text{weakly in $L^2(U)$, almost surely}. $$ 
 If $(P^\eps)_{\eps > 0}$ was bounded in $L^2(U)$, one could conclude directly by the div-curl lemma. As it is only bounded in $L^t(U)$ for $t < s'$, one must use an approximation of $w^\eps$ by the truncation 
 $$w^{\eps,l(x)} = w^\eps(x) \quad \text{if} \: |w^\eps(x)| \le l, \quad  w^{\eps,l(x)} =  l \quad \text{if} \: w^\eps(x) \ge l, \quad w^{\eps,l(x)} =  -l \quad \text{if} \: w^\eps(x) \le -l. $$
 We refer to \cite[chapter 8, page 286]{book:ZKO} for implementation of this argument. 

\subsection{Homogenization with short - Theorem \ref{thm_homog}} 
We tackle the proof of Theorem \ref{thm_homog}  in the general case where $F'$ is an admissible short of $F$. 

\mspace
First, we introduce the sequence of admissible shorts  $(F^\kappa)_{\kappa \in (0,1]}$, defined by the following properties : 
 for all $\kappa \in (0,1)$, $F'$ is a short of $F^\kappa$ and $F^\kappa$ is a short of $F$, with 
$$\mathrm{Ed}(F^\kappa) = \mathrm{Ed}(F') \cup  \Big\{ e  \in \mathrm{Ed}(F)\setminus \mathrm{Ed}(F') , \quad |e| \ge \kappa \}.$$
In other words, $F^\kappa$ is deduced from $F'$ by removing bridges corresponding to gaps of size larger than $\kappa$. Obviously, almost surely, for every closed ball $B$,  $F^\kappa \cap B = F \cap B$ for $\kappa \le \kappa_B$ small enough. 

\mspace

\begin{lemma} \label{lemmaH2H2kappa} 
 If $F'$ satisfies \eqref{H2} and the moment bound  \eqref{moment_bound_F} for $p=3$, then $F^\kappa$ satisfies \eqref{H2} for all $\kappa > 0$.
\end{lemma}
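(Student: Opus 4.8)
The plan is to check \eqref{H2} for $F^\kappa$ in its equivalent form \eqref{reformulation_H2}, by transferring the question from the multigraph $Gr(F^\kappa)$ to the coarser multigraph $Gr(F')$, for which \eqref{H2} is assumed, modulo the extra edges $\Ed(F^\kappa)\setminus\Ed(F')$, which turn out to be harmless. First I would fix the combinatorics. Since $F\subseteq F^\kappa\subseteq F'$ one has $\Ed(F')\subseteq\Ed(F^\kappa)\subseteq\Ed(F)$; each $I\in\CC(F^\kappa)$ lies in a unique parent component $\pi(I)\in\CC(F')$; and every $e\in\Ed(F^\kappa)\setminus\Ed(F')$ joins two $F^\kappa$-components with the same parent (its gap is filled in $F'$, by Definition \ref{defi:short_inclusions}) and has $|e|\ge\kappa$, hence its weight is bounded: $\mu_e\le|\ln\kappa|=:c_\kappa$.

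Given $N$ and data $\{b_{IJe}\}$ on $\Ed(F^\kappa_N)$, I would look for competitors of the special form $u_I:=c_{\pi(I)}$, constant on each parent. For such $u$, each edge term of $\mE(F^\kappa_N,\cdot,\cdot)$ attached to a ``new'' edge $e\in\Ed(F^\kappa)\setminus\Ed(F')$ collapses to $\mu_e\,|b_{IJe}-b_{JIe}|^2\le 2c_\kappa(|b_{IJe}|^2+|b_{JIe}|^2)$ because $u_I-u_J=0$ there; while the edge terms attached to $e\in\Ed(F')$ together with the whole vertex term are bounded, using $\sum_{I\subset K,\,I\subset Q_N}|I|\le|K|$ and the monotonicity of $\mE$ under adding nodes and edges (property \eqref{prop_E_1}), by the $F'$-energy $\mE\big(F'_{N'},\{c_K\},\{\bar b_{KLe}\}\big)$, where $\bar b_{KLe}:=b_{I(e)J(e)e}$ for $e\in\Ed(F')\cap\Ed(F^\kappa_N)$ and $\bar b:=0$ otherwise, $I(e)\subset K$, $J(e)\subset L$ being the $F^\kappa$-endpoints of $e$, and $Q_{N'}$ a box large enough to contain every parent of an $F^\kappa$-component sitting in $Q_N$. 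Applying \eqref{H2} for $F'$ in the form \eqref{reformulation_H2} on $Q_{N'}$ to choose $\{c_K\}$, and using Hölder with exponents $(\tfrac s2,\tfrac{s}{s-2})$ on the new-edge sum together with the bound $\#\Ed(F^\kappa_N)\le C|Q_N|$ (each inclusion has boundedly many gaps by (G1)--(G2), and volume $\gtrsim d^3$), I would arrive at
\begin{equation*}
\inf_{\{u_I\}}\mE\big(F^\kappa_N,\{u_I\},\{b_{IJe}\}\big)\ \le\ \Big(M'\,\big(\tfrac{|Q_{N'}|}{|Q_N|}\big)^{1-2/s}+C\,c_\kappa\Big)\,|Q_N|\Big(\tfrac1{|Q_N|}\!\!\sum_{\Ed(F^\kappa_N)}\!\!|b_{IJe}|^s\Big)^{2/s},
\end{equation*}
with $M'$ the constant furnished by \eqref{H2} for $F'$. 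Thus \eqref{H2} for $F^\kappa$ reduces to the claim that $Q_N$ can be enlarged to a box $Q_{N'}$ containing all the parents it meets while keeping $|Q_{N'}|=(1+o(1))|Q_N|$ as $N\to\infty$, a.s.

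This last point is the only real difficulty, and it is exactly where the moment assumption $\E\,\diam(I_{0,F'})^3<+\infty$ is used. Take $N'=N+R_N$ with $R_N:=\max\{\diam K:\,K\in\CC(F'),\ K\cap Q_N\neq\emptyset\}$; then every parent of an $F^\kappa$-component in $Q_N$ lies in $Q_{N'}$, and it suffices to show $R_N=o(N)$ a.s. I would combine the moment bound and the ergodic theorem for the stationary field $x\mapsto\diam(K_{x,F'})^3$ (Remark \ref{rem_I0_C0}) with the elementary fact that every $F'$-component obeys $|K|\gtrsim d'^{\,2}\,\diam(K)$ --- it contains a $d'$-neighbourhood of a path joining two diameter-realizing points, by the interior ball condition in (G1) --- to get, for all $M$ large enough, $\sum_{K\subset Q_M}\diam(K)^4\lesssim\int_{Q_M}\diam(K_{x,F'})^3\,dx\lesssim|Q_M|$; applied with $M=N+\diam K$ this forbids components of diameter $\gtrsim N^{3/4}$ from meeting $Q_N$, so $R_N=O(N^{3/4})=o(N)$. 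Plugging $|Q_{N'}|/|Q_N|\to1$ into the display and taking $\limsup_{N}$ gives the ratio in \eqref{H2} bounded by $M'+Cc_\kappa<+\infty$; this proves \eqref{H2} for $F^\kappa$ (the constant of course blowing up as $\kappa\to0$, which is harmless for the sequel).
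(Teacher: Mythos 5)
Your proof is correct and its core reduction is the same as the paper's: transfer the data $\{b_{IJe}\}$ from $Gr(F^\kappa_N)$ to $Gr(F')$ on a slightly larger box, take potentials that are constant on each $F'$-parent (this is exactly the paper's choice $u_I=u'_{I'}$), observe that the edges of $\Ed(F^\kappa)\setminus\Ed(F')$ connect components with the same parent and carry weight $\mu_e\le|\ln\kappa|$, and finish with H\"older using $\sharp\Ed(F^\kappa_N)\le C|Q_N|$ and \eqref{H2} for $F'$ on the enlarged box. The only genuinely different ingredient is how you justify that the parents of components sitting in $Q_N$ fit into a comparable box: the paper proves $\sup\{\diam(I'):I'\in\CC(F'),\,I'\cap Q_N\neq\emptyset\}=o(N)$ by Borel--Cantelli, covering $\pa Q_{N+\frac12}$ by $\sim N^2$ deterministic points and using stationarity plus $\E\,\diam(I_{0,F'})^3<\infty$ (so that $\sum_N N^2\,\PP(\diam\ge\eta N)<\infty$), and then simply takes $N'=2N$; you instead use the spatial ergodic theorem for $x\mapsto\diam(I_{x,F'})^3$ together with the deterministic bound $|K|\ge c\,d'^2\diam(K)$ to get the quantitative estimate $R_N=O(N^{3/4})$, which is more than enough (any enlargement with $|Q_{N'}|\le C|Q_N|$ suffices, since \eqref{H2} only asks for a finite $\limsup$). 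One small caveat: your parenthetical justification of $|K|\gtrsim d'^2\diam(K)$ --- that $K$ ``contains a $d'$-neighbourhood of a path joining two diameter-realizing points'' --- is not literally true (points of such a path may lie within $d'$ of $\pa K$, e.g.\ at its endpoints), but the inequality itself is correct and elementary: slice $K$ by slabs of width $2d'$ transverse to a diameter direction, note that each intermediate slab contains a boundary point of $K$ and hence, by the interior ball condition in (G1), an interior ball of radius $d'$ whose projection stays within that slab and its two neighbours, and sum the volumes of the disjoint balls obtained from every third slab. With that repair, your argument is complete, and the resulting constant $M'+C|\ln\kappa|$ blowing up as $\kappa\to0$ is indeed harmless, exactly as in the paper.
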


\mspace
{\em Proof.} We will first show that, 
\begin{equation} \label{uniform_bound_inclusion_kappa}
 \text{ almost surely, } \quad \forall \kappa \in \mathbb{Q} \cap  (0,1], \:   \sup_{I \in CC(F^\kappa), I \cap Q_N \neq \emptyset } \diam(I) = o(N)
 \end{equation}
 Indeed, let $\kappa \in \mathbb{Q} \cap  (0,1]$. Clearly, $\diam(I_{0,F^\kappa}) \le \diam(I_{0,F'})$, so 
 $\: \E \diam(I_{0,F^\kappa})^3 < +\infty$.  Let $\eta > 0$, and consider the event 
 $$ A_N = \{ \omega, \: \text{there exists $I \in CC(F^\kappa(\omega)), \: I \cap Q_N \neq \emptyset, \: I \cap Q_{N(1+\eta)}^c \neq \emptyset \}$}.  $$
 We recall that all inclusions satisfy an inner sphere condition with uniform deterministic radius. Hence, there exists a (deterministic) set of points   $x_1, \dots,  x_{K_N}$  of $\pa Q_{N+\frac{1}{2}}$ with  $K_N \le C N^2$ for a deterministic constant $C$ and such that   any $I \in CC(F^\kappa)$ with $I \cap Q_N \neq \emptyset$, $I \cap Q_{N(1+\eta)}^c \neq \emptyset$  contains at least an $x_i$. It follows that 
 $$ \mathbb{P}(A_N) \le \sum_{i = 1}^{K_N} \mathbb{P}\big(\diam(I_{x_i, F^\kappa}) \ge \eta N\big)  \le C N^2 \mathbb{P}\big(\diam(I_{0, F^\kappa}) \ge \eta N\big) $$
The moment bound implies that  $\sum P(A_N) < +\infty$, and it follows from Borel-Cantelli Lemma that $ \mathbb{P}(\limsup A_N) =0$. In other words, for all $\eta > 0$, for $\omega$ in a set of full measure,  there exists $N$ such that 
$$ \sup_{I \in CC(F^\kappa), I \cap Q_N \neq \emptyset } \diam(I) \le \eta N $$
By taking a countable subset of $\eta$ (and as $\kappa$ describes the countable subset $\mathbb{Q}  \cap  (0,1]$), one can find a set of full measure independent of $\kappa$ and $\eta$, which proves \eqref{uniform_bound_inclusion_kappa}. Let us remark that for $\kappa$ large enough, namely for $\kappa \ge \delta'$ with $\delta'$ the constant in (G1) associated to $F'$, one has $F^\kappa = F'$, so that \eqref{uniform_bound_inclusion_kappa} implies 
\begin{equation} \label{uniform_bound_inclusion}
 \text{ almost surely, } \quad   \sup_{I' \in CC(F'), I' \cap Q_N \neq \emptyset } \diam(I') = o(N)
 \end{equation}

\mspace
We now turn to the proof of the lemma. Let $N \ge 1$, and  $\{b_{IJe}\}$ a family indexed by $I,J \in F^\kappa_N$, $e \in \mathrm{Ed}(F^\kappa_N)$,  $I \overset{e}{\leftrightarrow} J$. By \eqref{uniform_bound_inclusion}, for $N$ large enough each $I \in \mathrm{CC}(F^\kappa_N)$ is included in a connected component of $F'_{2N}$. We define a family 
$\{b'_{I'J'e} \}$ indexed by $I',J' \in F'_{2N}$, $e \in \mathrm{Ed}(F'_{2N})$, $I \overset{e}{\leftrightarrow} J$ in the following way: 
\begin{itemize}
\item if $I'$ or $J'$ does not contain any element of $\mathrm{CC}(F^\kappa_N)$, $b'_{I'J'e} = 0$
\item if $I'$  and $J'$ contain elements of $\mathrm{CC}(F_\kappa^N)$, but $e \not\in \mathrm{Ed}(F^\kappa_N)$,   $b'_{I'J'e} = 0$
\item if  $I'$  and $J'$ contain elements of $\mathrm{CC}(F_\kappa^N)$, and $e \in \mathrm{Ed}(F^\kappa_N)$,  $b'_{I'J'e} = b_{IJe}$, where $I,J$ are the unique elements in $\mathrm{CC}(F^\kappa_N)$ such that $I \overset{e}{\leftrightarrow} J$.
\end{itemize}
  We then introduce the family $\{u'_{I'}\}$ indexed by $\mathrm{CC}(F'_{2N})$ such that 
  $$ \mE(F'_{2N}, \{u'_{I'}\}, \{b'_{I'J'e} \}) = \inf_{\{t'_{I'}\}}  \mE(F'_{2N}, \{t'_{I'}\}, \{b'_{I'J'e} \}). $$
 We then define a family $\{u_I\}$ indexed by $I \in \CC(F^\kappa_N)$, as follows: 
 $$ u_I = u'_{I'} \quad \text{ for $I'$ the single c.c. of  $F'_{2N}$ containing $I$}.    $$
 With this choice, we have 
 $$   \sum_{I',J' \in \mathrm{CC}(F'_{2N})} \sum_{\substack{e \in \mathrm{Ed}(F'_{2N}), \\ I' \overset{e}{\leftrightarrow} J'}} |b_{I'J'e}|^s   \le \sum_{I,J \in \mathrm{CC}(F^\kappa_N)} \sum_{\substack{e \in \mathrm{Ed}(F^\kappa_N), \\ I \overset{e}{\leftrightarrow} J}} |b_{IJe}|^s     $$
 and 
 $$ \sum_{I \in \mathrm{CC(F^\kappa_N)}} |I | \, |u_I|^2 \le C   \sum_{I' \in \mathrm{CC(F'_{2N})}} | I' | \, |u'_{I'}|^2 $$
 and 
 \begin{align*}
  \mE(F^\kappa_N, \{u_I\}, \{b_{IJe}\}) \: & \le \:   C \,  \mE(F'_{2N}, \{u'_{I'}\}, \{b'_{I'J'e}\}) \\ 
&  + \:  \sum_{I' \in \mathrm{CC(F'_{2N})}} \sum_{\substack{I,J \in \mathrm{CC(F^\kappa_N)}, \\ I,J \subset I'}. } \sum_{\substack{e \in \mathrm{Ed}(F^\kappa_N), \\ I \overset{e}{\leftrightarrow} J}}  |b_{IJe} - b_{JIe}|^2  \mu_e.
 \end{align*}
 Now, by definition of $F^\kappa$, connected components $I,J$ of $F^\kappa$  that are included in a single connected component $I'$ of $F'$ are at distance at least $\kappa$, so that $\mu_e \le |\ln \kappa|$. Hence, the last term is bounded by 
\begin{align*}
 C    |\ln \kappa| \sum_{I,J \in  \mathrm{CC(F^\kappa_N)}} \sum_{\substack{e \in \mathrm{Ed}(F^\kappa_N), \\ I \overset{e}{\leftrightarrow} J}}  |b_{IJe}|^2 \:  \le \:  C'    |\ln \kappa|  \,  |Q_N| \, \Big( \frac{1}{|Q_N|}\sum_{I,J \in  \mathrm{CC(F^\kappa_N)}} \sum_{\substack{e \in \mathrm{Ed}(F^\kappa_N), \\ I \overset{e}{\leftrightarrow} J}}  |b_{IJe}|^{s} \Big)^{\frac{2}{s}}.
   \end{align*}
The result follows easily  from assumption \eqref{H2} for $F'$ applied with $2N$ and previous inequalities.

\bspace
We have now all ingredients to perform the proof of our main Theorem \ref{thm_homog}, for a general admissible short $F'$. First, by Remark iii) after Theorem \ref{thm_homog},  $A^0 = (B^0)^{-1}$ is well-defined. As in section \ref{subsec_without}, given $f \in L^{6/5}(U)$, one has  $(u^\eps)_{\eps}$ bounded in  $H^1_0(U)$, and the goal is to show that any weak accumulation point $u^0$ satisfies \eqref{target_system}.

\mspace
Similarly, we introduce $\xi \in \R^3$,   $Z$ the minimizer of problem \eqref{VF_B0}, $\bar{Z}$ its extension by $-\xi$ in $\mF$, and   $\bar{z}(y,\omega) = \bar{Z}(\tau_y(\omega))$.  Let $p^\eps = \na u^\eps$. 
Let $F^{\kappa, \eps}$ defined as in \eqref{defi:rescaled}, replacing $F$ by $F^\kappa$ and $\delta_0$ by $\delta_0/2$ :
\begin{equation*} 
 I^{\kappa, \eps} := \eps I^\kappa \quad \forall I^\kappa \in \mathrm{CC}(F^\kappa), \quad  F^{\kappa, \eps} = \bigcup_{\substack{I^{\kappa,\eps} \subset U, \\ d(I^{\kappa,\eps}, \pa U) \ge \frac{\delta_0}{2} \eps}}  I^{\kappa, \eps}.
 \end{equation*}
We would like to extend the function $p^\eps\vert_{U\setminus F^{\kappa, \eps}}$ into some $P^{\kappa, \eps}$ satisfying 
$$ \div P^{\kappa,\eps} = f 1_{U\setminus F^\eps}  \quad \text{in }  U, $$
relying on  Proposition \ref{prop_div_extensions} and the fact that $F^\kappa$ satisfies \eqref{H2}. Note that for all $I^{\kappa,\eps}$ in  $\CC(F^{\kappa,\eps})$, one has the compability condition 
\begin{align*}
 \int_{\pa I^{\kappa,\eps}} \pa_\nu u^\eps & =  \sum_{I^\eps \in \CC(F^\eps), I^\eps \subset I^{\kappa,\eps}} \int_{\pa I^\eps} \pa_\nu u^\eps + \int_{\pa (I^{\kappa,\eps}\setminus F)} \pa_\nu u^\eps  \\
 &=  \int_{\pa (I^{\kappa,\eps}\setminus F^\eps)} \pa_\nu u^\eps = \int_{I^{\kappa, \eps}\setminus F^\eps} \Delta u^\eps =  \int_{I^{\kappa, \eps}\setminus F^\eps} f = \int_{I^{\kappa, \eps}} f 1_{U\setminus F^\eps}
\end{align*} 
But there is a little technicality here, due to the fact that $U\setminus F^{\kappa, \eps}$ is not necessarily  included in $U \setminus F^\eps$ so that {\it a priori} $\div p^\eps \neq f$ on $U \setminus F^{\kappa,\eps}$. This is due to the connected components $I$  of $F$ contained in connected components $I^\kappa$ of $F^\kappa$, such that $I^\eps \subset F^\eps$ and $I^{\kappa,\eps} \not \subset F^{\kappa, \eps}$.  Still, one can easily replace such connected components $I^{\kappa,\eps}$ by smaller connected closed sets $\tilde{I}^{\kappa,\eps}$ with $I^\eps \subset \tilde{I}^{\kappa,\eps} \subset I^{\kappa,\eps}$,  $d(\tilde{I}^{\kappa,\eps}, \pa U) \ge \frac{\delta_0}{2} \eps$, and such that Proposition \ref{prop_div_extensions} applies to  $\tilde{F}^{\kappa,\eps} = \cup \tilde{I}^{\kappa,\eps}$ instead of $F^{\kappa,\eps}$. Roughly speaking, one has just to erase in $I^{\eps,\kappa}$ the "beads" that intersect $\{x, d(x,\pa U) \ge \frac{\delta_0}{2} \eps\}$.   For brevity, we leave to the reader to verify that no complication occurs replacing $F^{\kappa,\eps}$ by $\tilde{F}^{\kappa, \eps}$, and keep the former notation. 

\mspace
Eventually, by applying Proposition \ref{prop_div_extensions}, we obtain a field $P^{\kappa, \eps} \in L^2(U)$ with 
$$ P^{\kappa,\eps}\vert_{U\setminus F^{\kappa,\eps}} = p^\eps, \quad \div P^{\kappa,\eps} = f 1_{U\setminus F^\eps},  $$
Moreover,  for all $t < s'$, where $s$ in the exponent in \eqref{H2}, if the moment condition   \eqref{moment_bound_F} is satisfied for $p = p(t)$ large enough, one has 
$$ \| P^{\kappa, \eps} \|_{L^{t}(U)} \le C_{\kappa,t}. $$
By diagonal extraction, there exists a subsequence in $\eps$ common to all $\kappa \in \Q \cap (0,1]$, and $P^{\kappa, 0}$ in $L^t(U)$ such that 
ignoring the subsequence in the notation: 
$$ P^{\kappa,\eps} \: \xrightarrow[\eps \rightarrow 0]{} P^{\kappa,0} \: \text{ weakly in } \: L^t(U), \quad \forall \kappa. $$ 
Let $\varphi \in C^\infty_c(U)$. Proceeding exactly  as in Section \ref{subsec_without} for the proof of \eqref{divcurl1}, we find
\begin{equation} \label{divcurl1'}  
\int_{U} \varphi(x) \na u^\eps(x) \cdot (\xi +  \bar{z}(x/\eps))  dx  \: \xrightarrow[\eps \rightarrow 0]{} \: \int_{U}  \varphi(x) \na u^0(x) \cdot \xi dx 
\end{equation}
The novel difficulty lies in the adaptation of the proof of \eqref{divcurl2}. We shall prove that 
\begin{equation} \label{divcurl2'}  
\int_{U} \varphi(x) \na u^\eps(x) \cdot (\xi +  \bar{z}(x/\eps))  dx  \: \xrightarrow[\eps \rightarrow 0]{}  \: \int_{U}  \varphi(x) P^{\kappa,0}(x) \cdot B^0 \xi dx + \eta(\kappa), \quad \eta(\kappa) \xrightarrow[\kappa \rightarrow 0]{}  0. 
\end{equation}
Comparing \eqref{divcurl1'} and \eqref{divcurl2'}, we get 
 $$ \int_{U}  \varphi(x) P^{\kappa,0}(x) \cdot B^0 \xi dx  \xrightarrow[\kappa \rightarrow 0]{} \int_{U}  \varphi(x) \na u^0(x) \cdot \xi dx 
 $$ 
 which shows that $P^{\kappa,0}$ converges in the sense of distributions to $A^0 \na u^0$. But we also have  
 $$ \div P^{\kappa, \eps} = f 1_{U \setminus F^\eps} \quad \text{ in } \: U $$
so that sending $\eps$ to zero,  
 $$ \div P^{\kappa,0} = (1-\lambda) f \quad \text{ in } \: U $$
and finally, sending $\kappa$ to zero, we get \eqref{target_system}. 

\mspace
It remains to show \eqref{divcurl1'}. We take into account that $\na u^\eps(x) = 0$,  $\xi +  \bar{z}(x/\eps) = 0$ in $F^\eps$ , and  write:  
\begin{align*}
& \int_{U} \varphi(x) \na u^\eps(x) \cdot (\xi +  \bar{z}(x/\eps))  dx  \\
 = & \int_{U\setminus F^{\kappa,\eps}} \varphi(x)  P^{\kappa,\eps}   \cdot (\xi +  \bar{z}(x/\eps))   dx +  \int_{F^{\kappa,\eps}} \varphi(x)  \na u^\eps   \cdot (\xi +  \bar{z}(x/\eps))   dx \\
= & \int_{U} \varphi(x)  P^{\kappa,\eps}   \cdot (\xi +  \bar{z}(x/\eps))   dx  + \int_{F^{\kappa,\eps}\setminus F^\eps} \varphi(x)  \na u^\eps   \cdot (\xi +  \bar{z}(x/\eps))  dx - 
  \int_{F^{\kappa,\eps}\setminus F^\eps} \varphi(x)  P^{\kappa,\eps}  \cdot (\xi +  \bar{z}(x/\eps))  dx \\
= & \: I^{\kappa,\eps} + J^{\kappa, \eps} - K^{\kappa, \eps}. 
 \end{align*}
 The first integral can be treated as in Section \ref{subsec_without}, resulting in 
 \begin{equation*}
 I^{\kappa,\eps}   \xrightarrow[\eps \rightarrow 0]{}   \int_{U}  \varphi(x) P^{\kappa,0}(x) \cdot B^0 \xi dx  
 \end{equation*}
 The second integral is bounded by 
 \begin{equation*}
|J^{\kappa,\eps}|   \le  \|\varphi\|_{L^\infty} \,  \|\na u^\eps\|_{L^2(U)} \, \| \xi +  \bar{z}(\cdot/\eps)\|_{L^2(F^{\kappa,\eps}\setminus F^\eps)}  \le C \, \| \xi +  \bar{z}(\cdot/\eps)\|_{L^2(U \cap \eps (F^\kappa \setminus F))} \\ 
\end{equation*}
 where we have used the uniform bound on $\na u^\eps$ in $H^1_0(U)$. From the ergodic theorem, we infer that 
 $$ \limsup_{\eps \rightarrow 0} |J^{\kappa,\eps}|  \le C \Big(  \int_{\Omega} \int_{Q_1} 1_{F^\kappa(\omega)\setminus F(\omega)}(y) \, | \xi + \bar{z}(y,\omega)|^2 \, dy \, d\mathbb{P}(\omega) \Big)^{1/2}. $$
The integral at the right-hand side converges to zero as $\kappa \rightarrow 0$: it follows from the dominated convergence theorem and the pointwise convergence to zero of  $(\omega, y) \rightarrow 1_{Q_1 \cap (F_\kappa(\omega) \setminus F(\omega))}(y)$,  because 
$$ \big( F^\kappa(\omega)\setminus F(\omega)\big) \cap Q_1 = \emptyset  \quad \text{for $\kappa$ large enough.}  $$ 
Hence,
\begin{equation}
\limsup_{\eps \rightarrow 0} |J^{\kappa,\eps}|  = o(\kappa)
\end{equation}

\mspace
We still have to control 
\begin{align*}
K^{\kappa,\eps}  =  \sum_{I^{\kappa,\eps} \in \CC(F^{\kappa,\eps})} \Big( & \int_{I^{\kappa,\eps}\setminus F^\eps} \varphi(x_{I^{\kappa,\eps}})   P^{\kappa,\eps}  \cdot (\xi +  \bar{z}(x/\eps))  dx \\
 + & \int_{I^{\kappa,\eps}\setminus F^\eps} \big( \varphi(x) - \varphi(x_{I^{\kappa,\eps}}) \big)  P^{\kappa,\eps}  \cdot (\xi +  \bar{z}(x/\eps))  dx \Big)  = K^{\kappa,\eps}_1 + K^{\kappa,\eps}_2
\end{align*}
where $x_{I^{\kappa,\eps}}$ is the center of mass of $I^{\kappa,\eps}$. We recall that   $\xi +  \bar{z}(x/\eps) = \na w^\eps(x)$ is a potential field that converges weakly in $L^2(U)$ to $B^0 \xi$ as $\eps \rightarrow 0$. By a proper choice of the additive constant in $w^\eps$, we can always assume that  $w^\eps$ converges weakly in $L^6(U)$ to $x \rightarrow (B^0 \xi) \cdot x$. 
Now, we write 
\begin{align*}
  \int_{I^{\kappa,\eps}\setminus F^\eps}  P^{\kappa,\eps}  \cdot (\xi +  \bar{z}(x/\eps))  dx = &  \int_{I^{\kappa,\eps}}  P^{\kappa,\eps}  \cdot \na w^\eps  dx  = \int_{I^{\kappa,\eps} \setminus F} f w^\eps + \int_{\pa I^{\kappa,\eps}} P^{\kappa,\eps} \cdot \nu  \, w^\eps \\
  = &  \int_{I^{\kappa,\eps} \setminus F^\eps} f w^\eps  + \int_{\pa I^{\kappa,\eps}} \pa_\nu u^\eps w^\eps = \int_{I^{\kappa,\eps} \setminus F^\eps} f  w^\eps  +   \int_{\pa (I^{\kappa,\eps}\setminus F^\eps)} \pa_\nu u^\eps  w^\eps 
 \end{align*}
 Note that for the last equality, we have used that $\int_{I^\eps} \pa_\nu u^\eps = 0$ for all $I^\eps \in \CC(F^\eps)$, and that $w^\eps$ is a constant in each $I^\eps$. Finally, 
 $$   \int_{\pa (I^{\kappa,\eps}\setminus F^\eps)} \pa_\nu u^\eps  w^\eps   = \int_{I^{\kappa,\eps}\setminus F^\eps} \na u^\eps \cdot \na w^\eps =  \int_{I^{\kappa,\eps}\setminus F^\eps} \na u^\eps \cdot (\xi + \bar{z}(\cdot/\eps)) $$
 resulting in 
\begin{align*}
\big| K^{\kappa,\eps}_1 \big| \le  \|\varphi\|_{L^\infty} \Big(    \|w^\eps \|_{L^6(U)} \, \| f 1_{F^{\kappa, \eps} \setminus F^\eps}\|_{L^{6/5}(U)}  +  \|\na u^\eps\|_{L^2(F^{\kappa,\eps} \setminus F^\eps)}  \|\xi + \bar{z}(\cdot/\eps)\|_{L^2(F^{\kappa,\eps} \setminus F^\eps)} \Big).  
\end{align*}
By using the uniform $L^6$ bound on $w^\eps$, the uniform $L^2$ bound on $\na u^\eps$ and the ergodic theorem, we end up with 
\begin{align*}
 \limsup_{\eps \rightarrow 0}  \big| K^{\kappa,\eps}_1 \big| & \le C   \|f\|_{L^{6/5}(U)} \, \Big(   \int_{\Omega} \int_{Q_1} 1_{F^\kappa(\omega)\setminus F(\omega)}(y)  \, dy \, d\mathbb{P}(\omega) \Big)^{5/6}     \\
 & + \int_{\Omega}  \int_{Q_1} 1_{F^\kappa(\omega)\setminus F(\omega)}(y) \, | \xi + \bar{z}(y,\omega)|^2 \, dy \, d\mathbb{P}(\omega) \Big)^{1/2} = o(\kappa)
 \end{align*}
 as seen above. It remains to treat 
 \begin{align*}
\big| K^{\kappa,\eps}_2 \big|  & \le \| \na \varphi \|_{L^\infty} \, \eps \,  \|P^{\kappa,\eps}\|_{L^2(F^{\kappa,\eps} \setminus F^\eps)}  \|\xi + \bar{z}(\cdot/\eps)\|_{L^2(F^{\kappa,\eps} \setminus F^\eps)}.  \\
& \le  C' \eps  \|P^{\kappa,\eps}\|_{L^2(F^{\kappa,\eps} \setminus F^\eps)} 
\end{align*}
using the ergodic theorem to bound the factor   $\|\xi + \bar{z}(\cdot/\eps)\|_{L^2(F^{\kappa,\eps} \setminus F^\eps)}$. The last difficulty is to bound $\|P^{\kappa,\eps}\|_{L^2(F^{\kappa,\eps} \setminus F^\eps)}$, because we only have so far a control in $L^t$, for any $t < s'$.   Still,  under a large moment bound on $\diam(I_{0,F})$, {\it cf. } \eqref{moment_bound_F},  we will now show that 
\begin{equation} \label{extra_int_Zkappa} 
 \|P^{\kappa,\eps}\|_{L^2(F^{\kappa, \eps}\setminus F^\eps)} \le C \eps^{3/2 - 3/t}. 
 \end{equation}
Indeed, following the proof of Proposition \ref{prop_div_extensions}, we see  that inside each inclusion $I^{\kappa, \eps}$, one has $P^{\kappa,\eps} = \na w +  \na v^{\kappa, \eps} $, where 
 $w$ solves 
 $$ \Delta w = f 1_{U \setminus F^\eps} \quad \text{ in } U, \quad w\vert_{\pa U} = 0 $$
 so that  in particular
 $$ \|\na w \|_{L^2(U)} \le C \|f\|_{L^{6/5}(U)},$$ 
while  $v^{\kappa, \eps}$ is the solution, mean-free over  $\mathring{I}^{\kappa,\eps}$, of 
 $$ \Delta  v^{\kappa, \eps} = 0 \quad \text{ in } \mathring{I}^{\kappa,\eps}, \quad \pa_\nu v^{\kappa,\eps}\vert_{\pa I^{\kappa,\eps}} = \pa_\nu (u^\eps-w)  $$
A crucial point is that $I^\kappa \setminus F$ is a union  of bridges, that are  at some uniform distance $r > 0$ from all other inclusions. Denoting $I^\kappa_j$, $j = 1 \dots$ such bridges, and introducing for all $j$, the  $r/2$ neighborhood  $V^\kappa_j$ of $I^\kappa_j$, we claim that 
 \begin{align*}
  \|\na v^{\kappa,\eps}\|^2_{L^2(\eps I^\kappa_j)} & \le  C   \left(  \eps^{-2} \|v^\kappa\|^2_{L^{2}(\eps(V^\kappa_j \cap I^\kappa))}   + \| \pa_\nu (u^\eps-w) \|_{H^{-1/2}(\eps ( \pa I^{\kappa} \cap \,  V^\kappa_j))}  \right) \\ 
  & \le  C'   \left(  \eps^{-2} \|v^\kappa\|^2_{L^{2}(\eps(V^\kappa_j \cap I^\kappa))}   +  \|\na (u^\eps-w)\|^2_{L^2(\eps (V^\kappa_j \setminus I^\kappa))} \right)  
  \end{align*} 
 Indeed, by a scaling argument, it is enough to consider the case $\eps = 1$. The first inequality follows then from a  standard elliptic regularity result, while the second one follows from the usual bound 
 $$ \| U \cdot \nu \|_{L^2(\pa \mathcal{O})} \le C \left(  \| \div U \|_{L^2(\mathcal{O})} + \| U \|_{L^2(\mathcal{O})} \right) $$ 
 applied in the domain $\mathcal{O} = V^\kappa_j \setminus I^\kappa$. Here, we rely on the fact that such domains are far away from other inclusions, so that the constant can be taken uniform in $j$. 
 
 \mspace
 Summing over all $j$'s and all inclusions $I^\kappa$, we end up with 
\begin{align*}
 \|\na v^{\kappa,\eps}\|^2_{L^2(F^{\kappa,\eps} \setminus F^\eps)} & \le  C   \left(  \eps^{-2} \sum_{I^{\kappa,\eps}} \|v^\kappa\|^2_{L^2(I^{\kappa,\eps})}   +  \|\na (u^\eps-w)\|_{L^2(U)}^2  \right) \\
 & \le C'   \Big(  \eps^{-2} \sum_{I^{\kappa,\eps}} \|v^\kappa\|^2_{L^2(I^{\kappa,\eps})}   +  1 \Big) 
 \end{align*}
 As $s > 6$, one has $s' > \frac{6}{5}$. Hence, for $t$ close enough to $s'$, $W^{1,t}(I^{\kappa,\eps}) \subset L^2(I^{\kappa,\eps})$, with 
 $$ \|v^\kappa\|_{L^2(I^{\kappa,\eps})} \le C \, \diam(I^\kappa)^r \, \eps^{\frac{5}{2} - \frac{3}{t}} \|\na v^\kappa\|_{L^t(I^{\kappa,\eps})}  $$
 Again, the power of $\eps$ is deduced from a scaling argument, while the factor $\diam(I^\kappa)^r$ comes from the Poincaré inequality \eqref{PW} applied to the mean-free function $v^\kappa$. We get eventually  
\begin{align*} 
\|P^{\kappa,\eps}\|^2_{L^2(F^{\kappa, \eps}\setminus F^\eps)} & \le C  \Big(  \eps^{3 - \frac{6}{t}}  \sum_{I^{\kappa,\eps}} \diam(I^\kappa)^{2r} \|P^{\kappa,\eps}\|_{L^t(I^{\kappa,\eps})}^2  + 1 \Big)  \\
& \le C  \Big(  \eps^{3 - \frac{6}{t}}      \sum_{I^{\kappa,\eps}} \diam(I^\kappa)^{2r} \|P^{\kappa,\eps}\|_{L^t(I^{\kappa,\eps})}^t \| P^{\kappa, \eps} \|_{L^t(U)}^{2-t}   + 1 \Big) \\
& \le C'  \Big(  \eps^{3 - \frac{6}{t}}   \sum_{I^{\kappa,\eps}} \diam(I^\kappa)^{2r} \|P^{\kappa,\eps}\|_{L^t(I^{\kappa,\eps})}^t + 1 \Big)
\end{align*}   
Using H\"older inequality, we find for any $\tilde t$ such that $t < \tilde t < s'$: 
$$  \sum_{I^{\kappa,\eps}} \diam(I^\kappa)^{2r} \|P^{\kappa,\eps}\|_{L^t(I^{\kappa,\eps})}^t  \le C \Big( \int_{U} |\diam(I_{x/\eps, F^\kappa})|^p\Big)^{2/p}  \|P^{\kappa,\eps}\|_{L^{\tilde t}(U)}^{t/\tilde t} $$ 
  where $p = 2 r (\tilde t/t)'$. The first factor is bounded thanks to the ergodic theorem and \eqref{moment_bound_F}, while the second one is bounded thanks to the uniform bound for $P^{\kappa, \eps}$ in $L^{\tilde t}(U)$. Inequality    \eqref{extra_int_Zkappa} follows. 
  
\mspace
Back to $K^{\kappa,\eps}_2$, we deduce that,
$$ |K^{\kappa,\eps}_2| \le C \eps^{5/2 - 6/t} $$
which goes to zero taking $t$ close enough to $s'$, by the condition $s < 6$. This concludes the proof of the theorem.

\section{Discussion of the assumptions} \label{section_disc_assumptions}
We start here an extended discussion of the  assumptions \eqref{H1}-\eqref{H2}. We remind the definition of $\mE$: 
\begin{equation*}
\mE\Big(F, \{u_I\}, \{b_{IJe}\}\Big) =   \sum_{I,J \in \mathrm{CC}(F)} \sum_{\substack{e \in \mathrm{Ed}(F), \\ I \overset{e}{\leftrightarrow} J}}  \mu_e  |b_{IJe} - b_{JIe} + u_I - u_J |^2 + \sum_{I \in \mathrm{CC}(F)}| I | \,   |u_I|^2. 
\end{equation*}
It follows from this definition that for  closed sets  $\overline{F} = F \cup G$ where the union is disjoint (so that  $Gr(\overline{F})$ is deduced from $Gr(F)$ by the addition of nodes and/or edges), one has 
\begin{equation} \label{prop_E_1}
\mE\Big(F, \{u_I\}, \{b_{IJe}\}\Big) \le \mE\Big(\overline{F}, \{\overline{u}_I\}, \{\overline{b}_{I,J,e}\}\Big)
\end{equation}
for any extensions $\{\overline{u}_I\}, \{\overline{b}_{I,J,e}\}$ of $\{u_I\},  \{b_{IJe}\}$, meaning that 
$$ \overline{u}_I = u_I, \quad \forall I \in \mathrm{CC}(F), \quad \overline{b}_{I,J,e} = b_{IJe}, \quad \forall I,J \in \mathrm{CC}(F), \quad e \in \mathrm{Ed}(F), \: 
I \overset{e}{\leftrightarrow} J. $$
Indeed, the sum in the right-hand side of \eqref{prop_E_1} has more (positive) terms than the one at the left-hand side. 

\mspace
\subsection{Discussion around \eqref{H1}}

\subsubsection*{\eqref{H1} for a short implies \eqref{H1}}
An important property of the discrete energy above concerns closed sets $F \subset F'$ with $F'$ a short of $F$, {\it cf.} Definition \ref{defi:short_inclusions}. Given  $\{u'_{I'}\}$, indexed by $I' \in \mathrm{CC}(F')$,  a family associated to $F'$, one can associate a family $\{u_I\}$ indexed by $I \in \mathrm{CC}(F)$ as follows: 
\begin{equation} \label{defuiu'i}
u_I  :=  \xi \cdot x_I + u'_{I'} - \xi \cdot x_{I'}, \quad \forall I \in \mathrm{CC}(F), \: I' \in \mathrm{CC}(F') \quad \text{such that } \: I \subset I'. 
\end{equation}
We remind that for any set $S$, $x_S$ is the center of mass of $S$.  Note that by definition of a short, any connected component of $F'$ contains at least one connected component of $F$. We then claim that  for $\mathrm{CC}(F)$ finite, 
\begin{equation} \label{prop_E_2}
\mE\Big(F, \{u_I\}, \{\xi \cdot x_I \}\Big) \le C  \Big( \mE\Big(F', \{u'_{I'}\}, \{\xi \cdot x_{I'}\}\Big)  +  \sum_{I' \in \mathrm{CC}(F')}  | I' | \, \diam(I')^2   \Big). 
\end{equation}
Indeed, introducing $v_I := u_I - \xi \cdot x_I$, resp. $v'_{I'} = u_{I'} - \xi \cdot x_{I'}$, we write 
\begin{align*}
\mE\Big(F, \{u_I\}, \{\xi \cdot x_I \}\Big) =  \sum_{I,J \in \mathrm{CC}(F)}  \mu_{I,J}  |v_I - v_J |^2 + \sum_{I \in \mathrm{CC}(F)} | I | \,  |v_I + \xi \cdot x_I|^2  =: E_1 + E_2, 
\end{align*}
where $\mu_{I,J} =  \sum_{e \in \mathrm{Ed}(F),  I \overset{e}{\leftrightarrow} J} \mu_e$. 
As $v_I = v_J$ when $I,J$ are included in the same inclusion of $F'$, the first term can be bounded by 
\begin{align*}
E_1 & \le  \sum_{I' \neq J' \in \mathrm{CC}(F')} \sum_{\substack{I,J \in \mathrm{CC}(F), \\ I \subset I', J \subset J'  }}   \mu_{I,J}  |v_I - v_J |^2 \\
& \le \sum_{I' \neq J' \in \mathrm{CC}(F')} \mu_{I',J'} \sum_{\substack{I,J \in \mathrm{CC}(F), \\ I \subset I', J \subset J', I \leftrightarrow J }}  |v_I - v_J |^2 
& \le C \sum_{I' \neq J' \in \mathrm{CC}(F')}  \mu_{I',J'}  |v'_{I'} - v'_{J'} |^2 \\
& \le  C \mE\Big(F', \{u'_{I'}\}, \{\xi \cdot x_{I'}\}\Big).
\end{align*}
For the third inequality, we have used the fact that $v_I = v'_{I'}$, $v_J = v'_{J'}$ by our definition of the family $\{u_I\}$, as well as assumption (G2):  the number of gaps between two inclusions $I'$ and $J'$ is finite,  so that 
$$ \sharp \{ I,J \in \mathrm{CC}(F), \quad  I \subset I', J \subset J' , I \leftrightarrow J \} \le C, \quad \text{ for some uniform constant $C$.} $$
 Moreover,  $v_I = v'_{I'}$, $v_J = v'_{J'}$ by our definition of the family $\{u_I\}$). Eventually, 
\begin{align*}
E_2 & = \sum_{I' \in \mathrm{CC}(F')} \sum_{I \in \mathrm{CC}(F), I \subset I'} | I | \, |v'_{I'} + \xi \cdot x_I|^2 \\ 
& \le  2 \sum_{I' \in \mathrm{CC}(F')} \sum_{I \in \mathrm{CC}(F), I \subset I'}  | I | \, ( |v_{I'} + \xi \cdot x_{I'}|^2  +  |\xi \cdot x_{I} - \xi \cdot x_{I'}|^2 ) \\
& \le C \Big( \sum_{I' \in \mathrm{CC}(F')}  | I' | \,  |v_{I'} + \xi \cdot x_{I'}|^2  +  | I' | \, \diam(I')^2 \Big)\\
\end{align*}
which yields \eqref{prop_E_2}. We are now ready to show

\begin{lemma}  \label{lemma_H1H2}

\sspace
Let $F'$ a short of $F$ satisfying 
\begin{align*}
& \limsup_{N \rightarrow +\infty} \frac{1}{|Q_N|} \sum_{I' \in \CC(F'_N)} | I' | \, \diam(I')^2 < +\infty \\
& \sup_{I' \in CC(F'), I' \cap Q_N \neq \emptyset } \diam(I') = o(N)
\end{align*}
 as well as  \eqref{H1}. Then, $F$ itself satisfies  \eqref{H1}.
\end{lemma}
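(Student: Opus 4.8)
The plan is to derive \eqref{H1} for $F$ from \eqref{H1} for $F'$ by means of the comparison inequality \eqref{prop_E_2} established just above the lemma, applied at a doubled spatial scale. Recall that \eqref{prop_E_2} says that if $\{u'_{I'}\}$ is any family indexed by $\CC(F')$ and $\{u_I\}$ is built from it through \eqref{defuiu'i}, then $\mE(F,\{u_I\},\{\xi\cdot x_I\})$ is controlled by $\mE(F',\{u'_{I'}\},\{\xi\cdot x_{I'}\})$ plus the ``geometric'' term $\sum_{I'}|I'|\,\diam(I')^2$. The two hypotheses of the lemma are exactly what is needed to turn this into a statement about the truncated sets $F_N$: the second one, $\sup_{I'\cap Q_N\neq\emptyset}\diam(I')=o(N)$ (which plays here the role of \eqref{uniform_bound_inclusion} in Lemma \ref{lemmaH2H2kappa}), lets us control boundary effects, while the first one handles the geometric term.

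First I would fix $\xi\in\R^3$ and $N$ large. Using the second hypothesis, every $I'\in\CC(F')$ that meets $Q_N$ has diameter $o(N)$, hence, for $N$ large, is contained in $Q_{2N}$; since every $I\in\CC(F_N)$ lies inside some such $I'$, one has $F_N\subseteq F'_{2N}$ and each component of $F_N$ is contained in a unique component of $F'_{2N}$. Moreover, if $I,J\in\CC(F_N)$ sit in distinct components $I',J'\in\CC(F'_{2N})$ and $e$ is an edge of $F_N$ joining them, then $e$ cannot have been bridged when passing from $F$ to $F'$ (else $I'$ and $J'$ would coincide), so $e\in\Ed(F')$, and then $e\in\Ed(F'_{2N})$ and joins $I'$ to $J'$; the number of such edges between a fixed pair $I',J'$ is uniformly bounded by (G2) applied to $F'$. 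These are precisely the three facts used in the proof of \eqref{prop_E_2}, so that argument applies verbatim to the pair $(F_N,F'_{2N})$: choosing a near-minimizer $\{u'_{I'}\}_{I'\in\CC(F'_{2N})}$ of $\{u'\}\mapsto\tfrac1{|Q_{2N}|}\mE(F'_{2N},\{u'\},\{\xi\cdot x_{I'}\})$ and defining $\{u_I\}_{I\in\CC(F_N)}$ by \eqref{defuiu'i}, one gets
\[
\mE\big(F_N,\{u_I\},\{\xi\cdot x_I\}\big)\ \le\ C\Big(\mE\big(F'_{2N},\{u'_{I'}\},\{\xi\cdot x_{I'}\}\big)+\sum_{I'\in\CC(F'_{2N})}|I'|\,\diam(I')^2\Big),
\]
with $C$ depending only on the geometric constants.

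Finally I would divide by $|Q_N|=\tfrac18|Q_{2N}|$, take the infimum over $\{u_I\}$ on the left, bound the first term on the right using that $\{u'_{I'}\}$ was chosen near-minimizing together with \eqref{H1} for $F'$ (along the subsequence $2N$), and bound the second term by the first hypothesis. Taking $\limsup_{N\to\infty}$ then yields $\limsup_N\inf_{\{u_I\}}\tfrac1{|Q_N|}\mE(F_N,\{u_I\},\{\xi\cdot x_I\})<+\infty$, i.e.\ \eqref{H1} for $F$. The only genuinely delicate point is the bookkeeping in the previous paragraph --- making sure that the restriction to boxes does not destroy the incidence structure exploited in \eqref{prop_E_2} --- and this is exactly what the $o(N)$ hypothesis is there to guarantee; everything else is a routine rescaling of a comparison already proved.
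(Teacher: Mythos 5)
Your proposal is correct and follows essentially the same route as the paper: apply the comparison \eqref{prop_E_2} at the doubled scale with a (near-)minimizer for $F'_{2N}$, use the $o(N)$ diameter hypothesis to ensure every component of $F_N$ sits inside a component of $F'_{2N}$, and absorb the geometric term via the first hypothesis before dividing by $|Q_N|$ and taking $\limsup$. The only difference is presentational: the paper introduces an intermediate set $\overline{F}_N=F_N\cup G_N$ of which $F'_{2N}$ is literally a short, applies \eqref{prop_E_2} to that pair and then drops to $F_N$ by the monotonicity \eqref{prop_E_1}, whereas you re-run the \eqref{prop_E_2} argument directly for the pair $(F_N,F'_{2N})$ after checking the needed incidence facts — both are equally valid.
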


\begin{remark} \label{rem_H1_H1short}
In the case where $F = F(\omega)$ is an admissible set of inclusions, and $F'$ is an admissible short of $F$, see Definition \ref{defi:short_inclusions}, the first two conditions above are consequences of the relation 
$$\E \, \diam(I_{0,F'})^3 < +\infty,$$ 
see the proof of   \eqref{uniform_bound_inclusion}. 
\end{remark}

\noindent
{\em Proof}. One must realize once again that for $I \in \mathrm{CC}(F_N)$, and $I' \in \mathrm{CC}(F')$ such that $I \subset I'$, one does not have necessarily $I' \in \mathrm{CC}(F'_N)$. Indeed, it may happen that $I$ is contained in $Q_N$, while $I'$ crosses $\partial Q_N$.  Still, by the second assumption of the lemma,  $I' \in \mathrm{CC}(F'_{2N})$, for $N$ large enough. It follows that $F'_{2N}$ can be seen as the short of a closed set $\overline{F}_N$ with $Q_{2N} \supset \overline{F}_N$, and $\overline{F}_N = F_N \cup G_N$ with a disjoint union.  

\mspace 
Let now $\{u'_{I'}\}$, indexed by $I' \in \mathrm{CC}(F'_{2N})$, satisfying 
$$ \mE(F'_{2N}, \{u'_{I'}\}, \{ \xi \cdot x_{I'} \}) = \inf_{\{t'_{I'}\}}  \mE(F'_{2N}, \{t'_{I'}\}, \{ \xi \cdot x_{I'} \}).   $$
We associate to $\{u'_{I'}\}$ the family $\{u_{I}\}$, indexed by $I \in \overline{F}_N$,  as in \eqref{defuiu'i} (replacing $F$ by $\overline{F}_N$ and $F'$ by $F'_{2N}$).  By \eqref{prop_E_2}, we have 
\begin{align*}
  \mE(\overline{F}_{N}, \{u_I\}, \{ \xi \cdot x_{I} \}) & \le C \left(  \mE(F'_{2N}, \{u'_{I'}\}, \{ \xi \cdot x_{I'} \})  + \sum_{I' \in \CC(F'_{2N})} | I' | \, \diam(I')^2 \right).
  \end{align*}
Furthermore,  by using \eqref{prop_E_1}, we have 
  $$  \mE(F_N, \{u_{I}\}, \{ \xi \cdot x_{I} \})  \le  \mE(\overline{F}_{N}, \{u_{I}\}, \{ \xi \cdot x_{I} \})  $$
  so that combining everything we find  
 \begin{align*}
 \inf_{\{t_I\}}  \mE(F_{N}, \{t_{I}\}, \{ \xi \cdot x_{I'} \}) & \le \mE(F_{N}, \{u_{I}\}, \{ \xi \cdot x_{I} \})  \le  \mE(\overline{F}_{N}, \{u_{I}\}, \{ \xi \cdot x_{I} \})   \\
 & \le C \Big(  \mE(F'_{2N}, \{u'_{I'}\}, \{ \xi \cdot x_{I'} \})  +  \sum_{I' \in \CC(F'_{2N})} | I' | \, \diam(I')^2 \Big) \\
 & \le C \Big( \inf_{\{t'_{I'}\}}  \mE(F'_{N+2D'}, \{t'_{I'}\}, \{ \xi \cdot x_{I'} \}) + \sum_{I' \in \CC(F'_{2N})} | I' | \, \diam(I')^2 \Big). 
\end{align*}
As $F'$ satisfies the first assumption of the lemma and  \eqref{H1}, dividing by $|Q_N|$ and sending $N$ to infinity, we see that $F$ satisfies \eqref{H1}. 

\subsubsection*{Clusters with a moment bound on the diameter}
We prove here 
\begin{lemma}
Let $F = F(\omega)$ an admissible set of inclusions  satisfying the moment bound
$$ \E  \, \diam(C_{0,F})^2 < +\infty $$
where $C_{0,F}$ is the cluster of $F$ containing $0$, {\it cf. }Remark \ref{rem_I0_C0}. Then, $F$ satisfies \eqref{H1}.
\end{lemma}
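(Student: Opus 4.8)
The plan is to verify \eqref{H1} directly, by choosing, for each $\xi \in \R^3$ and each $N$, a competitor family $\{u_I\}$ which annihilates the ``edge part'' of $\mE$ and whose ``node part'' is bounded by an ergodic average of $\diam(C_{0,F})^2$. First I would record that $\E\,\diam(C_{0,F})^2 < +\infty$ forces, almost surely, every cluster of $F$ to be bounded: by Remark \ref{rem_I0_C0} the field $y \mapsto \diam(C_{y,F})^2$ is stationary and, by hypothesis, integrable, so the spatial ergodic theorem gives $\frac{1}{|Q_N|}\int_{Q_N}\diam(C_{y,F})^2\,dy \to \E\,\diam(C_{0,F})^2$ almost surely; in particular $\diam(C_{y,F})<+\infty$ for a.e.\ $y$, and since each inclusion has nonempty interior every cluster contains such a point $y$ and is therefore bounded. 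On this full-measure event each cluster $C$ has a center of mass $x_C$ lying in its closed convex hull.

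Now fix $\xi$ and $N$, and for $I \in \CC(F_N)$ set $u_I := \xi\cdot(x_{C_I}-x_I)$, where $C_I$ is the cluster of $F$ (not of $F_N$) containing $I$. If $I \overset{e}{\leftrightarrow} J$ for some $e \in \Ed(F_N)$, then $I$ and $J$ are $\delta$-close, hence belong to the same cluster of $F$, so $C_I = C_J$ and, with $b_{IJe}=\xi\cdot x_I$,
\[ b_{IJe}-b_{JIe}+u_I-u_J = \xi\cdot x_I - \xi\cdot x_J + \xi\cdot(x_{C_I}-x_I) - \xi\cdot(x_{C_J}-x_J) = \xi\cdot(x_{C_I}-x_{C_J}) = 0. \]
Hence only the node part of $\mE$ survives, and since $x_I$ and $x_{C_I}$ both lie in $\overline{\mathrm{conv}}(C_I)$,
\[ \mE\Big(F_N,\{u_I\},\{b_{IJe}=\xi\cdot x_I\}\Big) = \sum_{I\in\CC(F_N)}|I|\,|u_I|^2 \le |\xi|^2\sum_{I\in\CC(F_N)}|I|\,\diam(C_I)^2. \]
(The sums are finite: by (G1) the inclusions contained in $Q_N$ are pairwise disjoint and each carries a ball of radius $d$.)

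To conclude I would observe that for $y$ in an inclusion $I\subset Q_N$ one has $C_{y,F}=C_I$, so, extending $\diam(C_{y,F})$ by $0$ off $F$,
\[ \sum_{I\in\CC(F_N)}|I|\,\diam(C_I)^2 = \int_{F_N}\diam(C_{y,F})^2\,dy \le \int_{Q_N}\diam(C_{y,F})^2\,dy, \]
and then divide by $|Q_N|$ and let $N\to+\infty$, using the spatial ergodic theorem, to obtain a.s.
\[ \limsup_{N\to+\infty}\ \inf_{\{u_I\}}\ \frac{1}{|Q_N|}\,\mE\Big(F_N,\{u_I\},\{b_{IJe}=\xi\cdot x_I\}\Big) \le |\xi|^2\,\E\,\diam(C_{0,F})^2 < +\infty, \]
which is exactly \eqref{H1}. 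I do not expect a serious obstacle here; the only point needing care is the remark that edges of $Gr(F)$ never cross between clusters, which is what lets the edge part of $\mE$ be killed exactly, together with the minor verification that cluster centers of mass are a.s.\ well defined. One could instead deduce the statement from Lemma \ref{lemma_H1H2} applied to the short $F'$ obtained by bridging all pairs $I\leftrightarrow J$ (so that $\Ed(F')=\emptyset$ and \eqref{H1} holds trivially for $F'$), but one would then have to establish the growth bound $\sup_{I'\cap Q_N\neq\emptyset}\diam(I')=o(N)$ through a dyadic Borel--Cantelli argument, which is slightly heavier than the direct computation above.
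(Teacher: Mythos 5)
Your proposal is correct and follows essentially the same route as the paper's proof: the same competitor $u_I=\xi\cdot(x_{C_I}-x_I)$, the same cancellation of the edge terms within clusters, the bound $|u_I|\le|\xi|\,\diam(C_I)$, the comparison with $\int_{Q_N}\diam(C_{y,F})^2\,dy$, and the ergodic theorem. The extra remarks you add (a.s.\ boundedness of clusters, well-definedness of centers of mass, explicit vanishing of the edge part) are points the paper leaves implicit, but they do not change the argument.
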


\begin{proof}
We want to consider the discrete energy $\mE(F_N, \{ u_I\}, \{\xi \cdot x_I\})$ for a suitable choice of $u_I$'s. For each cluster $\mC$ of $F$, and for each $I \in \CC(F_N)$, $I \subset \mC$, we set 
$u_I := -\xi \cdot (x_I - x_{\mC})$, where as before $x_{S}$ is the center of mass of $S$. Then, 
\begin{align*} 
& \frac{1}{|Q_N|} \mE(F_N, \{ u_I\}, \{\xi \cdot x_I\})   = \frac{1}{|Q_N|} \sum_{I \in \CC(F_N)} | I | \,  |u_I|^2\\
& \leq \frac{|\xi|^2}{|Q_N|} \sum_{\mC \atop \text{cluster of $F$}} \sum_{\substack{I \in \CC(F_N) \\ I \subset \mC }}  | I | \, |x_I-x_{\mC}|^2 
 \leq \frac{|\xi|^2}{|Q_N|} \sum_{\mC \atop \text{cluster of $F$}} \sum_{\substack{I \in \CC(F_N) \\ I \subset \mC }} | I | \, \diam(\mC)^2 \\
& \leq  \frac{|\xi|^2}{|Q_N|} \sum_{\mC \atop \text{cluster of $F$}} \sum_{\substack{I \in \CC(F_N) \\ I \subset \mC }} \int_I \diam(C_{x,F})^2 dx  \le  \frac{|\xi|^2}{|Q_N|} \int_{Q_N} \diam(C_{x,F})^2 dx
\end{align*}
Sending $N$ to infinity, we end up with
$$ \limsup_N \frac{1}{|Q_N|} \mE(F_N, \{ u_I\}, \{\xi \cdot x_I\}) \le |\xi|^2 \, \E \diam(C_{0,F})^2 < +\infty $$
which shows that $F$ satisfies \eqref{H1} and concludes the proof of the lemma. 
\end{proof}

 \subsubsection*{Link with the graph laplacian}
\bspace
Another interesting result starts with the following observation. Assumption \eqref{H1} is verified by $F$ if almost surely, there exists $M = M(\omega)$ such that for any $N>0$,  any $\xi \in \R^3$, 
$$\inf_{\{u_I\}} \: \frac{1}{|Q_N|} \mE\Big(F_N, \{u_I\}, \{\xi \cdot x_I\} \Big) \leq M |\xi|^2.$$
Writing the Euler equations of the minimization problem at the left-hand side leads  to the following linear system 
$$u_I \: + \:  \sum_{J \in \CC(F_N)} \mu_{I,J} (u_I - u_J)  = -  \sum_{J \in \CC(F_N)} \mu_{I,J} (\xi \cdot x_I -  \xi \cdot x_J) \quad \forall I \in \CC(F_N)$$
where we remind that $\mu_{I,J}:=  \sum_{e \in \Ed(F),  I \overset{e}{\leftrightarrow} J} \mu_e$ (and is therefore zero if $I$ and $J$ are not linked by an edge). This linear system can be written into the matrix form :
\begin{equation*}
    (\mathbb{L}_{F_N}+\mathbb{I}) \mathbb{U} = - \mathbb{L}_{F_N} \mathbb{S}
\end{equation*}
where  $\mathbb{U}=((u_I)_{I \in \CC(F_N)})^t$,  $\mathbb{S} =(\xi \cdot x_I)_{I \in \CC(F)})^t$ and $\mathbb{L}_{F_N}$ is a symmetric matrix of size $|\CC(F_N)| \times |\CC(F_N)|$ defined by
\begin{align*}
   \Big[ \mathbb{L}_{F_N} \Big]_{IJ}= \begin{cases}
       \sum_{K \in \CC(F_N)} \mu_{I,K} & \text{if $I=J$}\\
         -\mu_{I,J} &   \text{if $I \neq J$}
    \end{cases}.    
\end{align*}
This kind of matrix arises in the graph literature as the {\em weighted laplacian matrix} for the pondered unoriented graph $Gr(F_N)$, see the first section of \cite{berkolaiko2013introduction}. It can be seen as  a discrete version of a continuous problem of the form
\begin{equation*}
    \left\{
      \begin{aligned}
        - \div(\mu \na u_N) + u_N=   \div(\mu \xi)    \ \text{in} \  Q_N\\
        \mu \na u_N \cdot \nu = 0 \text{ on }  \partial Q_N
      \end{aligned}
    \right. \iff u_N=\argmin_{v \in H^1(Q_N)} \int_{Q_N} \big(\mu \nabla v \cdot \na v + |v|^2 -  2\xi \cdot \na v \big) \dx.
\end{equation*}
The energy of this problem is a {\em superadditive quantity over sets} and one can expect our discrete minimization problem to verify a similar property. We state

\begin{lemma}
Let $\P$ a bounded set of $\R^3$ and denote 

$$F_{\P} = \bigcup_{\substack{I \in CC(F) 
, \\ I \subset \P}} I, \quad Gr(F_\P)= (\CC(F_\P), \Ed(F_\P)) .$$
The quantity $\mathcal{H}(\P):=  \inf_{\{u_I\}}\mE( F_\P, \{u_I\}, \{\xi \cdot x_I \})$ is superadditive over sets, which means that for any decomposition  $\P= \cup_{k=1}^M \P_k$ where $\{\P_1, \dots, \P_M \}$ are pairwise disjoint, one has almost surely

$$\mathcal{H}(\P) \geq \sum_{k=1}^M \mathcal{H}(\P_k).$$

\end{lemma}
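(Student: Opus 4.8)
The plan is to reduce the statement to the elementary monotonicity of $\mE$ under the addition of nodes and edges already recorded in \eqref{prop_E_1}. Fix a realization $\omega$, so that the claim becomes purely deterministic, and fix $\xi \in \R^3$. The first step is bookkeeping on the multigraphs: since $F_\P$, and each $F_{\P_k}$, is a union of whole inclusions $I \in \CC(F)$, one has $\CC(F_\P) = \{I \in \CC(F): I \subset \P\}$ and likewise for each $\P_k$. Thus $F_{\P_k}$ is obtained from $F_\P$ by deleting the inclusions not contained in $\P_k$, which at the level of multigraphs (exactly as in the discussion around \eqref{prop_E_1}, where $Gr(\overline F)$ is deduced from $Gr(F)$ by adding nodes and edges) means that $Gr(F_{\P_k})$ is a sub-multigraph of $Gr(F_\P)$: $\CC(F_{\P_k}) \subset \CC(F_\P)$, $\Ed(F_{\P_k}) \subset \Ed(F_\P)$, with the same weights $\mu_e = \ln|e|$ and the same data $b_{IJe} = \xi\cdot x_I$, these being properties of the inclusions alone and hence independent of the ambient box. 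Moreover, because the $\P_k$ are pairwise disjoint and the inclusions are non-empty, the node sets $\CC(F_{\P_1}),\dots,\CC(F_{\P_M})$ are pairwise disjoint, and consequently so are the edge sets $\Ed(F_{\P_1}),\dots,\Ed(F_{\P_M})$ (each $\Ed(F_{\P_k})$ consisting only of edges both of whose endpoints lie in $\CC(F_{\P_k})$).

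Next I would take a minimizer $\{u_I\}_{I\in\CC(F_\P)}$ of $\mE(F_\P,\cdot,\{\xi\cdot x_I\})$ — the infimum is attained since $F_\P$ has finitely many connected components (each has volume bounded below by a deterministic constant through the interior ball condition in (G1)) and $\mE$ is a non-negative quadratic form, coercive thanks to the term $\sum_I |I|\,|u_I|^2$ with $|I|>0$; alternatively one works with $\eta$-minimizers and lets $\eta\to 0$ — and for each $k$ restrict it to $\{u_I^{(k)}\}_{I\in\CC(F_{\P_k})}$. The core inequality is then
\begin{equation*}
\sum_{k=1}^M \mE\big(F_{\P_k},\{u_I^{(k)}\},\{\xi\cdot x_I\}\big) \;\le\; \mE\big(F_\P,\{u_I\},\{\xi\cdot x_I\}\big),
\end{equation*}
which I would justify term by term, exactly as \eqref{prop_E_1} is justified: every summand on the left — either an edge term $\mu_e|\xi\cdot x_I-\xi\cdot x_J+u_I-u_J|^2$ with $e\in\Ed(F_{\P_k})$, or a vertex term $|I|\,|u_I|^2$ with $I\in\CC(F_{\P_k})$ — occurs verbatim among the summands on the right by the inclusions of the previous paragraph; no such summand is counted twice because $\bigsqcup_k\Ed(F_{\P_k})$ and $\bigsqcup_k\CC(F_{\P_k})$ are disjoint unions inside $\Ed(F_\P)$ and $\CC(F_\P)$; and all remaining summands on the right are non-negative. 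Combining this with $\mathcal{H}(\P_k)\le\mE(F_{\P_k},\{u_I^{(k)}\},\{\xi\cdot x_I\})$ for each $k$ and with $\mE(F_\P,\{u_I\},\{\xi\cdot x_I\})=\mathcal{H}(\P)$ gives $\sum_{k=1}^M\mathcal{H}(\P_k)\le\mathcal{H}(\P)$, which is the asserted superadditivity.

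I do not expect a real obstacle: the argument is just the observation that restricting a competitor to a sub-box only discards non-negative terms. The two points that deserve a line of care are the identification $\CC(F_\P)=\{I\in\CC(F):I\subset\P\}$ together with the resulting sub-multigraph picture (so that passing to a sub-box genuinely corresponds to keeping a subset of nodes and edges, with unchanged weights and data), and the harmless remark that an inclusion $I\subset\P=\bigcup_k\P_k$ need not lie in any single $\P_k$ — such inclusions contribute only extra non-negative terms on the right-hand side and nothing on the left. The attainment of the infimum defining $\mathcal H$ is routine and can in any case be bypassed via $\eta$-minimizers.
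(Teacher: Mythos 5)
Your proof is correct and follows essentially the same route as the paper: both restrict the minimizer (or near-minimizer) of $\mathcal{H}(\P)$ to each sub-multigraph $Gr(F_{\P_k})$ and observe that the discarded edge and vertex terms of $\mE$ are non-negative, exactly in the spirit of \eqref{prop_E_1}. The only cosmetic difference is that you treat a general decomposition $\P=\cup_{k=1}^M\P_k$ directly, whereas the paper reduces to $M=2$ and makes the leftover terms explicit via the boundary components $\CC_\Sigma$; the content is the same.
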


\begin{proof}
It is enough to prove the results for a simple decomposition  $\P=\P_1 \cup \P_2$, with a boundary $\Sigma$ between $\P_1$ and $\P_2$. This leads to the following decomposition of the nodes of the graph

$$ \CC(F_\P) = \CC(F_{\P_1}) \cup \CC(F_{\P_2}) \cup \CC_\Sigma$$
where $\CC_\Sigma$ is the set of  all  connected of components of $F_\P$ that intersect the boundary $\Sigma$ without being included in $\P_1$ or $\P_2$. The following figure explains the decomposition. In white is the graph $Gr(F_{\P_1})$ and in black is the graph $Gr(F_{\P_2})$. In dotted lines is what remains from the graph of   $\CC(F_\P)$.  
\definecolor{gray1}{gray}{0.85}
\definecolor{gray2}{gray}{0.67}
\definecolor{gray3}{gray}{0.55}
\definecolor{gray4}{gray}{0.90}
\begin{figure}[H]
    \centering
    \psset{unit=0.8cm}
\begin{pspicture}(-4,-3)(4,3)
\psframe[fillstyle=solid,linewidth=1.5pt,linecolor=black, fillcolor=gray4] (-4,-3)(4,3)
\pscircle[fillstyle=solid,fillcolor=gray, linecolor=black, linewidth=1.2pt](1,0){1}
\pscircle[fillstyle=solid,fillcolor=gray, linecolor=black, linewidth=1.2pt](2,1.8){1}
\pscircle[fillstyle=solid,fillcolor=gray, linecolor=black, linewidth=1.2pt](-2,-1.8){1}
\pscircle[fillstyle=solid,fillcolor=gray, linecolor=black, linewidth=1.2pt](-2.9,1){0.9}
\pscircle[fillstyle=solid,fillcolor=gray, linecolor=black, linewidth=1.2pt](2.5,-1){0.7}
\pscircle[fillstyle=solid,fillcolor=gray, linecolor=black, linewidth=1.2pt](-0.3,1.6){0.9}
\pscircle[fillstyle=solid,fillcolor=gray, linecolor=black, linewidth=1.2pt](-1.3,0){0.9}
\pscircle[fillstyle=solid,fillcolor=gray, linecolor=black, linewidth=1.2pt](1,-2){0.9}
\pscircle[fillstyle=solid,fillcolor=gray, linecolor=black, linewidth=1.2pt](3.2,0.2){0.65}

\pscurve[showpoints=false,linewidth=3pt,linecolor=black,linestyle=dotted] (1,0) (2,1.8) (2,1.8)
\pscurve[showpoints=false,linewidth=3pt,linecolor=black,linestyle=dotted] (1,0) (1,-2) (1,-2)
\pscurve[showpoints=false,linewidth=4pt,linecolor=black] (2.5,-1) (3.2,0.2) (3.2,0.2)
\pscurve[showpoints=false,linewidth=4pt,linecolor=white] (-2.9,1) (-1.3,0) (-1.3,0)
\pscurve[showpoints=false,linewidth=4pt,linecolor=white] (-1.3,0) (-2,-1.8) (-2,-1.8)
\pscurve[showpoints=false,linewidth=4pt,linecolor=white] (-1.3,0) (-0.3,1.6) (-0.3,1.6)
\pscurve[showpoints=false,linewidth=3pt,linecolor=black,linestyle=dotted] (1,0) (-0.3,1.6)  (-0.3,1.6) 
\pscurve[showpoints=false,linewidth=3pt,linecolor=black,linestyle=dotted] (1,0) (2.5,-1) (2.5,-1)

\psline[linewidth=1pt,linestyle=dashed] (1,-3) (2.5,3)
\rput(1.4,-3.3){\large$\Sigma$ } 
\rput(-3.2,-2.5){\large$\P_1$ } 
\rput(3,-2.5){\large$\P_2$ } 
\psline[linewidth=1pt] (4,2.2) (4.5,2.2)
\rput(4.8,2.2){\large$\P$} 
\end{pspicture}
    \caption{Separation of the domain $\P= \P_1 \cup \P_2$ with the boundary $\Sigma$.}
    \label{fig:separation of the domain}
\end{figure}
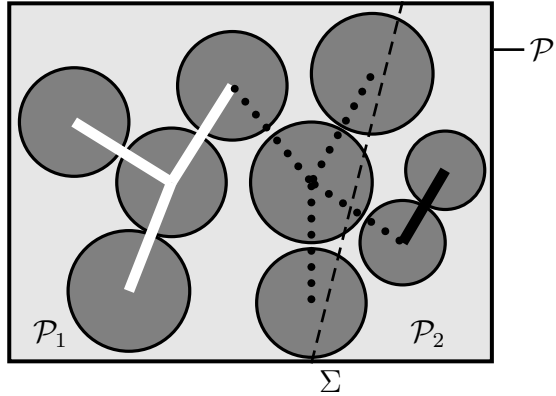
\noindent
Let $(u_I)_{I \in \CC(F_\P)}$ the solution that minimizes $\mathcal{H}(\P)$, that we split, up to some permutations, into a vector of the form $(u^1,u^2,u^\Sigma) \in \R^{|\CC(F_{\P_1})|} \times \R^{|\CC(F_{\P_2})|}  \times \R^{|\CC_\Sigma|}$. We compute
\begin{align*}
 \mathcal{H}(\P) -  \mathcal{H}(\P_1) - \mathcal{H}(\P_2) \:  \ge \: &  \:  \mathcal{H}(\P) - \mE( F_{\P_1}, u^1, \{\xi \cdot x_I \}) -  \mE( F_{\P_2}, u^2, \{\xi \cdot x_I \}) \\
    \ge  \: &\sum_{\substack{I \in \CC(F_{\P_1}), \\J \in \CC_\Sigma}}  \mu_{I,J} | \xi \cdot (x_I-x_J) + u_I^1 - u_J^\Sigma|^2\\
    + \: &\sum_{\substack{I \in \CC(F_{\P_2}), \\J \in \CC_\Sigma}} \mu_{I,J} |\xi \cdot (x_I-x_J) + u_I^2 - u_J^\Sigma|^2 + \sum_{I \in \CC_\Sigma}| \, I \, | \,  |u_I^\Sigma|^2 \: \geq \: 0
\end{align*}
which ends the argument.
\end{proof}
\noindent
We can then use the superadditive ergodic theorem ({\it cf.} \cite{akcoglu1981ergodic,modica1986nonlinear}), which yields a sufficient condition for \eqref{H1} to hold :
\begin{proposition}
\label{prop_super_additivity_H1}
Assume that 
$$ \sup_{N}  \mathbb{E} \frac{1}{|Q_N|} \mathcal{H}(Q_N)  < + \infty,$$
 then the admissible set $F$ verifies assumption \eqref{H1}. 
\end{proposition}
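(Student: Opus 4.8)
The plan is to recognize $\mathcal H$ as a nonnegative, stationary (covariant), superadditive set function and then to quote the superadditive ergodic theorem.

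First I would check \emph{covariance}: for every bounded Borel set $\P$, every $y\in\R^3$ and every $\omega$,
$$\mathcal H(\P+y)(\omega)=\mathcal H(\P)(\tau_y\omega).$$
Indeed $F(\tau_y\omega)=F(\omega)-y$ gives $F_{\P}(\tau_y\omega)=F_{\P+y}(\omega)-y$ (the connected components of $F(\tau_y\omega)$ lying in $\P$ are exactly the translates by $-y$ of those of $F(\omega)$ lying in $\P+y$), and the functional $\{u_I\}\mapsto\mE(\,\cdot\,,\{u_I\},\{\xi\cdot x_I\})$ is invariant under a common translation of all components: $\mu_e=\ln|e|$ and $|I|$ are translation invariant, and $\xi\cdot x_I$ enters only through the differences $\xi\cdot x_I-\xi\cdot x_J=\xi\cdot(x_I-x_J)$. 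Measurability of $(y,\omega)\mapsto\mathcal H(\P+y)(\omega)$ is routine. Moreover $\mathcal H\ge 0$, and $\mathcal H$ is nondecreasing in $\P$: if $\P\subset\P'$ then $\CC(F_\P)\subset\CC(F_{\P'})$, and restricting an almost optimal family for $F_{\P'}$ to $\CC(F_\P)$ only deletes nonnegative terms of the energy, so $\mathcal H(\P)\le\mathcal H(\P')$ by \eqref{prop_E_1}.

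Superadditivity of $\mathcal H$ over pairwise disjoint decompositions is precisely the preceding lemma. Together with covariance and the invariance of $P$, tiling $Q_{2N}$ by the $8$ translates of $Q_N$ gives $\E\,\mathcal H(Q_{2N})\ge 8\,\E\,\mathcal H(Q_N)$, so $N\mapsto|Q_N|^{-1}\E\,\mathcal H(Q_N)$ is nondecreasing along dyadic scales and, by hypothesis, $\Lambda:=\sup_N|Q_N|^{-1}\E\,\mathcal H(Q_N)<+\infty$. I would then apply the superadditive ergodic theorem (\cite{akcoglu1981ergodic,modica1986nonlinear}): a nonnegative stationary superadditive process with $\sup_N|Q_N|^{-1}\E\,\mathcal H(Q_N)<+\infty$ over an ergodic system satisfies $|Q_N|^{-1}\mathcal H(Q_N)\to\Lambda$ almost surely (say along integer $N$). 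For arbitrary real $N\to+\infty$, monotonicity gives $\mathcal H(Q_N)\le\mathcal H(Q_{\lceil N\rceil})$ while $|Q_N|\ge 8^{-1}|Q_{\lceil N\rceil}|$, whence $\limsup_{N\to\infty}|Q_N|^{-1}\mathcal H(Q_N)\le 8\Lambda<+\infty$ almost surely. Since $|Q_N|^{-1}\mathcal H(Q_N)=\inf_{\{u_I\}}|Q_N|^{-1}\mE(F_N,\{u_I\},\{\xi\cdot x_I\})$, this is exactly \eqref{H1} (for the fixed $\xi$; for all $\xi\in\R^3$ one runs the argument over a countable dense set of $\xi$, or over $e_1,e_2,e_3$ using that $\P\mapsto\mathcal H(\P)$ is a quadratic form in $\xi$, and intersects the full-measure events).

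The step I expect to be the main obstacle is verifying that $\mathcal H$ really fits the hypotheses of the chosen version of the superadditive ergodic theorem — covariance and measurability as above, but especially that cubes tile one another up to Lebesgue-null sets so that the lemma's superadditivity survives the tiling (components meeting a dividing hyperplane land in the $\CC_\Sigma$ term, which only helps), together with the passage from the lattice-indexed statement to the continuum limit in $N$. The rest is bookkeeping.
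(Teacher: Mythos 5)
Your proposal is correct and follows essentially the same route as the paper, which deduces Proposition \ref{prop_super_additivity_H1} directly from the superadditivity lemma together with the superadditive ergodic theorem of \cite{akcoglu1981ergodic,modica1986nonlinear}. The additional checks you spell out (covariance via $F(\tau_y\omega)=F(\omega)-y$, monotonicity from \eqref{prop_E_1}, the tiling of $Q_{2N}$, the passage to real $N$, and the reduction to finitely many $\xi$ by quadratic homogeneity) are exactly the routine verifications the paper leaves implicit.
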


%\begin{proof}
%The idea is to applied the Lemma \ref{lemma_theoreme_ergodique} to the quantity $\mathcal{H}(\P,\omega)$. As the super-additivity has already been proved, we only need to verifies that $\mathcal{H}(\P,\omega)$ is a super-additive quantity, which comes straightforwardly from the stationarity of the random set $F(\omega)$. Therefore, Lemma \ref{lemma_theoreme_ergodique} implies that the following limit exists and is independant of $\omega$ 
%
%$$\underset{N \rightarrow + \infty}{\lim} \frac{\mathcal{H}(Q_N,\omega)}{|Q_N|} < + \infty$$
%which is exactly $\eqref{H1}$.
%\end{proof}
%
%A METTRE EN ANNEXE
%\begin{lemma}
%\label{lemma_theoreme_ergodique}
%Let $G :  \mathcal{M}(\R^3) \rightarrow L^1(\Omega,\mathcal{U},\mu)$ a continuous super-additive process that verifies
%
%\begin{enumerate}
%    \item $G( \P, \tau_x \omega)= G( \P + x,\omega)$ for almost all $x \in \R^3$ and $\omega \in \Omega$ (\textbf{stationarity}) 
%    \item if $\{\P_1,...,\P_M\}$ are pairwise disjoint sets and $\P = \cup_{k=1}^M \P_k$,
%
%$$G(\P,\omega) \geq \sum_{k=1}^M G(\P_k,\omega), \text{ for almost all $w$ (\textbf{super-additive})}$$
%\item   $$\mathbb{E} \left[\sup_{\P \in \mathcal{M}(\R^3)}  \frac{1}{|\P|} G(\P,\cdot)  \right] < + \infty$$
%\end{enumerate}
%
%Then the following limit exists and is independant of $\omega$ :
%
%$$\underset{N \rightarrow + \infty}{\lim} \frac{G(Q_N,\omega)}{|Q_N|} < \infty \text{ for almost all $\omega$}.$$
%\end{lemma}

\subsection{Discussion around \eqref{H2}}

\subsubsection*{Logarithmic moment bound}
\begin{lemma} \label{lemma_logH2}
Let $F$ a random closed set satisfying (G1)-(G2). Let  $s \in (2,\infty)$ and $k= \big(\frac{s}{2}\big)'$. If 
\begin{equation} \label{log_bound_mue_bis}
  \limsup_{N \rightarrow +\infty} \frac{1}{|Q_N|} \sum_{e \in \mathrm{Ed}(F_N)}   \mu_e^k < +\infty 
 \end{equation}
then $F$ verifies \eqref{H2} with exponent $s$.
\end{lemma}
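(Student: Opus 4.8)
The plan is to apply the reformulation \eqref{reformulation_H2} of \eqref{H2} with $F'=F$, making the crude choice $u_I=0$ for every $I\in\CC(F_N)$: no minimization over the $u_I$'s is needed, which reflects the strength of a logarithmic moment bound. Throughout, $I,J$ range over $\CC(F_N)$ and $e$ over $\Ed(F_N)$, and we recall $\mu_e\ge 0$. With the above choice, Definition \ref{defi:En} and the elementary bound $|a-b|^2\le 2|a|^2+2|b|^2$, together with the symmetry of the double sum under the exchange $I\leftrightarrow J$, give
\begin{equation*}
\mE\Big(F_N,\{0\},\{b_{IJe}\}\Big)=\sum_{I,J}\ \sum_{\substack{e:\, I\overset{e}{\leftrightarrow} J}}\mu_e\,|b_{IJe}-b_{JIe}|^2\ \le\ 4\sum_{I,J}\ \sum_{\substack{e:\, I\overset{e}{\leftrightarrow} J}}\mu_e\,|b_{IJe}|^2 .
\end{equation*}

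Next I would apply Hölder's inequality to the last sum with the conjugate exponents $\tfrac{s}{2}$ and $k=\big(\tfrac{s}{2}\big)'=\tfrac{s}{s-2}$, which is licit since $s>2$, obtaining
\begin{equation*}
\sum_{I,J}\ \sum_{\substack{e:\, I\overset{e}{\leftrightarrow} J}}\mu_e\,|b_{IJe}|^2\ \le\ \Big(\sum_{I,J}\ \sum_{\substack{e:\, I\overset{e}{\leftrightarrow} J}}\mu_e^{\,k}\Big)^{1/k}\Big(\sum_{I,J}\ \sum_{\substack{e:\, I\overset{e}{\leftrightarrow} J}}|b_{IJe}|^s\Big)^{2/s}.
\end{equation*}
Each unordered edge is counted twice in these double sums, so $\sum_{I,J}\sum_{e:\,I\overset{e}{\leftrightarrow}J}\mu_e^{\,k}=2\sum_{e\in\Ed(F_N)}\mu_e^{\,k}$. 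The decisive point is that the exponents are matched: $\tfrac1k=1-\tfrac2s$, whence
\begin{equation*}
\Big(2\sum_{e\in\Ed(F_N)}\mu_e^{\,k}\Big)^{1/k}=|Q_N|^{\,1-\frac2s}\Big(\frac{2}{|Q_N|}\sum_{e\in\Ed(F_N)}\mu_e^{\,k}\Big)^{1/k}.
\end{equation*}

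Finally, by (G1)-(G2) the set $F_N$ has finitely many connected components and finitely many edges, so each $\tfrac{1}{|Q_N|}\sum_{e\in\Ed(F_N)}\mu_e^{\,k}$ is finite; combined with hypothesis \eqref{log_bound_mue_bis} this yields, almost surely, a finite quantity $M_0=M_0(\omega):=\sup_{N\ge1}\tfrac{1}{|Q_N|}\sum_{e\in\Ed(F_N)}\mu_e^{\,k}$. Chaining the three displays then gives, with $M:=4\,(2M_0)^{1/k}$,
\begin{align*}
\mE\Big(F_N,\{0\},\{b_{IJe}\}\Big)&\le 4\,(2M_0)^{1/k}\,|Q_N|^{\,1-\frac2s}\Big(\sum_{I,J}\ \sum_{\substack{e:\, I\overset{e}{\leftrightarrow} J}}|b_{IJe}|^s\Big)^{2/s}\\
&= M\,|Q_N|\Big(\frac{1}{|Q_N|}\sum_{I,J}\ \sum_{\substack{e:\, I\overset{e}{\leftrightarrow} J}}|b_{IJe}|^s\Big)^{2/s},
\end{align*}
which is exactly \eqref{reformulation_H2}, i.e. \eqref{H2} with exponent $s$. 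There is no genuine obstacle here: the only care required is bookkeeping — the factors of $2$ coming from ordered versus unordered edges, and verifying that $k=(s/2)'$ is precisely the exponent producing the factor $|Q_N|^{1-2/s}$ — the substance being simply that the zero choice $u_I\equiv0$ already realizes \eqref{H2} as soon as $\sum_{e\in\Ed(F_N)}\mu_e^{\,k}$ grows at most linearly in $|Q_N|$.
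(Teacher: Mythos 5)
Your proof is correct and follows essentially the same route as the paper's: take $u_I\equiv 0$ so that only the edge term of $\mE$ survives, apply H\"older with the conjugate exponents $\tfrac{s}{2}$ and $k=(\tfrac{s}{2})'$, and absorb the logarithmic sum via the hypothesis \eqref{log_bound_mue_bis}, which is exactly the argument in the paper (your bookkeeping of the factors of $2$ and the passage from the limsup to an a.s.\ finite uniform constant $M(\omega)$ is the same implicit step the paper makes in invoking the reformulation \eqref{reformulation_H2}).
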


\begin{proof}
Let $N \in \N^*$, and $\{b_{IJe}\}$ a family indexed by $I,J$ in $\CC(F_N)$ and $e \in \Ed(F_N)$ such as $I \overset{e}{\leftrightarrow } J$.  We want to control the quantity  $\mE\Big(F_N, \{u_I\}, \{b_{IJe}\}\Big)$ by $$|Q_N|\Big( \frac{1}{|Q_N|} \sum_{I,J \in \CC(F_N)} \sum_{\substack{e \in \Ed(F_N), \\ I \overset{e}{\leftrightarrow} J}} |b_{IJe}|^s \Big)^{2/s}$$ for a suitable choice of $\{u_I\}_{I \in \CC(F_N)}$. With our logarithmic bound, we may  take  $u_I=0$ for all $I$. We find  
$$\mE\Big(F_N, \{u_I\}, \{b_{IJe}\}\Big) = \sum_{I,J \in \CC(F_N)} \sum_{\substack{e \in \Ed(F_N), \\ I \overset{e}{\leftrightarrow} J}} |b_{IJe}-b_{JIe}|^2 \mu_{e}.$$
Using Hölder inequality with $k=\frac{s}{s-2}$ and $k' = \frac{s}{2}$, we have
\begin{align*}
    & \sum_{I,J \in \CC(F_N)} \sum_{\substack{e \in \Ed(F_N), \\ I \overset{e}{\leftrightarrow} J}} |b_{IJe}-b_{JIe}|^2 \mu_{e} \\
    & \leq  \Big( \sum_{I,J \in \CC(F_N)} \sum_{\substack{e \in \Ed(F_N), \\ I \overset{e}{\leftrightarrow} J}} \mu_e^k \Big)^{\frac{1}{k}}   \Big(\sum_{I,J \in \CC(F_N)} \sum_{\substack{e \in \Ed(F_N), \\ I \overset{e}{\leftrightarrow} J}} |b_{IJe}-b_{JIe}|^s\Big)^{\frac{2}{s}}\\
       & \le C |Q_N| \Big( \frac{1}{|Q_N|}\sum_{e \in \Ed(F_N)}  \mu_e^{k} \Big)^{\frac{s-2}{s}}  \Big( \frac{1}{|Q_N|} \sum_{I,J \in \CC(F_N)} \sum_{\substack{e \in \Ed(F_N), \\ I \overset{e}{\leftrightarrow} J}} |b_{IJe}|^s\Big)^{\frac{2}{s}} 
\end{align*}
which together with the bound  \eqref{log_bound_mue_bis}  gives the expected result.
\end{proof}

\subsubsection*{\eqref{H2}  implies \eqref{H1}}
\begin{lemma} \label{lem_H2_implies_H1}
Let $F$ an admissible set of inclusions, $F'$ an admissible short of $F$, $s > 3$. If
$$ \E \diam(I_{0,F'})^s  < +\infty $$
and  if \eqref{H2} is satisfied by $F'$, then \eqref{H1} is satisfied by $F$.
\end{lemma}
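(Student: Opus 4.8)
The plan is to reduce everything to proving that the short $F'$ itself satisfies \eqref{H1}, and then invoke Lemma \ref{lemma_H1H2}. Note first that $\E\,\diam(I_{0,F'})^s<+\infty$ with $s>3$ forces $\E\,\diam(I_{0,F'})^3<+\infty$ (since $X^3\le 1+X^s$ for $X\ge 0$), so Remark \ref{rem_H1_H1short} grants the first two hypotheses of Lemma \ref{lemma_H1H2}; consequently, once $F'$ is known to satisfy \eqref{H1}, that lemma transfers the property to $F$, which is the claim. So the whole content is the implication: \eqref{H2} for $F'$ together with an inclusion moment bound gives \eqref{H1} for $F'$.

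To prove this I would fix $\xi\in\R^3$ and exploit the structure of $\mE$: in \eqref{defi:En} the family $\{b_{IJe}\}$ enters only through the antisymmetric increments $b_{IJe}-b_{JIe}$. Hence, setting $b'_{I'J'e}:=\frac12\,\xi\cdot(x_{I'}-x_{J'})$, which has exactly the same increments as $\{\xi\cdot x_{I'}\}$ (namely $b'_{I'J'e}-b'_{J'I'e}=\xi\cdot(x_{I'}-x_{J'})=\xi\cdot x_{I'}-\xi\cdot x_{J'}$), one gets $\mE(F'_N,\{u_{I'}\},\{\xi\cdot x_{I'}\})=\mE(F'_N,\{u_{I'}\},\{b'_{I'J'e}\})$ for every family $\{u_{I'}\}$. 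The point of this substitution is that $b'$ is \emph{small}: whenever $I'\overset{e}{\leftrightarrow}J'$ the edge satisfies $|e|\le\delta'$, so the centers of mass obey $|x_{I'}-x_{J'}|\le\diam(I')+\delta'+\diam(J')$, and therefore $|b'_{I'J'e}|\le\frac12|\xi|(\diam(I')+\diam(J')+\delta')$. Using that (G1)--(G2) for $F'$ make the inclusions of volume bounded below and of bounded degree in $Gr(F'_N)$, so that $\#\CC(F'_N)$, $\#\Ed(F'_N)=O(|Q_N|)$ and each inclusion is an endpoint of at most finitely many edges, this yields $\sum_{I',J',e}|b'_{I'J'e}|^s\le C|\xi|^s\big(\sum_{I'\in\CC(F'_N)}\diam(I')^s+|Q_N|\big)$; and since each $I'\subset Q_N$ has volume $\ge v_0>0$, the ergodic theorem applied to the stationary field $y\mapsto\diam(I_{y,F'})^s\in L^1(\Omega)$ (Remark \ref{rem_I0_C0}) gives $\sum_{I'\in\CC(F'_N)}\diam(I')^s\le v_0^{-1}\int_{Q_N}\diam(I_{y,F'})^s\,dy\le C|Q_N|$ almost surely for $N$ large. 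Thus, almost surely, $\frac1{|Q_N|}\sum_{I',J',e}|b'_{I'J'e}|^s\le C|\xi|^s$ for $N$ large.

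Then I would apply \eqref{H2} for $F'$ in the reformulated form \eqref{reformulation_H2}: there is $M=M(\omega)$ and a family $\{u_{I'}\}$ with $\mE(F'_N,\{u_{I'}\},\{b'_{I'J'e}\})\le M|Q_N|\big(\frac1{|Q_N|}\sum|b'_{I'J'e}|^s\big)^{2/s}$. Combining with the identity $\mE(F'_N,\{u_{I'}\},\{b'_{I'J'e}\})=\mE(F'_N,\{u_{I'}\},\{\xi\cdot x_{I'}\})$ and the bound above, we obtain $\inf_{\{u_{I'}\}}\frac1{|Q_N|}\mE(F'_N,\{u_{I'}\},\{\xi\cdot x_{I'}\})\le C'M|\xi|^2$ almost surely for $N$ large; since this bound is uniform in $\xi$, a single full-measure set works for all $\xi\in\R^3$, so $F'$ satisfies \eqref{H1}. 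Lemma \ref{lemma_H1H2} then gives \eqref{H1} for $F$.

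The step I expect to be the crux is the substitution $\{\xi\cdot x_{I'}\}\rightsquigarrow\{b'_{I'J'e}\}$: one must resist trying to "realize" the data $\{\xi\cdot x_{I'}\}$ directly, whose $\ell^s$-norm over $Q_N$ is of order $N\,|Q_N|^{1/s}$ and would make the estimate coming from \eqref{H2} diverge; the right move is to pass to the minimal-norm family sharing the same edge-increments, which is automatically controlled by inclusion diameters alone because edges only connect $\delta'$-close inclusions. After that, the argument is just the multidimensional ergodic theorem, the elementary counting from (G1)--(G2), and an appeal to Lemma \ref{lemma_H1H2}.
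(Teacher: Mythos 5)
Your proposal is correct and follows essentially the same route as the paper: reduce via Lemma \ref{lemma_H1H2} and Remark \ref{rem_H1_H1short} to proving \eqref{H1} for $F'$, then exploit that only the increments $b_{IJe}-b_{JIe}$ enter $\mE$ to replace $\{\xi\cdot x_{I'}\}$ by a recentered edge family of size $O(|\xi|(\diam(I')+\diam(J')+\delta'))$, bound its $\ell^s$ norm by the ergodic theorem and the moment assumption, and conclude with \eqref{H2}. The only cosmetic difference is your choice $b'_{I'J'e}=\tfrac12\xi\cdot(x_{I'}-x_{J'})$ instead of the paper's $\xi\cdot\big(x_{I'}-\tfrac{x_{I',\alpha}+x_{J',\beta}}{2}\big)$, which carries the same increments and the same diameter bound.
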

\begin{proof}
By Lemma  \ref{lemma_H1H2} and Remark  \ref{rem_H1_H1short}, it is enough to show that  $F'$ satisfies \eqref{H1}.  
 This can be seen by setting '
$$ b_{IJe} = \xi \cdot \big(x_I - \frac{x_{I,\alpha}+ x_{J,\beta}}{2}\big), \quad \forall I,J \in \mathrm{CC}(F'_N), \quad e = [x_{I,\alpha}, x_{J,\beta}] \in \mathrm{Ed}(F'_N), \\ I \overset{e}{\leftrightarrow} J.$$
Clearly,  from assumptions (G1)-(G2), one has $|b_{IJe}| \le C|\xi| (\diam(I) + \delta) \le C' \diam(I)$, and
$\sharp \mathrm{Ed}(F'_N) \le C |Q_N|$. It follows that 
\begin{align*} 
\frac{1}{|Q_N|} \sum_{I,J \in \mathrm{CC}(F'_N)} \sum_{\substack{e \in \mathrm{Ed}(F'_N), \\ I \overset{e}{\leftrightarrow} J}} |b_{IJe}|^s  & \le \frac{C}{|Q_N|}  \, |\xi| \sum_{I \in \mathrm{CC}(F_N)} \diam(I)^{s}  \\
& \le \frac{C'}{|Q_N|}  \, |\xi| \sum_{I \in \mathrm{CC}(F_N)} |\, I \, | \, \diam(I)^{s}  \\
& \le \frac{C'}{|Q_N|} \, |\xi|  \int_{Q_N} \diam(I_{x,F})^s dx 
\end{align*}
Thanks to the ergodic theorem, we end up with 
$$ \limsup_N \, \frac{1}{|Q_N|} \sum_{I,J \in \mathrm{CC}(F_N)} \sum_{\substack{e \in \mathrm{Ed}(F_N), \\ I \overset{e}{\leftrightarrow} J}} |b_{IJe}|^s \le C' |\xi| \, \E \, I_{0,F}^s. $$
\end{proof}

\subsubsection*{Cycle-free graphs}
The previous lemma comes from a trivial choice of the family $\{u_I\}$. We will show that if the multigraph of inclusions is cycle-free, there is a better choice,   that enables to relax the logarithmic moment bound, and prove Corollary  \ref{corollary_cyclefree_new}.   

\begin{proof}[Proof of Corollary \ref{corollary_cyclefree_new}]
Let $b_{IJe}$ a family indexed by the triplet $I,J \in \CC(F_N)$, $e \in \Ed(F_N)$, with $I \overset{e}{\leftrightarrow} J$. As $Gr(F)$ is cycle-free, there is a single edge $e$ linking the nodes $I$ and $J$, so that we can note $b_{IJ}$ instead of  $b_{IJe}$ for brevity. 
Given an arbitrary reference inclusion $I_{\mC_N}$  in each cluster $\mC_N$ of $F_N$, we then define a family $u_I$, $I \in \CC(\mC_N)$, as follows.  For all such inclusion, there is a unique integer $k_I \in \N$ and a unique branch $I_0 = I_{\mC_N}, \dots,  I_{k_I} = I$ connecting $I_{\mC_N}$ to $I$ (with $k_I=0$ in the case $I = I_{\mC_N}$). We define 
$$u_I := \sum_{j=0}^{k_I-1} b_{I,j}' ,  \ b_{I,j}' :=b_{I_j I_{j+1}} - b_{I_{j+1} I_j}$$
Note that in particular, $u_{I_{\mC_N}} = 0$. Doing this for all cluster $\mC_N$, we get a family $(u_I)_{I \in \CC(F_N)}$.  We then compute 
\begin{align*}
    \frac{1}{|Q_N|} \mE(F_N,\{u_I\},\{ b_{IJ} \} ) & = \frac{1}{|Q_N|} \sum_{I \in \CC(F_N)} | I | \, |u_I|^2\\
    & = \frac{1}{|Q_N|} \sum_{\mC_N} \sum_{I \in \CC(\mC_N)}   | I | \, \left|\sum_{j=0}^{k_I-1}  b_{I,j}' \right|^2\\
    & \le  \frac{1}{|Q_N|}  \sum_{\mC_N} \sum_{I \in \CC(\mC_N)}   | I | \,  k_I \sum_{j=0}^{k_I-1} |b_{I,j}'|^2
\end{align*}
Using that $k_I \leq \sharp \mC_N$, and the bound 
$$  \sum_{j=0}^{k_I-1} |b_{I,j}'|^2 \le \sum_{[I,J] \in \Ed(\mC_N)} |b_{IJ} - b_{JI}|^2$$
we find  
%and that a term of the form $b_{IJ} - b_{JI}$, $[I,J] \in \Ed(\mC)$ appears at most $\sharp \mC$ times in the sum:
%$$\sum_{I \in \CC(\mC) \cap  \CC(F_N)}    | I | \,  k \sum_{j=0}^{k_I-1} |b_{I,j}'|^2$$ 
%we find  
\begin{align*}
    \frac{1}{|Q_N|} \mE(F_N,\{u_I\},\{ b_{IJ} \} ) & \le  \frac{1}{|Q_N|}  \sum_{\mC_N} (\sharp \mC_N) \sum_{[I,J] \in \Ed(\mC)} |b_{IJ} - b_{JI}|^2 
    \sum_{I \in \CC(\mC_N)}   | I | \\
    & \le  \frac{1}{|Q_N|}  \sum_{\mC_N} (\sharp \mC_N) |\mC_N| \sum_{[I,J] \in \Ed(\mC)} |b_{IJ} - b_{JI}|^2 
 \end{align*}
We get, for any $p > 2$, denoting $\tilde{s}:=\frac{p}{p-2}$ the conjugate exponent of $\frac{p}{2}$ : 
\begin{align*}
   &  \frac{1}{|Q_N|} \mE(F_N,\{u_I\},\{ b_{IJe} \} )  \\
     & \le \left( \frac{1}{|Q_N|} \sum_{\mC_N} \sum_{[I,J] \in \Ed(\mC_N)}|b_{IJ} - b_{JI}|^{2 \tilde{s}} \right)^{\frac{1}{\tilde{s}}} \left( \frac{1}{|Q_N|} \sum_{\mC_N} \sum_{[I,J] \in \Ed(\mC_N)} (\sharp \mC_N)^{p/2} \, |\mC_N|^{p/2} \right)^{\frac{2}{p}}
\end{align*}
Thanks to the cycle-free hypothesis, we get that $\sharp \Ed(\mC_N)+1= \sharp \mC_N$, so much that 
\begin{align*}
     \frac{1}{|Q_N|} \mE(F_N,\{u_I\},\{ b_{IJe} \} )  & \le C \left( \frac{1}{|Q_N|}  \sum_{[I,J] \in\Ed(F_N)}  |b_{IJ}|^{2 \tilde{s}} \right)^{\frac{2}{\tilde{s}}} \left( \frac{1}{|Q_N|} \sum_{\mC_N} (\sharp \mC_N)^{p/2+1} \, |\mC_N|^{p/2}  \right)^{\frac{2}{p}}.
\end{align*}
Finally, we notice that $|\mC_N| \le  C \, \sharp \mC_N$, so that  
\begin{align*}
\limsup_{N \rightarrow +\infty} \frac{1}{|Q_N|} \sum_{\mC_N} (\sharp \mC_N)^{p/2+1} \, |\mC_N|^{p/2} \: \le \:  & C \limsup_{N \rightarrow +\infty} \frac{1}{|Q_N|} \sum_{\mC_N}  (\sharp \mC_N)^{p} \, |\mC_N|  \\
\: \le \:  & C  \limsup_{N \rightarrow +\infty}  \frac{1}{|Q_N|} \int_{Q_N} (\sharp C_{x,F_N})^p dx \\
\: \le \: & C  \limsup_{N \rightarrow +\infty}  \frac{1}{|Q_N|} \int_{Q_N} (\sharp C_{x,F})^p dx = C \,  \E \,  \sharp C_{0,F}^p 
\end{align*}
This concludes the proof.
\end{proof}

\section*{Acknowledgements}
The authors acknowledge the support of the SingFlows project, grant ANR-18-CE40-0027 of the French National Research Agency (ANR). David G\'erard-Varet acknowledges  the support of the Institut Universitaire de France. Alexandre Girodroux-Lavigne acknowledges the support of the DIM Math Innov de la Région Ile-de-France.
%\begin{center}
%    \includegraphics[height=15.0mm]{logo_dim_math.png}
%\end{center}

\appendix
\section{Keller function}

The object of this appendix is the following 

\begin{lemma}
\label{lemma::bridge function}
Let $P_1$ and $P_2$ be two paraboloids be defined in cylindrical coordinates as 
\begin{align*}
    & P_1 := \{ z \leq -a r^2  \}, \quad  P_2 := \{ a r^2 + \nu \leq z  \}, \quad  \nu > 0 \text{ small}. 
\end{align*}
Furthermore we note $F_{12}(d)$ the gap of width $d>0$ between the two paraboloids defined by
$$F_{12}(d):= \{ r^2 \leq  d^2, \quad  -a r^2  \leq z \leq \nu + a r^2    \}.$$
(see the picture below). 
Then there exists $w_{\nu} \in  H^{1}(F_{12}(d) \cup P_1 \cup P_2)$ such that 
\begin{align*}
& 0 \leq w_{\nu} \leq 1, \quad  w_{\nu}|_{P_1}=1, \quad w_{\nu}|_{P_2}=0, \\
& \int_{F_{12}(d)} |\na w_{\nu}|^2 \dx \leq C \ln{\frac{1}{\nu}}, \quad  \int_{F_{12}(d)} |\na w_{\nu}|^2 |x|^{2\gamma} \dx \le C_\gamma, \quad \forall \gamma > 0, 
\end{align*}
where $C, C_\gamma$ are constants independent of $\nu$.
\end{lemma}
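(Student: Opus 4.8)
The plan is to construct $w_\nu$ explicitly as a function of $z$ and $r$ in the gap region, interpolating from $1$ on the lower paraboloid to $0$ on the upper one, using the classical Keller-type ansatz that the transition happens ``vertically'' through the thin neck. Concretely, in the gap $F_{12}(d)$ the local thickness of the channel at radius $r$ is $h(r) := \nu + 2ar^2$, and I would set
\[
w_\nu(r,z) := \frac{ (ar^2 + \nu) - z }{h(r)} = \frac{(ar^2+\nu)-z}{\nu + 2ar^2}
\]
so that $w_\nu = 1$ on $\{z = -ar^2\}$ (the boundary of $P_1$) and $w_\nu = 0$ on $\{z = \nu + ar^2\}$ (the boundary of $P_2$); then extend by $w_\nu \equiv 1$ on $P_1$ and $w_\nu \equiv 0$ on $P_2$. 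One checks immediately $0 \le w_\nu \le 1$, and the matching at the two interfaces is continuous, so $w_\nu \in H^1$ of the union as soon as the gradient is square-integrable in $F_{12}(d)$, which is the content of the two estimates.

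The main computation is the gradient bound. First I would compute $\partial_z w_\nu = -1/h(r)$, so $|\partial_z w_\nu|^2 = h(r)^{-2}$, and then
\[
\int_{F_{12}(d)} |\partial_z w_\nu|^2\,dx = \int_{r \le d} \Big( \int_{-ar^2}^{\nu + ar^2} \frac{dz}{h(r)^2} \Big)\, 2\pi r\, dr = 2\pi \int_0^d \frac{r}{h(r)}\, dr = 2\pi \int_0^d \frac{r\, dr}{\nu + 2ar^2}.
\]
The substitution $t = \nu + 2ar^2$ turns this into $\frac{\pi}{2a}\int_\nu^{\nu + 2ad^2} \frac{dt}{t} = \frac{\pi}{2a}\ln\!\big(1 + \tfrac{2ad^2}{\nu}\big) \le C \ln\tfrac{1}{\nu}$ for $\nu$ small, which is the first bound. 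For the radial derivative, $\partial_r w_\nu$ is a rational function of $r$ whose numerator, after differentiating the quotient, is $O(r)$ near $r = 0$ with denominator $h(r)^2 = (\nu + 2ar^2)^2$; a short calculation gives $|\partial_r w_\nu| \le C\, r / h(r)$ uniformly (the key point being that both the $ar^2$ term in the numerator and the cross terms are controlled by $r^2 \le h(r)$ times a constant). Hence $|\partial_r w_\nu|^2 \le C r^2 / h(r)^2 \le C/h(r)$ since $r^2 \le h(r)/(2a)$, and the same integral as above shows $\int |\partial_r w_\nu|^2 \le C \ln\tfrac1\nu$ as well; combining gives $\int_{F_{12}(d)} |\nabla w_\nu|^2 \le C \ln\tfrac1\nu$.

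For the weighted estimate, note that on $F_{12}(d)$ one has $|x|^2 = r^2 + z^2$ with $|z| \le \nu + ad^2 \le C$ and $r \le d$, so $|x| \le C$; but crucially $|x|^{2\gamma} \ge c\, r^{2\gamma}$ only helps if we want a lower bound — here we want an \emph{upper} bound on $\int |\nabla w_\nu|^2 |x|^{2\gamma}$, so the gain must come from the weight being small near the origin, i.e.\ where the logarithmic divergence is concentrated. Using $|x|^{2\gamma} \le C(r^{2\gamma} + |z|^{2\gamma})$ and $|z| \le h(r)$ on the gap, together with $|\nabla w_\nu|^2 \le C / h(r)$, the contribution of the $r^{2\gamma}$ part is bounded by $\int_0^d \frac{r^{2\gamma} \cdot r}{\nu + 2ar^2}\, dr \le \int_0^d \frac{r^{2\gamma - 1}}{2a}\, dr < \infty$ (uniformly in $\nu$, since we may drop $\nu \ge 0$ in the denominator), while the $|z|^{2\gamma}$ part is handled by integrating $h(r)^{2\gamma} \cdot h(r)^{-2}$ in $z$ over an interval of length $h(r)$, giving $\int_0^d h(r)^{2\gamma - 1} r\, dr$, which is finite and bounded uniformly in $\nu$ for every $\gamma > 0$. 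This yields $\int_{F_{12}(d)} |\nabla w_\nu|^2 |x|^{2\gamma}\,dx \le C_\gamma$.

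The only mild obstacle is bookkeeping in the $\partial_r w_\nu$ estimate — making sure the quotient-rule numerator is genuinely $O(r/h(r))$ and that no term blows up faster — but this is an elementary one-line differentiation, and the essential mechanism (logarithmic divergence of $\int r\,dr/(\nu+2ar^2)$, killed by any positive power of the weight) is exactly the classical Keller bound. I would present the explicit $w_\nu$, do the $z$-derivative integral in full, sketch the $r$-derivative bound, and then dispatch the weighted estimate by the same change of variables.
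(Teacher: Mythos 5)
Your construction and estimates coincide with the paper's proof: the same explicit Keller function $w_\nu=(ar^2+\nu-z)/(2ar^2+\nu)$ extended by $1$ on $P_1$ and $0$ on $P_2$, the same computation $\int_{F_{12}(d)}|\partial_z w_\nu|^2 = 2\pi\int_0^d r\,(2ar^2+\nu)^{-1}dr \le C\ln\tfrac1\nu$, with the radial derivative and the weighted bound handled by the same elementary change of variables (the paper merely says these are ``computed similarly''). The only blemish is the blanket pointwise claim $|\nabla w_\nu|^2\le C/h(r)$, which fails for the $z$-derivative ($|\partial_z w_\nu|^2=h(r)^{-2}$); it is harmless because the integrals you actually write down use the correct $h(r)^{-2}$ for that term (and, for small $\gamma$, the uniform finiteness of $\int_0^d h(r)^{2\gamma-1}r\,dr$ follows from $h(r)\ge 2ar^2$, the same observation you already use).
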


\definecolor{gray1}{gray}{0.85}
\definecolor{gray2}{gray}{0.67}
\definecolor{gray3}{gray}{0.55}
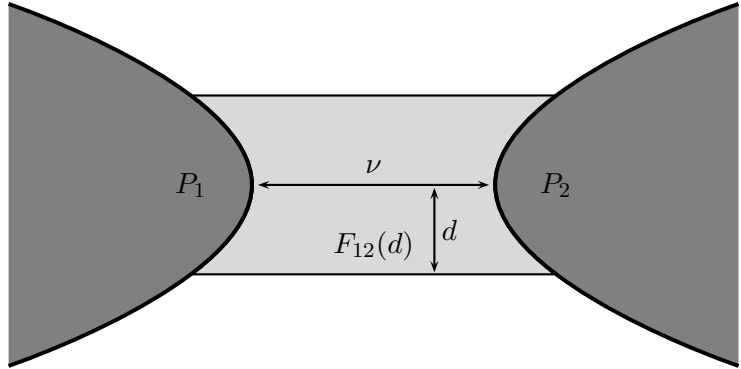
\begin{figure}[h!]
    \centering
    \psset{unit=0.8cm}
\begin{pspicture}(-4,-2.4)(4,3.2)

\begin{psclip}{\psframe[fillstyle=solid,fillcolor=gray1](-3,-1.5) (3,1.5)} 

\pscurve[fillstyle=solid,fillcolor=gray,showpoints=false,linewidth=1.5pt,linecolor=black] (-6,3 ) (-2,0) (-6,-3) 

\pscurve[fillstyle=solid,fillcolor=gray,showpoints=false,linewidth=1.5pt,linecolor=black] (6,3) (2,0) (6,-3) 
\end{psclip}
\pscurve[fillstyle=solid,fillcolor=gray,showpoints=false,linewidth=1.5pt,linecolor=black] (-6,3 ) (-2,0) (-6,-3) 
\pscurve[fillstyle=solid,fillcolor=gray,showpoints=false,linewidth=1.5pt,linecolor=black] (6,3 ) (2,0) (6,-3)

\rput(-3,0){$P_1$}
\rput(3,0){$P_2$}
\rput(0,-1){$F_{12}(d)$}

\psline{<->}(-1.9,0) (1.9,0)
\rput(0,0.3){$\nu$}

\psline{<->}(1,-0.05) (1,-1.45)
\rput(1.25,-0.75){$d$}
\end{pspicture}
    \caption{Geometry of the gap}
    \label{fig:Geometry of the gap}
\end{figure}

\begin{proof}
For $x \in F_{12}(d)$, we set 
$$w_{\nu}(x)= \frac{ar^2+\nu-z}{2 ar^2+\nu}$$
that we extend by $1$ on $P_1$ and $0$ on $P_2$. Clearly, $w_\nu$ is smooth, with values in $[0,1]$. 
As regards the $L^2$ bound on the gradient, it is easily verified that all derivatives are bounded uniformly in $\nu$  in $L^2$ except for  $\partial_z w_\nu$, for which one can compute:

\begin{align*}
\int_{F_{12}(d)} |\partial_z w_\nu|^2  & = 2\pi \int_{0}^d \int_{-ar^2}^{\nu+ar^2} \frac{r}{(2 ar^2 + \nu)^2}  \ \mathrm{d}r \mathrm{d}z\\
& = 2 \pi \int_0^d \frac{r}{(2 ar^2 + \nu)} \ \mathrm{d}r \\
& \le C \ln \frac{1}{\nu} \end{align*}
by direct computation of the integral. The other bounds can be computed similarly.  Note that functions of the type of $w_\nu$ are sometimes referred as Keller functions, cf \cite{keller1964theorem}.
\end{proof}

\section{Poincaré-Wirtinger inequality} \label{appendix_PW}
We explain here how to obtain inequality \eqref{PW} with $r=12$ for inclusions $I$ satisfying (G1). A crucial point, beside the regularity of the boundary, is that the inclusions do not shrink, thanks to the interior ball condition with uniform radius $d$. 

\mspace
Our starting point is the following statement: given an open set $U$ Lipschitz-diffeomorphic to the unit ball, with diffeomorphism $\phi$ satisfying 
$\| \na \phi \|_{L^\infty}  + \|\na \phi^{-1} \|_{L^\infty} \le M$, one has 
$$ \forall p \in [1,\infty), \quad \int_{U \times U} |f(x)- f(y)|^p dx dy \le C_M \int_{U} |\na f(x)|^p dx $$
where $C_M$ depends only on $M$. This can be seen by a direct adaptation of the proof of the Poincaré-Wirtinger inequality for convex domains given in  \cite[chapter 3, page 5]{Mischler}: just take $z_t = \phi^{-1}\big((1-t) \phi(x) + t \phi(y)\big)$ in this proof, instead of $z_t = (1-t) x + t y$.  It follows easily that for all $A,A' \subset U$, with $|A|, |A'| \ge \eta > 0$, 
\begin{equation} \label{PW1}
  \|f - (f)_A\|_{L^p(U)}  \le C \| \na f \|_{L^p(U)},   
 \end{equation} 
and then
 \begin{equation} \label{PW2}
|(f)_A - (f)_{A'}| \le C \| \na f \|_{L^p(U)}. 
 \end{equation} 
where $C$ just depends on $\eta$ (besides $M$).

\mspace
To prove \eqref{PW}, we first introduce a covering  $\mathring{I} = \cup_{k=1}^K O_k$, satisfying the following properties: 
\begin{itemize}
\item[i)]   for each $k$, $O_k$ is an open set  Lipschitz-diffeomorphic to the unit ball, with diffeomorphisms $\phi_k$ s.t. $\| \na \phi_k \|_{L^\infty}  + \|\na \phi_k^{-1} \|_{L^\infty} \le M$ for some $M$ independent of $k$ (and $I$). 
 \item[ii)]  The cardinal $K$ of the covering  is bounded by  $C \, \diam(I)^3$.  
\end{itemize}
There are of course many possible choices to satisfy such properties. As regards i), our assumption (G1) ensures that we can cover  a vicinity of the boundary  by such type of open sets,  while the remaining part of $I$ can be covered directly by balls. As regards ii), if the covering satisfies i) and is uniformly locally finite, meaning: 
$$ \exists M_0 > 0, \quad \forall k, \quad  \sharp \{ k', O_k \cap O_{k'} \neq \emptyset  \} \le M_0   $$
then the cardinal $K$ is comparable to  $| \, I \, |$, hence bounded by $C \, \diam(I)^3$.

\mspace
Then, given this covering, we write 
\begin{align*}
\int_{I} |f - (f)_I|^p \le \frac{1}{| I |} \int_{I \times I} |f(x) - f(y)|^p dx dy & =  \frac{1}{| I |} \sum_{k,k'}  \int_{O_k \times O_{k'}} |f(x) - f(y)|^p dx dy \\
& \le C \, \diam(I)^6 \, \sup_{k,k'} \int_{O_k \times O_{k'}} |f(x) - f(y)|^p dx dy
\end{align*}
where we used property ii) and  the fact that $\frac{1}{|\,I \,|} \ge d^{-3}$, with the constant $d$ in (G1).  
Now, for any fixed couple $k,k'$, we take a sequence $ U_0 = O_k, \: U_1, \dots, \: U_{J} = O_{k'}$ such that 
\begin{itemize}
\item   $U_j$ is a ball for all $j = 1, \dots, J-1$, whose radius is   bounded by some  $R > 0$ uniform in $j$ and  $(k,k')$. 
\item   $ |U_j \cap U_{j+1}| \ge \eta, \quad \forall j=0, \dots, J-1$, for some $\eta$ uniform in $j$ and $k,k'$.  
\item The cardinal $J$ is  bounded by $C' \diam(I)^3$ for some constant uniform in $k,k'$.    
\end{itemize}
We finally write: for all $k,k'$, 
\begin{align*}
& \int_{O_k \times O_{k'}} |f(x) - f(y)|^p dx dy  \\
 = & \int_{O_k \times O_{k'}} | f(x) - (f)_{U_0 \cap U_1} + (f)_{U_0 \cap U_1} - (f)_{U_1 \cap U_2} + \dots + (f)_{U_{J-1} \cap U_J} - f(y)|^p  \\
\le & C J^p \Big(  \int_{O_k}  | f(x) - (f)_{U_0 \cap U_1}|^p dx + \sum_{j=0}^{J-1}  |(f)_{U_j \cap U_{j+1}} - (f)_{U_{j+1} \cap U_{j+2}}|^p +  \int_{O_{k'}}  |(f)_{U_{J-1} \cap U_J} - f(y)|^p dy \Big) \\ 
\le & C' J^p \sum_{j=0}^J  \| \na f \|^p_{L^p(U_j)} \\
 \le & C' J^{p+1}  \| \na f \|^p_{L^p(I)}  \le C'' \diam(I)^{3p+3}  \| \na f \|^p_{L^p(I)}. 
\end{align*}  
 where we used \eqref{PW1}-\eqref{PW2}   for the second inequality.

{\footnotesize
\bibliographystyle{siam}

 \bibliography{biblio_graph}
 }
 
 \end{document}